\newtheorem{theorem}{Theorem}[section]
\newtheorem{remark}[theorem]{Remark}
\newtheorem{definition}{Definition}
\begin{document}

\begin{center}
{\Large \sffamily \bfseries Stretched non-local Pearson diffusions}
\end{center}

\begin{center}
{Luisa Beghin}$^{\textrm{a},}$\footnote{Corresponding author.}, {Nikolai Leonenko}$^{\textrm{b}}$, {Ivan Papić}$^{\textrm{c}}$ and
{Jayme Vaz}$^{\textrm{d}}$ 

\footnotesize{
		$$\begin{tabular}{llll}
$^{a}$Department of Statistical Sciences, Sapienza University, p.le Aldo Moro 5, Rome, 00185, Italy. \\
$^{b}$Cardiff School of Mathematics, Cardiff University, Senghennydd Road,
Cardiff, CF24 4AG, UK.\\
$^{c}$School of Applied Mathematics and Informatics, University of Osijek, Trg Ljudevita Gaja 6,
Osijek, Croatia.\\
$^{d}$Departamento de Matem\'atica Aplicada, 
Universidade Estadual de Campinas, 13087-859, Campinas, SP, Brazil. 
\end{tabular}$$}
\end{center}
\let\thefootnote\relax\footnotetext{E-mail adresses: luisa.beghin@uniroma1.it (Luisa Beghin), LeonenkoN@cardiff.ac.uk (Nikolai Leonenko), ipapic@mathos.hr (Ivan Papić), vaz@unicamp.br (Jayme Vaz).}
\noindent\textbf{Abstract.}
We define a novel class of time-changed Pearson diffusions, termed stretched non-local Pearson diffusions, where the stochastic time-change model has the Kilbas–Saigo function as its Laplace transform. Moreover, we introduce a stretched variant of the Caputo fractional derivative and prove that its eigenfunction is, in fact, the Kilbas–Saigo function. Furthermore, we solve fractional Cauchy problems involving the generator of the Pearson diffusion and the Fokker–Planck operator, providing both analytic and stochastic solutions, which connect the newly defined process and fractional operator with the Kilbas–Saigo function. We also prove that stretched non-local Pearson diffusions share the same limiting distributions as their standard counterparts. Finally, we investigate fractional hyperbolic Cauchy problems for Pearson diffusions, which resemble time-fractional telegraph equations, and provide both analytical and stochastic solutions. As a byproduct of our analysis, we derive a novel representation and an asymptotic formula for the Kilbas–Saigo function $\operatorname{E}_{a,m,l}(z)$ with complex $z$, which, to the best of our knowledge, are not currently available in the existing literature.
\bigskip

\noindent\textbf{Keywords.} Pearson diffusion, Kilbas-Saigo function, Caputo fractional derivative, Fractional Cauchy problem, Fractional diffusion, Hyperbolic diffusion.

\bigskip

\noindent\textbf{Mathematics Subject Classification (2020):} 26A33, 33E12, 35L10, 60G20, 60G22, 60J35, 60J60.

\bigskip
 
\section{Introduction}
Pearson diffusions  \cite{Pearson,Forman,Meerschaert} are time-homogeneous Markov processes $(X_t, \, t \geq 0)$ with transition density $\boldsymbol{p}(x,t;y) = 
\frac{\partial}{\partial x}P(X_t\leq x|X_0 = y)$ characterized by diffusion equations with polynomial coefficients. The function 
$p(x,t) = \int \boldsymbol{p}(x,t;y) f(y)\, dy $ satisfies the 
Fokker-Planck equation (also known as the Kolmogorov forward equation)  
\begin{equation*}
\label{FP.eq}
\frac{\partial p(x,t)}{\partial t} = \mathcal{L}[p] = - \frac{\partial\;}{\partial x} 
[\mu(x)p(x,t)] + \frac{\partial^2\;}{\partial x^2}[D(x) p(x,t)] , 
\end{equation*}
with the diffusion coefficient    
\begin{equation*}
\label{dif.coeff}
D(x) = d_0 + d_1 x + d_2 x^2
\end{equation*}
and the drift 
\begin{equation*}
\mu(x) = a_0 + a_1 x ,
\end{equation*}
and initial condition $p(x,0) = \delta(x-y)$. Here, $\mathcal{L}$ denotes Fokker-Planck operator. On the other 
hand, the function 
\begin{equation}
\label{eq.g.def}
g(y,t) = \int \boldsymbol{p}(x,t;y) g(x)\, dx
\end{equation}
satisfies 
the Kolmogorov backward equation 
\begin{equation}
\label{kolmo}
\frac{\partial g(y,t)}{\partial t} = \mathcal{G}[g] = \mu(y)\frac{\partial g(y,t)}{\partial y} + 
D(y) \frac{\partial^2 g(y,t)}{\partial y^2}   
\end{equation}
with the condition $g(y,0) = g(y)$, where $\mathcal{G}$ is infinitesimal generator of the corresponding diffusion (see e.g. \cite{Karlin}). 

In general, classical diffusion models rely on assumptions of Markovianity that are often violated in complex systems exhibiting long-memory effects and non-local interactions. To address these limitations, fractional calculus has become an increasingly important tool, introducing non-local temporal dynamics through derivatives of non-integer order (for recent developments on this topic see e.g. \cite{abdelhamid2023existence}, \cite{ayad2024nonlinear}, \cite{dhanalakshmi2023exponential}, \cite{gorska2023subordination}, \cite{tuan2021time}).

In \cite{Pearson}, the authors explore specific fractional diffusion model, so called Fractional Pearson diffusions. The latter involve diffusion equations governed by a time-fractional diffusion equation. Specifically, in place of \eqref{kolmo}, 
they investigated the equation 
\begin{equation}
\label{frac.kolmo}
{\sideset{_{\scriptscriptstyle C}^{}}{_t^{(\alpha)}}{\operatorname{\mathcal{D}}}} g = 
\mathcal{G}[g] ,
\end{equation}
where ${\sideset{_{\scriptscriptstyle C}^{}}{_t^{(\alpha)}}{\operatorname{\mathcal{D}}}}$ 
is the Caputo fractional derivative of order $\alpha$ (see e.g. \cite{podlubny1998fractional}) and 
$\mathcal{G}[g]$ is as in \eqref{kolmo}. 
This leads to solutions expressible in terms of Mittag-Leffler functions (\cite{MittagLeffler}), and corresponds to time-changing the original diffusion by the inverse of a stable subordinator.

In this work, we extend the fractional Pearson diffusion framework by introducing a new class of stochastic processes, termed stretched non-local Pearson diffusions, governed by a fractional differential operator of the form
$$\frac{1}{t^{\gamma}}{\sideset{_{\scriptscriptstyle C}^{}}{_t^{(\alpha)}}{\operatorname{\mathcal{D}}}},$$
where $\gamma$ is a stretching parameter. This operator, a time-stretched variant of the Caputo derivative, leads to time evolutions involving Kilbas–Saigo function (\cite{Simon}), which generalize the Mittag-Leffler function and exhibit a broader range of decay behaviors and asymptotics. Moreover, the stochastic solution is time-changed Pearson diffusion where time-change model is more general than the inverse subordinator.

Conversely, hyperbolic diffusion equations play a crucial role when there is a necessity to establish a bounded propagation speed for diffusion phenomena. An example of such an equation is the Cattaneo equation \cite{Cattaneo}, which exhibits formal resemblance to the linear telegraph equation (\cite{Telegraph_eq}). Given this, it's natural to extend the results of \cite{Pearson} by incorporating the realm of hyperbolic diffusions.
This broader perspective allows us to explore a wider range of possibilities and mathematical techniques. Incorporating fractional time derivatives (or, more generally, non-local temporal operators) into telegraph equations often results in significantly improved model fitting to empirical data compared to classical telegraph-type time operators (see \cite{madhukar2019heat}, \cite{ran2024heat} and \cite{ran2022shock}).

Structure of the paper is as follow. In Section \ref{KS.section} we define new fractional operator with stretching parameter $\gamma$ which generalizes Caputo fractional derivative. We show that Kilbas-Saigo (KS) function is eigenfunction for this newly defined operator. We explore bounds and asymptotic behaviour for KS function for complex arguments. Moreover, we consider a second-order equation involving this fractional operator which resembles time-part of telegraph equation and calculate its solution in terms of Kilbas-Saigo function.  
Section \ref{PearsonDiffusions} contains overview of Pearson diffusions classification together with their spectral properties which we use in the follow up sections.
In Section \ref{sec_fpd} we provide analytical solutions for fractional Cauchy problems with stretching involving Pearson diffusions in terms of Kilbas-Saigo function.
In Section \ref{sec_fhpd} we consider general fractional Cauchy problems with stretching, where time-part resembles fractional telegraph equation, while space part is driven by Pearson diffusion infinitesimal parameters. Analytical solutions are written in terms of Kilbas-Saigo function with complex arguments.
Finally, Section \ref{Stoch_rep_sec} explores stochastic representation results of analytical solutions of previous two sections.
Appendix A contains overview of properties of double gamma function which we extensively use in Section \ref{KS.section}.

\section{The Kilbas-Saigo function and the differential operator $\boldsymbol{\mathcal{D}^{(\alpha,\gamma)}_t}$ }  \label{KS.section}

\begin{definition}
	We define the non-local differential operator $\mathcal{D}^{(\alpha,\gamma)}_t $ as
	\begin{equation}
		\label{Caputo_variation}
		\mathcal{D}^{(\alpha,\gamma)}_t f(t) := 
		\frac{t^{-\gamma}}{\Gamma(1-\alpha)}\int_0^t \frac{f^\prime(\tau)}{\left(t-\tau\right)^{\alpha}}\, d\tau , =	t^{-\gamma}
		\sideset{_{\scriptscriptstyle C}^{}}{_t^{(\alpha)}}{\operatorname{\mathcal{D}}},
	\end{equation}
	where $0 < \alpha <1$ is order of the Caputo fractional derivative $\sideset{_{\scriptscriptstyle C}^{}}{_t^{(\alpha)}}{\operatorname{\mathcal{D}}}$ and $\gamma$ is an arbitrary nonnegative real number, representing stretching parameter of time argument.
\end{definition}

\begin{remark}
	In what follows we will extensively use these Caputo-type derivatives of power law functions, so let us recall that 
	\begin{equation*}
		{\sideset{_{\scriptscriptstyle C}^{}}{_t^{(\alpha)}}{\operatorname{\mathcal{D}}}} t^\beta  = 
		\begin{cases}
			{\displaystyle \frac{\Gamma(\beta+1)}{\Gamma(\beta-\alpha+1)} t^{\beta-\alpha}} , \; & \beta \neq 0 , \\[1ex]
			0 , & \beta = 0 ,
		\end{cases}
	\end{equation*}
	and then 
	\begin{equation*}
		\operatorname{\mathcal{D}}^{(\alpha,\gamma)}_t  \, t^\beta 
		= 
		\begin{cases}
			{\displaystyle \frac{\Gamma(\beta+1)}{\Gamma(\beta-\alpha+1)} t^{\beta-(\alpha+\gamma)}} , \; & \beta \neq 0 , \\[1ex]
			0 , & \beta = 0 .
		\end{cases}
	\end{equation*}
\end{remark}

The Kilbas-Saigo function (see e.g. \cite{Simon}), denoted by $\operatorname{E}_{a,m,l}(z)$, is defined as 
\begin{equation*}
	\label{def.KS.function}
	\operatorname{E}_{a,m,l}(z) = \sum_{n=0}^\infty c_n z^n ,  
\end{equation*}
with 
\begin{equation}
	\label{def.coeff.KS.function}
	c_0 = 1 , \qquad c_n = \prod_{k=0}^{n-1} \frac{\Gamma[1+a(km+l)]}{\Gamma[1+a(km+l+1)]} , 
\end{equation}
and where the parameters are such that $a, m > 0$ and $l > -1/a$.
The KS function $\operatorname{E}_{a,m,m-1}(-x)$ is monotonically decreasing for $x \geq 0$ and satisfies \cite{Simon} 
\begin{equation} \label{KS_function_bounds}
\frac{1}{1+\Gamma(1-a)x} \leq \operatorname{E}_{a,m,m-1}(-x)\leq \frac{1}{1+  \displaystyle {\frac{\Gamma(1+a(m-1))}{\Gamma(1+am)}} x} ,
\end{equation}
where $x \geq 0$ and $a \in [0,1]$. 
Moreover, the following asymptotic representation is valid:
\begin{equation} \label{KS_realz_asympt}
	\operatorname{E}_{a,m,l}(-x) \sim  \frac{\Gamma(1+a(l+1-m))}{\Gamma(1+a(l-m))x}, \quad \text{ as } x \to \infty,
\end{equation}
where $a \in \langle 0, 1\rangle, m>0$, $l > m-1/a$. \\

In particular, for $a \in \langle 0, 1 \rangle$ and $m>0$:
\begin{align}
1-\operatorname{E}_{a,m,m-1}(-x)& \sim  \frac{\Gamma(1+a(m-1))}{\Gamma(1+am)x}, \quad \text{ as } x \to 0, \\  \label{KS_realz_asympt_spec}\operatorname{E}_{a,m,m-1}(-x) &\sim  \frac{1}{\Gamma(1-a)x}, \quad \text{ as } x \to \infty.
\end{align}

However, it turns out that our analysis will require similar asymptotics but for complex argument $x$ with positive real part. To the best of our knowledge such results are not available in literature, and hopefully our results covers this gap.

In what follows we will make an extensive use of the double gamma function, since we will rewrite the Kilbas-Saigo function using it and then use results from the double gamma function to obtain asymptotic results for the Kilbas-Saigo function. The double gamma function is generalization of the gamma function \cite{Genesis}, but since it is not as well known as other functions, it is beneficial to recall its definition and main properties (see \ref{Appen}).

In fact, KS function can be rewritten in terms of double gamma function. We can rewrite the $c_k$ coefficients in \eqref{def.coeff.KS.function} using \eqref{ap.B.general.G.tau.n}
in order to obtain: 
\begin{equation*}
	\label{coeff.KS.G.pochhammer}
	c_k = \frac{G(\varphi + a\tau;\tau)}{G(\varphi ;\tau)} \, \frac{G(\varphi   + k;\tau)}{G(\varphi + a\tau+ k;\tau)} , 
\end{equation*}
where 
\begin{equation}
	\label{def.varphi}
	\varphi = (1+al)\tau 
\end{equation}
and 
\begin{equation*}
	\label{exp.tau}
	\tau = \frac{1}{am}  
\end{equation*} 
for $k=1,2,\ldots$. 
Note that this expression also works for $k=0$. 
Thus we can write 
\begin{equation}
	\label{def.KS.G.pochhammer}
	\operatorname{E}_{a,m,l}(z) = \frac{G(\varphi + a\tau;\tau)}{G(\varphi ;\tau)} \sum_{n=0}^\infty  \frac{G(\varphi   + n;\tau)}{G(\varphi + a\tau+ n;\tau)} z^n . 
\end{equation}
Next theorem provides Mellin-Barnes type representation for KS function which is needed for the following asymptotic result.
\begin{theorem}
Let $z \in \mathbb{C}$ be such that $\operatorname{Re}(z)>0$. Then the following formula is valid and integral on the right-hand side is convergent for any such $z$:
\begin{equation}
	\label{M-B.KS.1}
\operatorname{E}_{a,m,l}(-z)=	\frac{1}{2\pi i}\frac{G(\varphi + a\tau;\tau)}{G(\varphi ;\tau)}   \int_{C} \frac{\Gamma(s)\Gamma(1-s)G(\varphi -s ;\tau)}{G(\varphi+a\tau -s;\tau)} z^{-s} \, ds ,
\end{equation}
with $\tau = 1/(am) > 0$. 
\end{theorem}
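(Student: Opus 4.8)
The plan is to establish the Mellin--Barnes representation by recognizing the series in \eqref{def.KS.G.pochhammer} as a function whose Mellin transform can be read off, and then invert. First I would examine the summand $\frac{G(\varphi+n;\tau)}{G(\varphi+a\tau+n;\tau)}z^n$ and ask what classical hypergeometric-type Mellin--Barnes integral reproduces this series upon closing the contour $C$ to the right and summing residues. The integrand in \eqref{M-B.KS.1} contains the factor $\Gamma(s)\Gamma(1-s)$, whose poles at $s=0,-1,-2,\dots$ (from $\Gamma(s)$) carry residues $\frac{(-1)^n}{n!}$, while $\Gamma(1-s)$ evaluated at $s=-n$ gives $\Gamma(1+n)=n!$; multiplying these cancels the factorials and leaves $(-1)^n$. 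Combined with the ratio $\frac{G(\varphi+n;\tau)}{G(\varphi+a\tau+n;\tau)}$ obtained from $\frac{G(\varphi-s;\tau)}{G(\varphi+a\tau-s;\tau)}$ at $s=-n$, and the $z^{-s}=z^{n}$ factor, this should reproduce exactly $c_n(-z)^n$, i.e.\ the evaluation at $-z$. So the skeleton is: close $C$ to the right, apply the residue theorem, and match term by term against \eqref{def.KS.G.pochhammer}.

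Concretely, I would take $C$ to be a vertical contour $\operatorname{Re}(s)=c$ with $0<c<1$ (so it separates the poles of $\Gamma(s)$ at $s\le 0$ from those of $\Gamma(1-s)$ at $s\ge 1$), deformed if necessary to avoid poles of $G(\varphi-s;\tau)$ in the right half-plane. I would then consider the semicircular contour closing to the right and show the arc contribution vanishes as the radius tends to infinity, so that the integral equals $-2\pi i$ times the sum of residues enclosed (with the sign coming from clockwise orientation). Evaluating the residue at each $s=-n$, using $\operatorname{Res}_{s=-n}\Gamma(s)=\frac{(-1)^n}{n!}$ and the value $\Gamma(1+n)$ of the $\Gamma(1-s)$ factor, I would confirm that the prefactor $\frac{G(\varphi+a\tau;\tau)}{G(\varphi;\tau)}$ times the residue sum collapses precisely to $\sum_n c_n(-z)^n=\operatorname{E}_{a,m,l}(-z)$.

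The two technical pillars are convergence and the vanishing of the arc. For convergence of the integral along $C$, I would invoke the asymptotic behaviour of the ratio $\frac{G(\varphi-s;\tau)}{G(\varphi+a\tau-s;\tau)}$ as $|\operatorname{Im}(s)|\to\infty$, drawn from the double-gamma properties collected in Appendix \ref{Appen}; combined with the exponential decay of $\Gamma(s)\Gamma(1-s)=\frac{\pi}{\sin(\pi s)}$ along vertical lines, this should guarantee absolute convergence whenever $\operatorname{Re}(z)>0$, since $|z^{-s}|=|z|^{-c}e^{\operatorname{arg}(z)\operatorname{Im}(s)}$ stays controlled precisely when $|\operatorname{arg}(z)|<\pi/2$. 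The condition $\operatorname{Re}(z)>0$ is exactly what bounds $|\operatorname{arg}(z)|$ away from $\pm\pi/2$ enough for the $\sin(\pi s)$ decay to dominate the growth from $z^{-s}$.

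The main obstacle I anticipate is controlling the asymptotics of the double-gamma ratio in the right half-plane $\operatorname{Re}(s)\to+\infty$ uniformly enough to kill the closing arc, since $G(\cdot;\tau)$ has a more intricate growth profile than the ordinary gamma function and its ratio does not simplify to an elementary expression. I would handle this by expressing the ratio through the asymptotic expansion of $\log G$ recalled in the appendix, showing the exponential factors combine with $z^{-s}$ favourably under $\operatorname{Re}(z)>0$; verifying that no poles of $G(\varphi-s;\tau)$ obstruct the rightward deformation, or accounting for them if present, is the delicate bookkeeping step. Once the arc estimate and the pole structure are settled, the residue computation itself is routine and the identification with \eqref{def.KS.G.pochhammer} is immediate.
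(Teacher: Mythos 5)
Your residue bookkeeping at $s=-n$ is correct, but the closing direction is not, and this is a genuine error rather than a slip of terminology. The poles you evaluate --- $s=-n$, with residue $(-1)^n$ from $\Gamma(s)\Gamma(1-s)$, the value $G(\varphi+n;\tau)/G(\varphi+a\tau+n;\tau)$ from the double-gamma ratio, and $z^{-s}\big|_{s=-n}=z^n$ assembling into $c_n(-z)^n$ --- all lie in $\operatorname{Re}(s)\le 0$, i.e.\ to the \emph{left} of the vertical contour $C$ at $\operatorname{Re}(s)=c>0$. Closing to the right, as you propose twice, encloses instead the poles of $\Gamma(1-s)$ at $s=1,2,\ldots$ and the zeros of $G(\varphi+a\tau-s;\tau)$ at $s=\varphi+a\tau+\mu\tau+\lambda$, whose residues generate an expansion in negative powers of $z$ (this is exactly what the paper exploits to prove Theorem \ref{KS_asymtptocis_complex_z}), not the Taylor series \eqref{def.KS.G.pochhammer}. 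Worse, the obstacle you flag --- controlling the double-gamma ratio as $\operatorname{Re}(s)\to+\infty$ --- is fatal to a right-closing arc, not merely delicate: by the expansion \eqref{exp.log.integrand} the integrand satisfies $\log|\mathcal{I}(s)| = a\cos\theta\, R\log R + \mathcal{O}(R)$, so on any right arc ($\cos\theta>0$) it grows superexponentially and the arc contribution diverges. The paper makes this point explicitly at the start of the proof of Theorem \ref{KS_asymtptocis_complex_z}, and circumvents it there only by stopping at a finite abscissa $\delta\in(1,3/2]$, which is why that result is an asymptotic formula with remainder $\mathcal{O}(|z|^{-\delta})$ rather than a convergent expansion. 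The paper's proof of \eqref{M-B.KS.1} closes to the \emph{left}, where $\cos\theta<0$ makes the same leading term $a\cos\theta\, R\log R \to -\infty$ annihilate the arc $C_N$; the residue sum over $s=-n$ --- your computation, correctly placed --- then reproduces \eqref{def.KS.G.pochhammer}.

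A secondary imprecision concerns convergence along the vertical line itself. You attribute it entirely to the decay of $\Gamma(s)\Gamma(1-s)=\pi/\sin(\pi s)$ against $|z^{-s}|$ and conclude that $|\operatorname{arg}(z)|<\pi/2$ is ``exactly'' the required condition. In fact the double-gamma ratio is not negligible along vertical rays: at $\theta=\pm\pi/2$ it contributes an additional exponential growth of order $e^{a\pi R/2}$ (the term $-a(\theta\sin\theta-\pi\sin|\theta|)$ in $K_1(\tau,a)$), so the correct sector of convergence is $|\operatorname{arg}(z)|<\left(1-\tfrac{a}{2}\right)\pi$, which contains the half-plane $\operatorname{Re}(z)>0$ for every $0<a<1$ but does not coincide with it. This does not threaten the statement, but an estimate that treats the $G$-ratio as bounded along vertical lines would not survive the careful accounting the proof actually requires.
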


\begin{proof}
Due to the gamma functions in the numerator, the integrand has poles at $s=-n$ ($n=0,1,2,\ldots$) from $\Gamma(s)$ and at $s = n+1$ ($n=0,1,2,\ldots$) from $\Gamma(1-s)$. Since the double gamma function is entire, the other poles of the integrand come from the zeros of $G(\varphi+a\tau -s;\tau)$ 
in the denominator.  The zeros from $G(\varphi+a\tau -s;\tau)$ are located at $s = \varphi + a\tau + \mu\tau + \lambda$ ($\lambda,\mu=0,1,2,\ldots$). We will choose the contour of integration $C$ such that it separates the poles of $\Gamma(s)$  from the poles of $\Gamma(1-s)$ and $1/G(\varphi+a\tau -s;\tau)$. Thus $C = (c-i\infty,c+i\infty)$ where $0 < c < \varepsilon$ where $\varepsilon =\operatorname{min}[1,(1+a(l+1))\tau]$.

In order to analyze the convergence of this integral, we will consider 
the limit $N \to \infty$ for the integral along $(c-i(N+1/2),c+i(N+1/2))$ 
(the dashed line segment in Figure~\ref{fig.2}). 
However, since the integrand is a holomorphic function in the 
entire plane except at the poles described above, as usual we deform the line segment  
$(c-i(N~+~1/2), c+i(N+1/2))$  into the contour $C_N^- \, \cup \,  C_\epsilon  \, \cup \,  C_N^+ $, where 
$C_N^- = (-i(N~+~1/2),-\epsilon)$, $C_N^+ = (\epsilon,i(N+1/2))$ and 
$C_\epsilon$ is the arc $\{s \in \mathbb{C}\,|\,
|s| = \epsilon, -\pi/2 \leq \operatorname{arg}(s) \leq \pi/2\}$ 
encircling the pole $s=0$ from the right (see Figure~\ref{fig.2}). 

\begin{figure}[h]
	\begin{center}
		\begin{tikzpicture}[scale=0.8]
			\draw[->] (-6,0) -- (2,0) node[below] {${\scriptscriptstyle \operatorname{Re}s}$};
			\draw[->] (0,-4.7) -- (0,4.8) node[right] {${\scriptscriptstyle \operatorname{Im}s}$};
			\draw[orange,fill=orange] (0,0) circle (1.2pt);
			\draw[orange,fill=orange] (-1,0) circle (1.2pt);
			\draw[orange,fill=orange] (-2,0) circle (1.2pt);
			\draw[orange,fill=orange] (-4,0) circle (1.2pt);
			\draw[orange,fill=orange] (-5,0) circle (1.2pt);
			\node[below] at (1,0) {${\scriptscriptstyle \varepsilon}$};
			\node[below] at (-1,0) {${\scriptscriptstyle -1}$};
			\node[below left] at (0,0) {${\scriptscriptstyle 0}$};
			\node[below] at (-2,0) {${\scriptscriptstyle -2}$};
			\node[below] at (-3,0) {$\cdots$};
			\node[below] at (-4,0) {${\scriptscriptstyle -N}$};
			\node[below] at (-5.2,0) {${\scriptscriptstyle -(N+1)}$};
			\draw[cyan,very thick] (0,0.5) -- (0,4.5);
			\draw[very thick,dashed] (0.8,-4.5) -- (0.8,4.5);
			\draw[cyan,very thick] (0,-0.5) -- (0,-4.5);
			\draw[->,>=triangle 45,cyan,very thick] (0,1) -- (0,1.9);
			\draw[cyan, very thick] (0,4.5) arc (90:270:4.5);
			\draw[cyan, very thick,->,>=triangle 45] (0,4.5) arc (90:160:4.5);
			\draw[cyan, very thick] (0,-0.5) arc (-90:90:0.5);
			\node[below] at (0.5,-0.4) {\footnotesize ${\textcolor{cyan}{C_\epsilon }}$};
			\node[left] at (0,3) {\footnotesize ${\textcolor{cyan}{C_N^+ }}$};
			\node[left] at (0,-3) {\footnotesize ${\textcolor{cyan}{C_N^-}}$};
			\node[right] at (0.8,-4.5) {$c-i(N+1/2)$};
			\node[right] at (0.8,4.5) {$c+i(N+1/2)$};
			\node[below, cyan] at (-5,2.6) {$C_N$};
			\draw[->] (0,0) -- (-3.15,3.15);
			\node[right] at (-2,2) {${\scriptscriptstyle N+1/2}$};
		\end{tikzpicture}
	\end{center}
	\caption{Integration contour for \eqref{M-B.KS.1}.\label{fig.2}}
\end{figure}
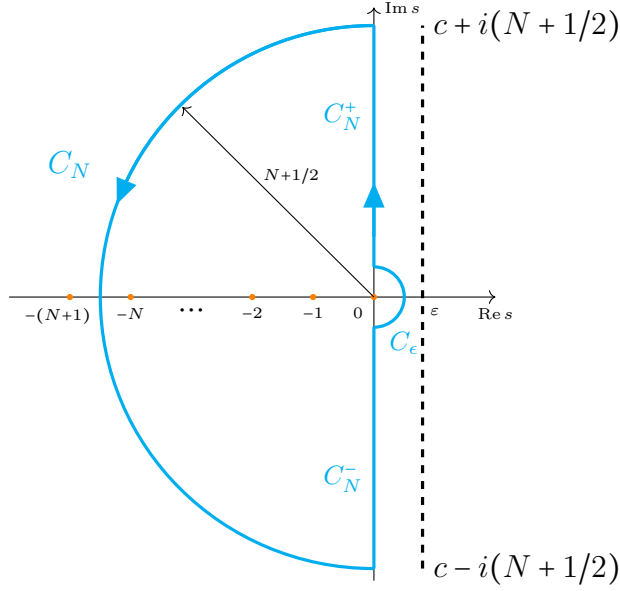

Since the integral along the arc $C_\epsilon$ vanishes for $\epsilon \to 0$, 
we only need to analyse the integrals along $C_N^-$ and $C_N^+$. We can 
write $s \in C_N^\pm$ as $s = R\operatorname{e}^{i\theta}$ with 
$\epsilon \leq R \leq (N+1/2)$ and $\theta = \pi/2$ for $s \in C_N^+$ and
$\theta = -\pi/2$ for $s \in C_N^-$. Let us analyze 
\begin{equation*}
	\label{log.integrand}
	\log |\mathcal{I}(s)| = 
	\log\left|
	\frac{\Gamma(s)\Gamma(1-s)G(\varphi -s;\tau)}{G(\varphi+a\tau -s;\tau)} z^{-s} \right| . 
\end{equation*}
We have  
\begin{equation*}
	\log|z^{-s}| = -R\cos\theta \log|z| + 
	R\sin\theta \operatorname{arg}(z) , 
\end{equation*}
from Stirling formula, 
\begin{gather*}
	\log|\Gamma(s)| = \left(R\cos\theta -\frac{1}{2}\right) \log{R} - 
	R(\theta\sin\theta + \cos\theta) + \mathcal{O}(1)  , \\
	\log|\Gamma(1-s)| = -\left(R\cos\theta -\frac{1}{2}\right) \log{R} + 
	R(\theta\sin\theta - \pi \sin|\theta| + \cos\theta) + \mathcal{O}(1) , 
\end{gather*}
and from \eqref{ap.B.Stirling.G.tau}, 
\begin{equation*}
	\begin{split}
		& \log|G(1+c-s;\tau)| =   \left(A_2(\tau,c)\cos{2\theta}\right) R^2\log{R} - \left(A_1(\tau,c)\cos\theta\right) R\log{R} \\ 
		& \qquad  +A_0(\tau,c) \log{R} + \left[B_2(\tau,c)\cos{2\theta} - 
		A_2(\tau,c)\left( \theta\sin{2\theta} -\pi \sin{2|\theta|}\right)\right] R^2 \\
		& \qquad  -\left[B_1(\tau,c)\cos\theta-A_1(\tau,c)\left( \theta\sin\theta -\pi \sin|\theta|\right)\right] R 
		+ \mathcal{O}(1) , 
	\end{split}
\end{equation*}
where 
\begin{equation*}
	\begin{split}
		& A_2(\tau,c) = a_2(\tau) , \\
		& A_1(\tau,c) = 2(1+c)a_2(\tau) + a_1(\tau), \\
		& A_0(\tau,c) = (1+c)^2 a_2(\tau) + (1+c) a_1(\tau) + a_0(\tau) , \\
		& B_2(\tau,c) = b_2(\tau) , \\
		& B_1(\tau,c) = (1+c)a_2(\tau) + 2(1+c) b_2(\tau) + b_1(\tau) .
	\end{split}
\end{equation*}
Using these expressions in \eqref{log.integrand} we obtain 
\begin{equation}
	\label{exp.log.integrand}
	\log\left|\mathcal{I}(s) \right|  
	= K_2(\tau,a) \, R\log{R} + K_1(\tau,a) \, R + K_0(\tau,a) \, \log{R} + \mathcal{O}(1) ,
\end{equation}
where
\begin{equation*}
	\begin{aligned}
		& K_2(\tau,a) = a \cos\theta, \\
		& \begin{aligned} K_1(\tau,a) = & -\cos\theta\, \log|z| + \sin\theta\, \operatorname{arg}z - \pi \sin|\theta| \\
			& -a(1+\log\tau) \cos\theta - a \left(\theta\sin\theta - \pi \sin|\theta|\right) ,
		\end{aligned} \\
		& K_0(\tau,a) = \frac{a}{2}[1-\tau\left( 1+a(2l+1)\right)] .
	\end{aligned}
\end{equation*}

Let us see what happens with the integrals along $C_N^+$ and $C_N^-$. 
Along $C_N^+$ we have $\theta = \pi/2$, and so  
\begin{equation*}
	\log\left|\mathcal{I}(s) \right| \big|_{C_N^+} = \left[\operatorname{arg}z -\pi - a\left(\frac{\pi}{2} - \pi\right)\right] R + \mathcal{O}(\log{R}) .
\end{equation*}
Thus if 
\begin{equation*}
	\operatorname{arg}z < \left(1 - \frac{a}{2}\right)\pi
\end{equation*}
the integral along $C_N^+$ decays exponentially as $N \to \infty$.  
On the other hand, along $C_N^-$ we have $\theta = -\pi/2$, and so 
\begin{equation*}
	\log\left|\mathcal{I}(s) \right| \big|_{C_N^-} = \left[-\operatorname{arg}z -\pi - a\left(\frac{\pi}{2} - \pi\right)\right] R + \mathcal{O}(\log{R}) .
\end{equation*}
Thus if 
\begin{equation*}
	\operatorname{arg}z > -\left(1 - \frac{a}{2}\right)\pi
\end{equation*}
the integral along $C_N^-$ decays exponentially as $N \to \infty$.
Consequently, the integral along $C$ converges in the sector 
\begin{equation*}
	|\operatorname{arg}z| < \left(1 - \frac{a}{2}\right)\pi .
\end{equation*}
Since $0 < a < 1$, we conclude that integral in \eqref{M-B.KS.1} converges for $	\operatorname{Re} z > 0.$ 

Next, we prove the equality in \eqref{M-B.KS.1}.
We will consider the closed contour $C_N^- \, \cup \,  C_\epsilon  \, \cup \,  C_N^+ \, \cup \,  C_N$, where $C_N$ is the arc of radius $N + 1/2$ on the left half-plane illustrated in 
Figure~\ref{fig.2}. For $s \in C_N$ we have $\cos\theta < 0$ and for 
the leading term in \eqref{exp.log.integrand} we have  
$$
a \cos\theta R \log{R} < 0 , 
$$
which implies that  
$$
\lim_{N\to \infty} \int_{C_N} \frac{\Gamma(s)\Gamma(1-s)G(\varphi  -s;\tau)}{G(\varphi+a\tau -s;\tau)} z^{-s} ds = 0 . 
$$
Thus, from the residue theorem, 
\begin{equation*}
	\begin{aligned}
		f(z) & =  \frac{G(\varphi+a\tau;\tau)}{G(\varphi;\tau)} \sum_{n=0}^\infty \underset{s=-n}{\operatorname{Res}} \left[\frac{\Gamma(s)\Gamma(1-s)G(\varphi  -s;\tau)}{G(\varphi+a\tau -s;\tau)} z^{-s} , \right] 
	\end{aligned}
\end{equation*}
that is, 
\begin{equation*}
	\begin{aligned}
		f(z) & =    \frac{G(\varphi+a\tau  ;\tau)}{G(\varphi+a\tau -s;\tau)} \sum_{n=0}^\infty \frac{G(\varphi+ n;\tau)}{G(\varphi+a\tau +n;\tau)} (-z)^n  , 
	\end{aligned} 
\end{equation*}
which is the representation of $\operatorname{E}_{a,m,l}(-z)$ as in \eqref{def.KS.G.pochhammer}. 

In conclusion, the Kilbas-Saigo function can be written as 
\begin{equation*}
	\operatorname{E}_{a,m,l}(-z) =  \frac{1}{2\pi i}\frac{G(\varphi+a\tau  ;\tau)}{G(\varphi;\tau)} \int_{C} \frac{\Gamma(s)\Gamma(1-s)G(\varphi  -s;\tau)}{G(\varphi+a\tau -s;\tau)} z^{-s} \, ds 
\end{equation*}
with  	$\operatorname{Re}z > 0 . $
\end{proof}

\begin{theorem} \label{KS_asymtptocis_complex_z}
For $z \in \mathbb{C}$ such that $\operatorname{Re}(z)>0$ the following asymptotic formula is valid:
\begin{equation*}
\operatorname{E}_{a,m,l}(-z)=\frac{\Gamma(1+a(l-m+1))}{\Gamma(1+a(l-m))}z^{-1}+\mathcal{O}(|z|^{-\delta}), \quad \text{ as } |z| \to \infty,
\end{equation*}
where $0 <a <1$, $m>0$, $l>m-1/a$ and $1<\delta \leq 3/2$.
\end{theorem}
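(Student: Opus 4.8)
The plan is to read off the large-$|z|$ behaviour directly from the Mellin--Barnes representation \eqref{M-B.KS.1} by shifting the contour $C$ to the right and collecting residues, exactly as one obtains the classical Mittag--Leffler asymptotics. Since $|z^{-s}| = |z|^{-\operatorname{Re}(s)}\operatorname{e}^{\operatorname{Im}(s)\operatorname{arg}(z)}$, a residue located at $s=s_0$ contributes a term of order $|z|^{-\operatorname{Re}(s_0)}$, so the dominant contribution as $|z|\to\infty$ comes from the pole of smallest real part lying to the right of $C$.

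First I would locate that pole. The contour sits at $0<c<\varepsilon=\operatorname{min}[1,(1+a(l+1))\tau]$, and, as recorded in the previous proof, the poles to the right of $C$ are $s=n+1$ ($n=0,1,2,\dots$) from $\Gamma(1-s)$ together with the zeros $s=\varphi+a\tau+\mu\tau+\lambda$ ($\lambda,\mu=0,1,2,\dots$) of the denominator. Writing $\varphi+a\tau=(1+a(l+1))\tau=\tau+l/m+1/m$ and using the hypothesis $l>m-1/a$ (equivalently $l/m>1-\tau$) one gets $\varphi+a\tau>1+1/m>1$, so the pole of smallest real part to the right of $C$ is the simple pole at $s=1$, while the next pole lies at $\operatorname{min}[2,\varphi+a\tau]>1$. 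I would therefore fix $\delta$ with $1<\delta\le 3/2$ and $\delta<\operatorname{min}[2,\varphi+a\tau]$ and push $C$ to the line $\operatorname{Re}(s)=\delta$, crossing only the pole at $s=1$.

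Next I would evaluate that residue. At $s=1$ only $\Gamma(1-s)$ is singular, with $\operatorname{Res}_{s=1}\Gamma(1-s)=-1$, so the residue of the integrand is $-\,G(\varphi-1;\tau)/G(\varphi+a\tau-1;\tau)\,z^{-1}$; combined with the prefactor $G(\varphi+a\tau;\tau)/G(\varphi;\tau)$ and the sign produced by moving the contour rightward, the leading term is $\tfrac{G(\varphi+a\tau;\tau)}{G(\varphi;\tau)}\tfrac{G(\varphi-1;\tau)}{G(\varphi+a\tau-1;\tau)}z^{-1}$. The key observation is that this double-gamma combination is exactly the coefficient $c_k$ of \eqref{coeff.KS.G.pochhammer} evaluated at $k=-1$; reading the recursion $c_n/c_{n-1}=\Gamma[1+a((n-1)m+l)]/\Gamma[1+a((n-1)m+l+1)]$ implied by \eqref{def.coeff.KS.function} at $n=0$ gives $c_{-1}=\Gamma(1+a(l-m+1))/\Gamma(1+a(l-m))$, the announced leading coefficient. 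This algebraic identification is the cleanest step of the argument.

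The remaining, and principal, difficulty is to show that the shifted integral is $\mathcal{O}(|z|^{-\delta})$ uniformly as $|z|\to\infty$ in the half-plane $\operatorname{Re}(z)>0$. On the line $s=\delta+it$ one has $|z^{-s}|=|z|^{-\delta}\operatorname{e}^{t\operatorname{arg}(z)}$, so after factoring out $|z|^{-\delta}$ it suffices to bound $\int_{-\infty}^{\infty}|\mathcal{I}(\delta+it)|\operatorname{e}^{t\operatorname{arg}(z)}\,dt$ uniformly for $|\operatorname{arg}(z)|\le\pi/2$. I would reuse the Stirling-type estimate \eqref{exp.log.integrand}: since $R\cos\theta=\delta$ stays bounded along this line, the $R\log R$ term reduces to the polynomial factor $R^{a\delta}$, while the linear term produces the exponential $\operatorname{e}^{(|\operatorname{arg}(z)|-(1-a/2)\pi)|t|}$ as $|t|\to\infty$; because $|\operatorname{arg}(z)|\le\pi/2<(1-\tfrac{a}{2})\pi$ for $0<a<1$, this decay dominates, the integral converges, and its value is continuous in $\operatorname{arg}(z)$, hence uniformly bounded on the compact range $[-\pi/2,\pi/2]$. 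The same estimates make the two horizontal sides of the shifting rectangle vanish as $\operatorname{Im}(s)\to\pm\infty$, justifying the contour move. Assembling the residue with this remainder bound yields the claim; the hard part is precisely the uniformity of the exponential-decay estimate up to the boundary rays $\operatorname{arg}(z)=\pm\pi/2$.
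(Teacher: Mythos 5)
Your proposal is correct and follows the same skeleton as the paper's proof: starting from the Mellin--Barnes representation \eqref{M-B.KS.1}, push the contour to the vertical line $\operatorname{Re}(s)=\delta$, with $\delta$ strictly between the pole at $s=1$ and the next singularity $\min(\varphi+a\tau,2)$ (the paper fixes the midpoint, eq.~\eqref{def.asympt.delta}), pick up the residue at $s=1$, and bound the remaining line integral by a constant times $|z|^{-\delta}$. Your residue computation agrees with the paper's: the identification of the leading coefficient as ``$c_{-1}$'' is exactly the paper's reduction of the double-gamma ratio in \eqref{asymptotic.aux} via the functional equation \eqref{ap.B.general.G.tau}, so that step is the same computation in different clothing. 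Where you genuinely diverge is the remainder estimate, which, as you say, is the principal difficulty. The paper bounds the integral along $\operatorname{Re}(s)=\delta$ by explicit elementary means: Euler's reflection formula, the product representation \eqref{ap.B.G.tau.2} to control $|G(x+iy;\tau)|$, a telescoping monotonicity argument for the factors $\Theta_m$, and Euler's hyperbolic product, ending with a numerically evaluated constant $K_\delta$. You instead reuse the Stirling-type expansion \eqref{exp.log.integrand} along the line, where $R\cos\theta=\delta$ stays bounded, so the $R\log R$ term degenerates to the polynomial factor $R^{a\delta}$ and the linear term yields decay $\operatorname{e}^{(|\operatorname{arg}z|-(1-a/2)\pi)|t|}$. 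This buys you something the paper's written estimate does not deliver: you keep the factor $\operatorname{e}^{t\operatorname{arg}(z)}$ coming from $|z^{-s}|$, and since $\pi/2<(1-a/2)\pi$ for $0<a<1$, your exponential rate remains strictly negative even on the boundary rays $\operatorname{arg}z=\pm\pi/2$, giving a bound uniform over the closed half-plane. By contrast, the paper's inequality \eqref{int.J.aux.2} silently drops this factor (its majorant behaves like $\Theta_0(\varphi-\delta,\eta)/\cosh(\pi\eta)\sim \operatorname{poly}(\eta)\operatorname{e}^{-\pi|\eta|/2}$, which would be exactly cancelled by $\operatorname{e}^{|\eta|\pi/2}$ when $|\operatorname{arg}z|=\pi/2$), so as written it is uniform only on sub-sectors $|\operatorname{arg}z|\le \pi/2-\epsilon$; your finer estimate actually repairs this. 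What the paper's route buys in exchange is a fully explicit constant $K_\delta$, rather than an unquantified constant absorbed into the Stirling remainders. To make your version airtight, you need only add that the $\mathcal{O}(1)$ terms in \eqref{exp.log.integrand} are uniform along $\operatorname{Re}(s)=\delta$ --- the line stays at fixed distance from every pole of the integrand, and the expansions \eqref{ap.B.Stirling.G.tau} and Stirling for $\Gamma(s)$, $\Gamma(1-s)$ hold uniformly as $\theta\to\pm\pi/2$ --- which is routine.
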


\begin{proof}
For this matter we cannot use an arc on the right half-plane (as was done with an arc in the left half-plane, as illustrated in  Figure~\ref{fig.2}) because for this 
arc we would have $\cos\theta > 0$ and then 
the leading term in the expansion 
in \eqref{exp.log.integrand} would be    
$$
\alpha \cos\theta R \log{R} > 0 ,
$$
and consequently the integral along this arc would diverges for $R \to \infty$. 

To choose another contour we first need to look at the relationship between the poles 
located to the right of $C$. The first pole of $\Gamma(1-s)$ is $s=1$ and the first pole of
$G(\varphi+a\tau -s;\tau)$ is $s = \varphi + a\tau$. We will assume that 
\begin{equation*}
\label{choice.varphi}
\varphi > 1 . 
\end{equation*}
Recalling eq.\eqref{def.varphi}, this condition is equal to $l > m - 1/a$. 
Then it follows that the first pole of 
$1/G(\varphi+a\tau -s;\tau)$ is located to the right of the first pole of $G(1-s)$, that is,  
\begin{equation*}
	\varphi + a\tau > 1 . 
\end{equation*}
The second pole to the right of $C$ is therefore $\min(\varphi + a\tau,2)$. Let us denote by 
$\delta$ the midpoint between these two poles, that is, 
\begin{equation}
	\label{def.asympt.delta}
	\delta = \frac{1}{2}\left[1+\min(\varphi+a\tau,2)\right] . 
\end{equation}
Note that $\delta \in (1,3/2]$. 

As an alternative contour we will consider the closed contour $\mathcal{C} = C_0 \, \cup \, C_\uparrow \, \cup \, C^\prime \, \cup C_\downarrow$ illustrated in Figure~\ref{fig.3}. The residue theorem gives  
\begin{equation}
	\label{asymptotic.aux}
	\begin{split}
		\frac{1}{2\pi i}\frac{G(\varphi+a\tau ;\tau)}{G(\varphi ;\tau)} \int_{\mathcal{C}} \frac{\Gamma(s)\Gamma(1-s)G(\varphi  -s;\tau)}{G(\varphi+a\tau -s;\tau)} z^{-s} \, ds & \\[1ex]
		= 
		\frac{G[(1+a(l+1))\tau;\tau]}{G[(1+al)\tau;\tau]} \frac{G[(1+al)\tau-1;\tau]}{G[(1+a(l+1))\tau-1;\tau]} z^{-1} . &
	\end{split}
\end{equation}
and using \eqref{ap.B.general.G.tau} with $\varphi = (1+al)\tau$ and  $\tau = 1/(am)$, 
\begin{equation}
	\label{asymptotic.aux.2}
	\begin{split}
		& \frac{1}{2\pi i}\frac{G(\varphi+a\tau ;\tau)}{G(\varphi ;\tau)} \int_{\mathcal{C}} \frac{\Gamma(s)\Gamma(1-s)G(\varphi  -s;\tau)}{G(\varphi+a\tau -s;\tau)} z^{-s} \, ds   \\[1ex] 
		& \phantom{\frac{1}{2\pi i}\frac{G[(1+a(l+1))\tau;\tau]}{G[(1+al)\tau;\tau]}} = \frac{\Gamma(1+a(l-m+1))}{\Gamma(1+a(l-m))}  z^{-1} .   
	\end{split}
\end{equation}

\begin{figure}[h]
	\begin{center}
		\begin{tikzpicture}[scale=0.9]
			\draw[->] (-1.5,0) -- (2.5,0) node[below right] {${\scriptscriptstyle \operatorname{Re}s}$};
			\draw[->] (0,-3.7) -- (0,3.8) node[above left] {${\scriptscriptstyle \operatorname{Im}s}$};
			\draw[orange,fill=orange] (0,0) circle (1.2pt);
			\draw[orange,fill=orange] (-1,0) circle (1.2pt);
			\draw[orange,fill=orange] (1,0) circle (1.2pt);
			\node[below] at (-1,0) {${\scriptscriptstyle -1}$};
			\node[below] at (1,0) {${\scriptscriptstyle 1}$};
			\node[below left] at (0,0) {${\scriptscriptstyle 0}$};
			\draw[cyan,very thick] (0,0.5) -- (0,3.5);
			\draw[cyan,very thick] (1.7,-3.5) -- (1.7,3.5);
			\draw[cyan,very thick] (0,-0.5) -- (0,-3.5);
			\draw[cyan,very thick] (0,-3.5) -- (1.7,-3.5);
			\draw[cyan,very thick] (0,3.5) -- (1.7,3.5); 
			\draw[->,>=triangle 45,cyan,very thick] (0,1) -- (0,1.9);
			\draw[->,>=triangle 45,cyan,very thick] (1.7,3) -- (1.7,1.4);
			\draw[->,>=triangle 45,cyan,very thick] (0,3.5) -- (1.1,3.5);
			\draw[->,>=triangle 45,cyan,very thick] (1.7,-3.5) -- (0.7,-3.5);
			\draw[cyan, very thick] (0,-0.5) arc (-90:90:0.5);
			\node[left] at (0,1.3) {\footnotesize ${\textcolor{cyan}{C_0}}$};
			\node[right] at (1.7,1.3) {\footnotesize ${\textcolor{cyan}{C^\prime}}$};
			\node[above] at (0.8,3.5) {\footnotesize $\textcolor{cyan}{C_\uparrow}$};
			\node[below] at (0.8,-3.5) {\footnotesize $\textcolor{cyan}{C_\downarrow}$};
			\node[below right] at (1.7,0) {$\scriptscriptstyle \delta$};
			\node[left] at (0,-3.5) {$\scriptstyle -\eta_0$};
			\node[left] at (0,3.5) {$\scriptstyle \eta_0$};
		\end{tikzpicture}
	\end{center}
	\caption{Integration contour for \eqref{asymptotic.aux}.\label{fig.3}}
\end{figure}

Let $\xi = R\cos\theta$ and $\eta = R \sin\theta$. 
Let us analyse the behaviour of the integrals along $C_\uparrow$ and $C_\downarrow$ 
as $\eta \to \pm \infty$. From \eqref{exp.log.integrand} we have for $\eta \to \pm \infty$ that 
\begin{equation*}
	\log{\mathcal{I}(s)} = \eta \operatorname{arg}z - \pi |\eta| - a \eta \arctan\frac{\eta}{\xi} + 
	a \pi |\eta| + \mathcal{O}(\log|\eta|) . 
\end{equation*}
Thus $\mathcal{I}(s)$ decays exponentially for $\eta \to \infty$ if 
\begin{equation*}
	\pm \arg{z} - \pi - a\frac{\pi}{2} + a\pi < 0 
\end{equation*}
with the plus sign for the case $\eta \to \infty$ and the minus sign for $\eta \to -\infty$. 
So the integrals along $C_\uparrow$ and $C_\downarrow$ vanishes for $\eta \to \pm \infty$ if 
\begin{equation*}
	|\arg{z}| < \left(1 - \frac{a}{2}\right)\pi 
\end{equation*}
or 
\begin{equation*}
	\operatorname{Re}z > 0  
\end{equation*}
for $0 < a < 1$.

Now we consider the integral along $C^\prime$ for $\eta \to \infty$. Let us denote
it by $\mathcal{J}(\delta)$, i.e.,
\begin{equation*}
	\label{int.J}
	\mathcal{J}(\delta) = -\frac{1}{2\pi i}\frac{G(\varphi+a\tau;\tau)}{G(\varphi;\tau)} 
	\int_{\delta-i\infty}^{\delta + i\infty} 
	\frac{\Gamma(s)\Gamma(1-s)G(\varphi  -s;\tau)}{G(\varphi+a\tau -s;\tau)} z^{-s} \, ds ,
\end{equation*}
where we recall that $\delta \in (1,3/2]$. 
Taking $s = \delta + i\eta$ and changing the integration variable to $\eta$ we can write 
\begin{equation*}
	\label{int.J.aux}
	|\mathcal{J}(\delta)| \leq \frac{|z|^{-\delta}}{2\pi} \frac{|G(\varphi+a\tau;\tau)|}{|G(\varphi;\tau)|} \int_{-\infty}^\infty 
	\frac{\pi}{|\sin{\pi(\delta+i\eta)}|}\frac{|G(\varphi-\delta-i\eta ;\tau)|}{|G(\varphi+a\tau-\delta-i\eta;\tau)|}d\eta ,
\end{equation*}
where we used Euler's reflection formula. Since 
\begin{equation*}
	|\sin{\pi(\delta+i\eta)}| \geq |\sin\pi\delta| \cosh{\pi\eta} 
\end{equation*}
we can write 
\begin{equation}
	\label{int.J.aux.2}
	|\mathcal{J}(\delta)| \leq \frac{|z|^{-\delta}}{2|\sin\pi\delta|} \frac{|G(\varphi+a\tau;\tau)|}{|G(\varphi;\tau)|} \int_{-\infty}^\infty 
	\frac{1}{ \cosh{\pi\eta} }\frac{|G(\varphi-\delta-i\eta ;\tau)|}{|G(\varphi + a\tau -\delta-i\eta;\tau)|} d\eta .
\end{equation}

Next we need an expression for the quotient of the double gamma functions in \eqref{int.J.aux.2}. 
Using \eqref{ap.B.G.tau.2} we obtain 
\begin{equation}
	\label{modulus.G}
	|G(x+iy;\tau)| = |G(x;\tau)| \operatorname{e}^{\left[-\frac{\tilde{b}(\tau)y^2}{2\tau^2}\right]} 
	\Theta_0(x,y) \prod_{m=1}^\infty \Theta_m(x,y) \operatorname{e}^{\left[-\frac{y^2}{2}\psi^\prime(m\tau)\right]} , 
\end{equation} 
where we defined 
\begin{equation*}
	\Theta_m(x,y) = \sqrt{\prod_{k=0}^\infty \left[1+\frac{y^2}{(x+m\tau+k)^2}\right]} ,
\end{equation*}
and we have used 
\begin{equation*}
	|\Gamma(x+iy)| = \frac{|\Gamma(x)|}{\Theta_0(x,y)} , 
\end{equation*}
which follows from Weierstrass' definition of the gamma function. 
Using \eqref{modulus.G} we have 
\begin{equation*}
	\begin{split}
		\frac{|G(\varphi-\delta-i\eta ;\tau)|}{|G(\varphi + a\tau -\delta-i\eta;\tau)|} = &  
		\frac{|G(\varphi-\delta;\tau)|}{|G(\varphi + a\tau -\delta;\tau)|} \\[1ex]
		& \cdot \prod_{m=0}^\infty \frac{\Theta_m(\varphi-\delta,\eta)}{\Theta_m(\varphi + a\tau-\delta,\eta)} .
	\end{split}
\end{equation*}
Recalling that $a < 1$ we have 
\begin{equation*}
	\begin{split}
		\Theta_m(\varphi + a\tau -\delta,\eta) & = \sqrt{\prod_{k=0}^\infty \left[1 + \frac{\eta^2}{(\varphi - \delta + a\tau + m \tau + k)^2}\right]} \\[1ex]
		& \geq \sqrt{\prod_{k=0}^\infty \left[1 + \frac{\eta^2}{(\varphi - \delta + (m+1) \tau + k)^2}\right]} \\[1ex]
		& = \Theta_{m+1}(\varphi-\delta,\eta) 
	\end{split}
\end{equation*}
So we can write 
\begin{equation*}
	\begin{split}
		\prod_{m=0}^\infty \frac{\Theta_m(\varphi-\delta,\eta)}{\Theta_m(\varphi + a\tau-\delta,\eta)}  & = 
		\lim_{N\to \infty} \frac{\Theta_0(\varphi-\delta,\eta)\Theta_1(\varphi-\delta,\eta)\cdots \Theta_N(\varphi-\delta,\eta)}{\Theta_0(\varphi + a\tau-\delta,\eta)\Theta_1(\varphi + a\tau-\delta,\eta)\cdots \Theta_N(\varphi + a\tau-\delta,\eta)} \\[1ex]
		& \leq \lim_{N\to \infty} \frac{\Theta_0(\varphi-\delta;\eta)}{\Theta_N(\varphi-\delta;\eta)} = \Theta_0(\varphi-\delta;\eta) . 
	\end{split}
\end{equation*}
Thus we have 
\begin{equation*}
	\frac{|G(\varphi-\delta-i\eta ;\tau)|}{|G(\varphi + a\tau -\delta-i\eta;\tau)|} \leq  
	\frac{|G(\varphi-\delta;\tau)|}{|G(\varphi + a\tau -\delta;\tau)|}\Theta_0(\varphi-\delta,\eta) .
\end{equation*}

Returning to \eqref{int.J.aux.2}, we have 
\begin{equation}
	\label{int.J.aux.3}
	|\mathcal{J}(\delta)| \leq \frac{|z|^{-\delta}}{2|\sin\pi\delta|} \frac{|G(\varphi+a\tau;\tau)|}{|G(\varphi;\tau)|}\frac{|G(\varphi-\delta;\tau)|}{|G(\varphi + a\tau -\delta;\tau)|}  \int_{-\infty}^\infty 
	\frac{1}{ \cosh{\pi\eta} }\Theta_0(\varphi-\delta,\eta)  d\eta .
\end{equation}
Since $\delta \leq 3/2$ and $\varphi > 0$ (recall that $l > -1/a$ and $\tau > 0$), we have 
\begin{equation*}
	\varphi - \delta \geq -\frac{3}{2} . 
\end{equation*}
So we have 
\begin{equation*}
	\Theta_0(\varphi-\delta,\eta) \leq 
	\sqrt{\prod_{k=0}^\infty \left[1+\frac{\eta^2}{(-3/2+k)^2}\right]} = 
	\sqrt{\left(1+4\eta^2/9\right)\left(1+4\eta^2\right) 
		\cosh{\pi \eta}} ,
\end{equation*}
where we have used the Euler's hyperbolic product formula \cite{Salwinski} 
\begin{equation*}
	\prod_{k=0}^\infty \left[1+\frac{\eta^2}{(1/2+k)^2}\right] = 
	\cosh{\pi \eta} . 
\end{equation*}
So from \eqref{int.J.aux.3} we have 
\begin{equation*}
	|\mathcal{J}(\delta)| \leq K_\delta |z|^{-\delta} , 
\end{equation*}
where 
\begin{equation*}
	\label{int.J.aux.4}
	K_\delta = \frac{1.96976}{|\sin\pi\delta|} \frac{|G(\varphi+a\tau;\tau)|}{|G(\varphi;\tau)|}\frac{|G(\varphi-\delta;\tau)|}{|G(\varphi + a\tau -\delta;\tau)|}  
\end{equation*}
where the improper integral has been numerically evaluated using Mathematica 14, 
\begin{equation*}
	\int_{-\infty}^\infty 
	\sqrt{\frac{\left(1+ 4\eta^2/9\right)\left(1+4\eta^2\right)}{
			\cosh{\pi \eta}} } d\eta = 3.93953.  
\end{equation*}

Finally from \eqref{asymptotic.aux.2} we conclude that 
\begin{equation*}
	\operatorname{E}_{a,m,l}(-z) =  \frac{\Gamma(1+a(l-m+1))}{\Gamma(1+a(l-m))}  z^{-1} + \mathcal{O}(|z|^{-\delta}) 
\end{equation*}
for $\operatorname{Re}z > 0$ and with $\delta \in (1,3/2]$ given by \eqref{def.asympt.delta}. 

\end{proof}

Therefore, asymptotic formula \eqref{KS_realz_asympt} is still valid for complex numbers, as long as its real part is strictly positive.

In next two subsections we explore the connection between newly defined fractional derivative and KS function. In fact, we show that KS function is eigenfunction for this operator.

\subsection{A first-order equation involving $\boldsymbol{\mathcal{D}^{(\alpha,\gamma)}_t}$ and the KS function } \label{first_order_KS}

Consider the fractional differential equation 
\begin{equation}
\label{fde.1}
\operatorname{\mathcal{D}}^{(\alpha,\gamma)}_t f(t) + \kappa f(t) = 0 . 
\end{equation}
This equation can be used as a model for anomalous relaxation \cite{Capelas1,Capelas2}. 
 We will look for solutions of \eqref{fde.1} of the form
\begin{equation}
\label{eq.3}
f(t) = \sum_{n=0}^\infty f_n t^{(\alpha+\gamma)n} . 
\end{equation}
Let us introduce the compact notation 
\begin{equation*}
[ \mathsf{n}]^\beta_\alpha = \frac{\Gamma(\beta n + 1)}{\Gamma(\beta n - \alpha + 1)} 
\end{equation*}
with
\begin{equation*}
[ {\mathsf{n}_1 \times  \mathsf{n}_2 \times  \cdots \times  \mathsf{n}_k} ]^\beta_\alpha = 
[ \mathsf{n}_1 ]^\beta_\alpha[ \mathsf{n}_2 ]^\beta_\alpha\cdots 
[ \mathsf{n}_k ]^\beta_\alpha
\end{equation*}
and 
\begin{equation*}
[\mathsf{n!}]^\beta_\alpha = [\mathsf{n}\times (\mathsf{n-1})\times \cdots \times \mathsf{1}]^\beta_\alpha .
\end{equation*}
Thus we have 
\begin{equation}
\label{der.order.1}
\left(\operatorname{\mathcal{D}}^{(\alpha,\gamma)}_t\right) f(t) = \sum_{n=0}^\infty 
f_{n+1}[ \mathsf{n+1}]^{\alpha+\gamma}_\alpha t^{(\alpha+\gamma)n} 
\end{equation}
and from \eqref{fde.1}, 
\begin{equation*}
f_{n+1}[ \mathsf{n+1}]^{\alpha+\gamma}_\alpha + \kappa f_n = 0 , \qquad n=0,1,2,\ldots 
\end{equation*}
Thus the solution is 
\begin{equation*}
\label{sol.fde1}
f(t) = \sum_{n=0}^\infty \frac{(-\kappa t^{\alpha+\gamma})^n}{[\mathsf{n!}]^{\alpha+\gamma}_\alpha} f_0 ,
\end{equation*}
where we defined $[\mathsf{0!}]^{\alpha+\gamma}_\alpha = 1$. This corresponds to a KS function with 
\begin{equation*}
a = \alpha, \qquad m = 1 + \frac{\gamma}{\alpha} , \qquad l = \frac{\gamma}{\alpha} ,  
\end{equation*}
that is, 
\begin{equation} \label{first_order_KS_solution}
f(t) = f_0 \, \operatorname{E}_{\alpha,1+\gamma/\alpha,\gamma/\alpha}\left(-\kappa t^{\alpha+\gamma}\right),
\end{equation}
where $f_0$ is arbitrary. For simplicity we will assume $f_0=1$.
\begin{remark} When $\gamma = 0$ we have $\mathcal{D}^{(\alpha,\gamma)}_t = 
{\sideset{_{\scriptscriptstyle C}^{}}{_t^{(\alpha)}}{\operatorname{\mathcal{D}}}}$ and 
\begin{equation*}
[\mathsf{n}]^{\alpha+\gamma}_\alpha = [\mathsf{n}]_\alpha^\alpha = 
\frac{\Gamma(\alpha n+1)}{\Gamma(\alpha(n-1)+1)} , 
\end{equation*}
so that 
\begin{equation*}
[\mathsf{n!}]^{\alpha}_\alpha = \frac{\Gamma(\alpha n + 1)}{\Gamma(1)} . 
\end{equation*}
Thus 
\begin{equation*}
\operatorname{E}_{\alpha,1,0}\left(-\kappa t^{\alpha}\right) = 
\sum_{n=0}^\infty \frac{(-\kappa t^{\alpha})^n}{\Gamma(\alpha n + 1)} = \operatorname{E}_\alpha(-\kappa 
t^\alpha) ,
\end{equation*}
where $\operatorname{E}_\alpha(\cdot)$ is the Mittag-Leffler function (see \cite{MittagLeffler}). 
\end{remark}
\begin{remark}
These observations show that KS function $\operatorname{E}_{\alpha,1+\gamma/\alpha,\gamma/\alpha}\left(-\kappa t^{\alpha+\gamma}\right)$ is an eigenfunction for the non-local differential operator $\operatorname{\mathcal{D}}^{(\alpha,\gamma)}_t$. This is in accordance with the fact that Mittag-Leffler function $\operatorname{E}_\alpha(-\kappa t^{\alpha})$ is an eigenfunction for Caputo fractional derivative $ {\sideset{_{\scriptscriptstyle C}^{}}{_t^{(\alpha)}}{\operatorname{\mathcal{D}}}}$  and exponential function $\exp(-\kappa t)$ is eigenfunction for standard first-order differential operator $\frac{d}{dt}$.
\end{remark}
Therefore, KS function and the operator $\operatorname{\mathcal{D}}^{(\alpha,\gamma)}_t$ provide the right tool for generalising results regarding fractional processes involving Caputo fractional derivative.

In fact, results from \cite{Pearson} can be fully extended with this approach, by replacing Caputo fractional derivative with the proposed operator $\operatorname{\mathcal{D}}^{(\alpha,\gamma)}_t$. This leads to more general fractional Pearson diffusions with more general time change model. We will refer to this new model as stretched non-local Pearson diffusion. By this we emphasize the role of the new stretching parameter $\gamma$. By setting $\gamma=0$ we recover results from \cite{Pearson}.
\bigskip

\subsection{A second-order equation involving $\boldsymbol{\mathcal{D}^{(\alpha,\gamma)}_t}$ and the KS function } 
Consider the fractional differential equation 
\begin{equation} \label{eq.2}
\left(\operatorname{\mathcal{D}}^{(\alpha,\gamma)}_t\right)^2 f(t) + a 
\left(\operatorname{\mathcal{D}}^{(\alpha,\gamma)}_t\right)f(t) + 
b f(t) = 0 , 
\end{equation}
where $a$ and $b$ are constants. 
The case $\gamma = 0$ and $\alpha = 1$ is related to the usual time part of 
telegraph equation (for details see \cite{Telegraph_eq}). 

We will seek solutions to equation \eqref{eq.2} using the form described in equation \eqref{eq.3} 
The term $\left(\operatorname{\mathcal{D}}^{(\alpha,\gamma)}_t\right)f(t)$ 
is given by \eqref{der.order.1} and 
 \begin{equation*}
\left(\operatorname{\mathcal{D}}^{(\alpha,\gamma)}_t\right)^2 f(t) = \sum_{n=0}^\infty 
f_{n+2}[   (\mathsf{n+2}) \times (\mathsf{n+1}) ]^{\alpha+\gamma}_\alpha t^{(\alpha+\gamma)n} ,
\end{equation*}
which in \eqref{eq.2} gives the recurrence relation 
\begin{equation*}
f_{n+2} = \frac{(-a)}{[\mathsf{n+2}]^{\alpha+\gamma}_\alpha} f_{n+1} + 
\frac{(-b)}{[   (\mathsf{n+2})\times (\mathsf{n+1}) ]^{\alpha+\gamma}_\alpha} f_{n} . 
\end{equation*}
Using this recurrence relation we obtain 
\begin{equation}
\label{gen.coeff}
f_{n} = \frac{\mathcal{U}_{n}(-a,-b)}{[ \mathsf{n!}]^{\alpha+\gamma}_\alpha} [ \mathsf{1}]^{\alpha+\gamma}_\alpha f_1 + 
\frac{(-b)\mathcal{U}_{n-1}(-a,-b)}{[ \mathsf{n}!]^{\alpha+\gamma}_\alpha}  f_0 , \qquad n = 2,3,\ldots
\end{equation}
where $\mathcal{U}_{n}(-a,-b)$  is given by 
\begin{equation*}
\mathcal{U}_{n}(-a,-b) = \sum_{j=0}^{\lfloor (n-1)/2\rfloor} \binom{n-1-j}{j} (-a)^{n-1-2j}
(-b)^j,  
\end{equation*}
and $\lfloor \cdot \rfloor$ denotes the floor function and $n = 1,2,3,\ldots$. 
Note that \eqref{gen.coeff} also holds for $n=1$ if we define $\mathcal{U}_0(-a,-b) = 0$. 

According to \cite{Fibonacci}, bivariate Fibonacci polynomial $u_n(x,y)$ can be expressed as
\begin{equation}\label{BivFibFormula}
\mathcal{U}_n(-a,-b)=\frac{(a^{*})^n-(b^{*})^n}{a^{*}-b^{*}},
\end{equation}
where 
\begin{equation} \label{a*b*}
a^{*}=\frac{-a}{2}+\sqrt{\left(\frac{-a}{2}\right)^2-b}, \quad  b^{*}=\frac{-a}{2}-\sqrt{\left(\frac{-a}{2}\right)^2-b}.
\end{equation}

Notice if $b>(\frac{-a}{2})^2$ then $a^{*}$ and $b^{*}$ are complex conjugate numbers so that $\mathcal{U}_n(-a,-b)$ is still real number for all pairs $a$ and $b$.
Now, from \eqref{gen.coeff} we have
\begin{equation*}
f_{n} = \frac{-a\cdot \mathcal{U}_{n}(-a,-b)-b\cdot \mathcal{U}_{n-1}(-a,-b)}{[ \mathsf{n!}]^{\alpha+\gamma}_\alpha}=\frac{\mathcal{U}_{n+2}(-a,-b)}{[ \mathsf{n!}]^{\alpha+\gamma}_\alpha} , \qquad n = 2,3,\ldots
\end{equation*}
Taking this into account together with formula \eqref{BivFibFormula} for general solution we have

\begin{equation*}
f(t)=f_0+\frac{[1]_{\alpha}^{\alpha+\gamma}a^* f_1-b f_0}{a^*(a^*-b^*)}\sum_{n=1}^{\infty}\frac{\left(a^{*} t^{\alpha+\gamma}\right)^n}{[n!]_{\alpha}^{\alpha+\gamma}}+\frac{b f_0-[1]_{\alpha}^{\alpha+\gamma}b^* f_1}{b^* (a^*-b^*)}\sum_{n=1}^{\infty}\frac{\left(b^* t^{\alpha+\gamma}\right)^n}{[n!]_{\alpha}^{\alpha+\gamma}}.
\end{equation*}
Let
\begin{equation}\label{K1K2}
	K_1=\frac{[1]_{\alpha}^{\alpha+\gamma}a^* f_1-b f_0}{a^*(a^*-b^*)}, \quad K_2=\frac{b f_0-[1]_{\alpha}^{\alpha+\gamma}b^* f_1}{b^* (a^*-b^*)}.
\end{equation}
Notice that $K_1+K_2=f_0$.
Therefore
\begin{equation*}
	f(t)=K_1\sum_{n=0}^{\infty}\frac{\left(a^{*} t^{\alpha+\gamma}\right)^n}{[n!]_{\alpha}^{\alpha+\gamma}}+K_2\sum_{n=0}^{\infty}\frac{\left(b^* t^{\alpha+\gamma}\right)^n}{[n!]_{\alpha}^{\alpha+\gamma}} .
\end{equation*}
Finally, taking into account definition of KS function we obtain solution in the form
\begin{equation*}
	f(t)=K_1\operatorname{E}_{\alpha,1+\gamma/\alpha,\gamma/\alpha}\left(a^*t^{\alpha+\gamma}\right)+K_2\operatorname{E}_{\alpha,1+\gamma/\alpha,\gamma/\alpha}\left(b^*t^{\alpha+\gamma}\right),
\end{equation*}
where $f_0$ and $f_1$ are arbitrary. For the rest of the paper we assume $f_0=1$ and $f_1=\frac{1}{[1]_{\alpha}^{\alpha+\gamma}}$, which simplifies constants $K_1$ and $K_2$ to
\begin{equation*}
	K_1=\frac{a^*-b }{a^*(a^*-b^*)}, \quad K_2=\frac{b -b^*}{b^* (a^*-b^*)}
\end{equation*}
with $K_1+K_2=1$.

\begin{remark}
Taking into account that $a^*$ and $b^*$ are complex conjugate numbers, it is not hard to see that $K_1$ and $K_2$ are also conjugates, which implies 
\begin{align*}
f(t)&=K_1\operatorname{E}_{\alpha,1+\gamma/\alpha,\gamma/\alpha}\left(a^*t^{\alpha+\gamma}\right)+K_2\operatorname{E}_{\alpha,1+\gamma/\alpha,\gamma/\alpha}\left(b^*t^{\alpha+\gamma}\right) \\
    &=K_1\operatorname{E}_{\alpha,1+\gamma/\alpha,\gamma/\alpha}\left(a^*t^{\alpha+\gamma}\right)+\overline{K_1\operatorname{E}_{\alpha,1+\gamma/\alpha,\gamma/\alpha}\left(a^*t^{\alpha+\gamma}\right)} \\
    &=2Re\left(K_1\operatorname{E}_{\alpha,1+\gamma/\alpha,\gamma/\alpha}\left(a^*t^{\alpha+\gamma}\right)\right).
\end{align*}
Therefore, the solution of \eqref{eq.2} is a real-valued function.
\end{remark}

\begin{remark}
When $\gamma = 0$  we have
\begin{equation*}
	f(t) = K_1 \, \operatorname{E}_{\alpha}\left(a^* t^{\alpha}\right) + 
	K_2 \, \operatorname{E}_{\alpha}\left(b^*t^{\alpha}\right) ,
\end{equation*}
where $\operatorname{E}_{\alpha}(\cdot)$ is the Mittag-Leffler function. 
\end{remark}

\section{Pearson diffusions and their classification}  \label{PearsonDiffusions}
 
For completeness, we will briefly summarize the discussion of \cite{Pearson} concerning 
the classification of Pearson diffusions. The stationary density of a diffusion process $\boldsymbol{m}(x)$ (if exists) satisfies the time-independent Fokker-Planck equation, which for Pearson diffusions reduces to 
\begin{equation*}
	\frac{\boldsymbol{m}^\prime(x)}{\boldsymbol{m}(x)} = \frac{\mu(x) - D^\prime(x)}{D(x)} = \frac{(a_0-d_1) + (a_1-2d_2)x}{d_0+d_1 x + d_2 x^2} . 
\end{equation*}  
Note that this is the equation for the weight functions of the hypergeometric equation \cite{Nikiforov}
\begin{equation}
	\label{hg.eq}
	D(x) g^{\prime\prime}(x) + \mu(x)g^\prime(x) + \lambda g(x) = 0 . 
\end{equation}

There are six different types of Pearson diffusions corresponding to six different classes of solutions of the equation (\ref{hg.eq}). These classes depend on the degree of $D(x)$, which can be zero, one, or two, and if $D(x)$ has a degree of two, on the sign of the discriminant of $D(x$). When $D(x)$ is constant, linear, or quadratic with a positive discriminant, the spectrum of $\mathcal{G}$ is purely discrete, and the eigenfunctions are the well-known classical orthogonal polynomials, and the eigenvalues are of the form
\begin{equation}
	\label{eq.eigenvalues}
	\lambda_n = -n[a_1 + d_2(n-1)] , \qquad n = 0,1,2,\ldots. 
\end{equation}
For the remaining cases, we have a mixed spectrum. We will restrict ourselves to the purely discrete case, which can be classified as follows \cite{Pearson,Meerschaert}:

\bigskip 
\noindent (i) The \textit{Ornstein-Uhlenbeck (OU) process} corresponds to a constant diffusion coefficient. Denoting $D(x) = \theta \sigma^2$ and $\mu(x) = -\theta(x-\mu)$, 
where $\mu \in \mathbb{R}$, $\sigma^2>0$ and $\theta > 0$ is a correlation function parameter, we obtain 
\begin{equation*}
	\boldsymbol{m}(x) = \frac{1}{\sqrt{2\pi \sigma^2}}\operatorname{exp}\left(
	-\frac{(x-\mu)^2}{2\sigma^2}\right) , 
\end{equation*}
with $x\in \mathbb{R}$, i.e.~stationary distribution is normal with parameters $\mu$ and $\sigma^2$. 
The eigenvalues of \eqref{eigen.hg} 
are $\lambda_n = \theta n$ ($n = 0,1,2,\ldots$) and
the corresponding eigenfunctions are the Hermite polynomials $H_n(x)$, which can be 
defined in terms of the Rodrigues formula 
\begin{equation*}
	H_n(x) =  \frac{(-1)^n}{\boldsymbol{m}(x)}\frac{d^n\;}{dx^n}\boldsymbol{m}(x) , \quad 
	n = 0,1,2,\ldots . 
\end{equation*}

\bigskip

\noindent (ii) The \textit{Cox-Ingersoll-Ross (CIR) process} corresponds to $D(x) = d_0 + d_1 x$. The usual parametrization of the CIR process is $D(x) = \theta x/a$ and 
$\mu(x) = -\theta(x-b/a)$ with $\theta, a, b >0$. In this case 
\begin{equation*}
	\boldsymbol{m}(x) = \frac{a^b}{\Gamma(b)} x^{b-1}\operatorname{e}^{-ax} ,
\end{equation*}
with $x>0$, which corresponds to gamma stationary distribution with parameters $b$ and $a$.  
The eigenvalues of \eqref{eigen.hg} are $\lambda_n = \theta n$ ($n = 0,1,2,\ldots$)  
the corresponding eigenfunctions are the Laguerre polynomials $L^\nu_n(a x)$ ($\nu = b-1$), which can be 
defined in terms of the Rodrigues formula 
\begin{equation*}
	L^\nu_n(x) = \frac{x^{-\nu}\operatorname{e}^x}{n!} \frac{d^n\;}{dx^n}\left( 
	x^{\nu+n}\operatorname{e}^{-x}\right) , \quad 
	n = 0,1,2,\ldots . 
\end{equation*}

\bigskip

\noindent (iii) The \textit{Jacobi process} corresponds to the case when $D(x)$ 
is a second-order polynomial with positive discriminant. Thus $D(x)$ can be 
written as $D(x) = d_2 (x-x_1)(x-x_2)$ with $x_1 \neq x_2$. The usual 
parametrization is $D(x) = 1-x^2$ and $\mu(x) = -(a+b+2)x+b-a$ with $a,b>-1$. We have
\begin{equation*}
	\boldsymbol{m}(x) = \frac{(1-x)^a(1+x)^b}{B(a+1,b+1) 2^{a+b+1}} , 
\end{equation*}
for $x\in (-1,1)$ and $B(\cdot,\cdot)$ is the beta function. This is beta stationary distribution with parameters $a$ and $b$. The eigenvalues 
are $\lambda_n = n(n+a+b+1)$ ($n=0,1,2,\ldots$) and the corresponding 
eigenfunctions are the Jacobi polynomials $P_n^{(a,b)}(x)$ given by 
\begin{equation*}
	P_n^{(a,b)}(x) = \frac{(-1)^n (1-x)^{-a}(1+x)^{-b}}{2^n n!} \frac{d^n\;}{dx^n} 
	\left[(1-x)^{a+n}(1+x)^{b+n}\right] . 
\end{equation*}
Let us observe the Cauchy problem  \eqref{kolmo} with solution of the form
$g(y,t) = T(t) \varphi(y)$. Then we have 
\begin{equation}
\label{eigen.hg}
\mathcal{G}[\varphi] = -\lambda \varphi ,
\end{equation}  
which is the hypergeometric equation (\ref{hg.eq}), 
where $\lambda$ was introduced as a separation constant. It is well known that there are
polynomial solutions $Q_n(x)$ of the hypergeometric equation corresponding to eigenvalues $\lambda_n$, and that they form an orthonormal system with weight function $\boldsymbol{m}(x)$, i.e. 
\begin{equation}
\label{ortho.eq}
\int Q_n(x) Q_m(x)\boldsymbol{m}(x)\, dx = \delta_{mn} c_n^2 . 
\end{equation}
If $T_n(t)$ is the corresponding time solution, we can write the general solution as 
\begin{equation}
\label{eq.g.aux.1}
g(y,t) = \sum_{n=0}^\infty a_n T_n(t)Q_n(y) , 
\end{equation}
where $a_n$ are arbitrary constants. Using the initial condition $g(y,0) = g(y)$ we have
\begin{equation*}
g(y) = \sum_{n=0}^\infty a_n T_n(0) Q_n(y) , 
\end{equation*}
and using \eqref{ortho.eq}, 
\begin{equation*}
a_n T_n(0) = \frac{1}{c_n^2} \int Q_n(x) g(x) \boldsymbol{m}(x)\, dx. 
\end{equation*}
Using this in \eqref{eq.g.aux.1} and recalling \eqref{eq.g.def} we can write 
the transition density for the Pearson diffusion as 
\begin{equation}\label{denpear}
\boldsymbol{p}(x,t;y) = \boldsymbol{m}(x) \sum_{n=0}^\infty \frac{T_n(t)}{T_n(0)c_n^2} 
Q_n(x)Q_n(y) . 
\end{equation}
Note that for \eqref{kolmo} we have $T_n(t) = \operatorname{e}^{-\lambda_n t}$ (for details on spectral representation of Pearson diffusion with discrete spectrum see e.g. \cite{Karlin}).

\begin{remark}
Normalizing constants $c_n$ for orthonormal polynomials in the previous discussion are
\begin{itemize}
\item $c_n=\frac{\sigma^n}{\sqrt{n!}}$ for Hermite polynomial $H_n(x)$, 
\item $c_n=\frac{1}{\sqrt{B(b, n)}}$ for Laguerre polynomial $L^{b-1}_n(x)$,
\item $c_n=\sqrt{\frac{2^{a+b+1}}{2n+a+b+1}\frac{\Gamma(n+a+1)\Gamma(n+b+1)}{\Gamma(n+1)\Gamma(n+a+b+1)}}$ for Jacobi polynomial $P^{(a,b)}_n(x)$ .
\end{itemize}
In the rest of the paper we assume we are using normalized orthonormal polynomials.  Instead of writing $Q_n(x)/c_n$ we will simply write $Q_n(x)$ for corresponding normalized polynomial.
\end{remark}

\section{Stretched non-local Pearson diffusions} \label{sec_fpd}
In this section we provide generalization of analytical results obtained in \cite{Pearson}.  Instead of Caputo fractional derivative of order $0 <\alpha <1$ we use its modification \eqref{Caputo_variation} with positive parameters $\alpha$ and $\gamma$ such that $0<\alpha+\gamma \leq 1$.

Our model for a new type of non-local fractional  Pearson diffusion is based on the fractional Cauchy problem
\begin{equation}\label{fpd_frac_eq}
\operatorname{\mathcal{D}}^{(\alpha,\gamma)}_t g(y,t)  = 
	\mathcal{G}[g], \quad g(y,0)=g(y).
\end{equation}
Taking spectral separation of variables approach as in \cite{Pearson} we seek solution $g(y,t)=Y(y) T(t)$ of \eqref{fpd_frac_eq}, where $T(t)$ is solution of equation
\begin{equation*}
\operatorname{\mathcal{D}}^{(\alpha,\gamma)}_t T(t)= -\lambda T(t)
\end{equation*}
and  $Y(y)$ is solution of equation
\begin{equation*}
	\mathcal{G}Y(y)= -\lambda Y(y).
\end{equation*}

Taking into account \eqref{first_order_KS_solution} (see Section \ref{first_order_KS})) 
we have temporal solution $T(t)=\operatorname{E}_{\alpha,1+\gamma/\alpha,\gamma/\alpha}\left(-\lambda t^{\alpha+\gamma}\right)$, while solution of space part equation is provided by corresponding eigenfunctions of Pearson diffusion (see Section \ref{PearsonDiffusions}).

Therefore, the only difference from \cite{Pearson} is in temporal part, where we have solution $T(t)=\operatorname{E}_{\alpha,1+\gamma/\alpha,\gamma/\alpha}\left(-\lambda t^{\alpha+\gamma}\right)$  (Kilbas-Saigo function) instead of $T(t)=\operatorname{E}_{\alpha}\left(-\lambda t^{\alpha}\right)$ (Mittag-Leffler function).
Furthermore, KS function is generalization of Mittag-Leffler function since for $\gamma=0$ we have $\operatorname{E}_{\alpha,1,0}\left(-\lambda t^{\alpha}\right)=\operatorname{E}_{\alpha}\left(-\lambda t^{\alpha}\right)$.
Finally, we have proved that KS function has analogous bounds and asymptotic behaviour as Mittag-Leffer function (see \eqref{KS_function_bounds}, \eqref{KS_realz_asympt},
	\eqref{KS_realz_asympt_spec}), therefore we can recover all analytical results as in \cite{Pearson}, which we state as follows.

\begin{theorem} \label{trans_dens_steched_Pearson}
	For the three classes of Pearson diffusions (OU, CIR and Jacobi) whose stationary density $\boldsymbol{m}$ and orthogonal polynomials $\left(Q_n, \, n \in \mathbb{N}_0\right)$ are given in Section \ref{PearsonDiffusions}, for any $0<\alpha+\gamma\leq1$, the series
	\begin{equation} \label{trans_fpd}
		p_{\alpha, \gamma}(x,t;y)=\boldsymbol{m}(x)\sum_{n=0}^{\infty}\operatorname{E}_{\alpha,1+\gamma/\alpha,\gamma/\alpha}\left(-\lambda_n t^{\alpha+\gamma}\right) Q_n(x)Q_n(y)
	\end{equation}
 converges for fixed $t>0$ and $x$, $y$ in the state space of the corresponding diffusion (OU, CIR or Jacobi).
\end{theorem}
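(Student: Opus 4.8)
The plan is to establish convergence of the series in \eqref{trans_fpd} by bounding each term and comparing with the known convergence of the classical ($\gamma=0$) transition density from \cite{Pearson}. The key structural fact is that the Kilbas-Saigo function $\operatorname{E}_{\alpha,1+\gamma/\alpha,\gamma/\alpha}(-\lambda_n t^{\alpha+\gamma})$ plays exactly the role that the Mittag-Leffler function $\operatorname{E}_\alpha(-\lambda_n t^\alpha)$ plays in the fractional Pearson diffusion of \cite{Pearson}, and by the bounds \eqref{KS_function_bounds} it decays no slower than a constant-over-$x$ tail. First I would recall that the eigenvalues $\lambda_n$ grow like $n$ (for OU and CIR) or like $n^2$ (for Jacobi), as recorded in \eqref{eq.eigenvalues}, so that $\lambda_n t^{\alpha+\gamma} \to \infty$ as $n \to \infty$ for each fixed $t>0$.

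The main technical step is a termwise bound on the temporal factor. Since $\lambda_n \geq 0$ and $t^{\alpha+\gamma}>0$, the argument $-\lambda_n t^{\alpha+\gamma}$ is real and nonpositive, so the upper bound in \eqref{KS_function_bounds} applies directly with $a=\alpha$, $m=1+\gamma/\alpha$, and $x = \lambda_n t^{\alpha+\gamma}$, giving
\begin{equation*}
0 \leq \operatorname{E}_{\alpha,1+\gamma/\alpha,\gamma/\alpha}\left(-\lambda_n t^{\alpha+\gamma}\right) \leq \frac{1}{1+ \frac{\Gamma(1+a(m-1))}{\Gamma(1+am)}\lambda_n t^{\alpha+\gamma}} \leq \frac{C_t}{\lambda_n}
\end{equation*}
for $n$ large, where $C_t$ depends on $t$ but not on $n$. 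More usefully, the asymptotic \eqref{KS_realz_asympt_spec} shows the temporal factor behaves like $(\Gamma(1-\alpha)\lambda_n t^{\alpha+\gamma})^{-1}$ for large $n$, which is the same rate of decay in $\lambda_n$ as the Mittag-Leffler function exhibits. Thus the coefficients in \eqref{trans_fpd} are, up to a fixed multiplicative constant depending only on $t$, dominated termwise by those of the classical fractional Pearson diffusion transition density whose convergence is already established in \cite{Pearson}.

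The remaining ingredient is control of the spatial eigenfunction products $\boldsymbol{m}(x)Q_n(x)Q_n(y)$. Here I would invoke exactly the same estimates used in \cite{Pearson} for each of the three classical families — Hermite for OU, Laguerre for CIR, Jacobi for Jacobi — which provide polynomial-in-$n$ growth bounds on $|Q_n(x)Q_n(y)|$ uniform on compact subsets (or on the full state space, depending on the family) of the diffusion. Combining the $1/\lambda_n$ decay of the temporal factor with these polynomial growth bounds, and using that $\lambda_n$ grows at least linearly in $n$, yields a convergent majorant series (a constant times $\sum_n n^{p}/\lambda_n$ for an appropriate power $p$), which forces absolute and hence ordinary convergence of \eqref{trans_fpd} for fixed $t>0$ and $x,y$ in the state space.

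The main obstacle I anticipate is not the temporal decay, which \eqref{KS_function_bounds} and \eqref{KS_realz_asympt_spec} handle cleanly, but rather ensuring the spatial polynomial estimates are strong enough that the product with the $1/\lambda_n$ decay is summable, particularly in the Laguerre and Jacobi cases where the orthonormal polynomials and the weight $\boldsymbol{m}(x)$ interact delicately near the boundary of the state space. I expect to resolve this by citing the relevant growth estimates for classical orthogonal polynomials verbatim from \cite{Pearson}, since the spatial part of the problem is structurally identical to theirs; the only genuinely new element is replacing the Mittag-Leffler temporal factor with the Kilbas-Saigo one, and the bounds \eqref{KS_function_bounds} guarantee this replacement preserves the decay rate needed for the comparison to go through.
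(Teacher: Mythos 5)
Your proposal is correct and follows essentially the same route as the paper, which likewise proves convergence by repeating the argument of [\cite{Pearson}, Lemma 3.1] with the Mittag-Leffler temporal factor replaced by the Kilbas-Saigo function, justified by the bounds \eqref{KS_function_bounds} and the asymptotics \eqref{KS_realz_asympt_spec} giving the same $\mathcal{O}(1/\lambda_n)$ decay. The paper's proof is in fact terser than yours — it simply cites the analogy and the KS bounds without writing out the termwise majorant — so your spelled-out comparison argument is a faithful expansion of exactly what the authors intend.
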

\begin{proof}
The proof mimics the method in [\cite{Pearson}, Lemma 3.1.] with Mittag-Leffler function being replaced with Kilbas-Saigo function. Since Kilbas-Saigo function is extension of Mittag-Leffler function and has analogous bounds and asymptotics (see \eqref{KS_function_bounds},\eqref{KS_realz_asympt},
\eqref{KS_realz_asympt_spec}) we can just repeat the steps in [\cite{Pearson}, Lemma 3.1.] and therefore the proof is finished.
\end{proof}
\begin{theorem}\label{Cauchy_problem_Gen_streched}
	Let $h \in L_2(\boldsymbol{m}(x)dx)$ be such a function that $\sum_{n} a_n Q_n$, where $$a_n=\int_{S} h(x) Q_n(x)\boldsymbol{m}(x) dx,$$ converges to function $h$ uniformly on finite intervals (subsets of state space $S$ of corresponding diffusion). Then the fractional Cauchy problem
	\begin{equation} \label{Cauchy_problem_fpd}
		 \mathcal{D}^{(\alpha,\gamma)}_t g(t,y)= \mathcal{G} g(t,y), \quad g(0,y)=h(y) 
	\end{equation}
	has a strong solution given by
	\begin{equation} \label{Cauchy_problem_solution_fpd}
		g_{\alpha, \gamma}(t,y)=\sum_{n=0}^{\infty} \operatorname{E}_{\alpha,1+\gamma/\alpha,\gamma/\alpha}\left(-\lambda_n t^{\alpha+\gamma}\right) Q_n(y) a_n.
	\end{equation}
	The series in \eqref{Cauchy_problem_solution_fpd} converges absolutely for each  fixed $t>0$, $y \in S$, and \eqref{Cauchy_problem_fpd} holds pointwise.
\end{theorem}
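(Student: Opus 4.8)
The plan is to construct the solution by spectral separation of variables and then verify, through convergence estimates, that the series \eqref{Cauchy_problem_solution_fpd} is genuinely a strong solution. Writing $T_n(t) := \operatorname{E}_{\alpha,1+\gamma/\alpha,\gamma/\alpha}(-\lambda_n t^{\alpha+\gamma})$, the starting point is the pair of eigen-relations that make each summand an exact solution of the operator equation: on the spatial side $\mathcal{G}Q_n = -\lambda_n Q_n$ (Section \ref{PearsonDiffusions}), and on the temporal side $\mathcal{D}^{(\alpha,\gamma)}_t T_n = -\lambda_n T_n$, which is precisely the eigenfunction property of the KS function established in Section \ref{first_order_KS}. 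Hence every term $a_n T_n(t) Q_n(y)$ satisfies $\mathcal{D}^{(\alpha,\gamma)}_t[\,\cdot\,] = \mathcal{G}[\,\cdot\,]$ termwise, and the content of the theorem reduces to convergence statements that license these formal manipulations.

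First I would establish absolute convergence of \eqref{Cauchy_problem_solution_fpd} for fixed $t>0$ and $y \in S$. The decisive point is that the parameters satisfy $l = \gamma/\alpha = m-1$, so the two-sided bound \eqref{KS_function_bounds} and the large-argument asymptotics \eqref{KS_realz_asympt_spec} apply directly, giving $0 \le T_n(t) \le (1 + c\,\lambda_n t^{\alpha+\gamma})^{-1} \le C/(\lambda_n t^{\alpha+\gamma})$ for large $n$. Combining this decay with the $\ell^2$-summability of the coefficients $\{a_n\}$ (Parseval, since $h \in L_2(\boldsymbol{m}\,dx)$), with the eigenvalue growth \eqref{eq.eigenvalues}, and with the polynomial-in-$n$ growth of $|Q_n(y)|$ for each of the three families, a Cauchy--Schwarz estimate bounds $\sum_n |a_n|\,T_n(t)\,|Q_n(y)|$ by $(\sum_n |a_n|^2)^{1/2}(\sum_n T_n(t)^2 Q_n(y)^2)^{1/2}<\infty$. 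Since these are exactly the bounds available for the Mittag-Leffler function, this step transcribes the estimate of [\cite{Pearson}, Lemma 3.1] essentially verbatim.

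The main obstacle is justifying the termwise action of the operators, namely that $\mathcal{D}^{(\alpha,\gamma)}_t g = \sum_n a_n(\mathcal{D}^{(\alpha,\gamma)}_t T_n) Q_n = -\sum_n a_n \lambda_n T_n(t) Q_n(y)$ and that the same expression represents $\mathcal{G}g$. The difficulty is that the differentiated summand carries an extra factor $\lambda_n$, and since $\lambda_n T_n(t) \to L(t):=(\Gamma(1-\alpha)\,t^{\alpha+\gamma})^{-1}\neq 0$ by \eqref{KS_realz_asympt_spec}, the series $\sum_n a_n \lambda_n T_n(t) Q_n(y)$ no longer converges absolutely; this is exactly why the hypothesis that $\sum_n a_n Q_n$ converges to $h$ \emph{uniformly} on finite intervals is imposed, rather than mere $L_2$-membership. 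Here I would split $\lambda_n T_n(t) = L(t) + \epsilon_n(t)$ with $\epsilon_n(t)\to 0$ at the rate furnished by the refined asymptotics of Theorem \ref{KS_asymtptocis_complex_z}, so that $\sum_n a_n \lambda_n T_n Q_n = L(t)\sum_n a_n Q_n(y) + \sum_n a_n \epsilon_n(t) Q_n(y)$: the first series converges to $L(t)h(y)$ by the uniform hypothesis, while the second converges by the decay of $\epsilon_n(t)$ against the coefficient and eigenvalue estimates. To pass the time operator through the sum I would then write $\mathcal{D}^{(\alpha,\gamma)}_t$ in its integral form \eqref{Caputo_variation} and interchange it with the summation by dominated convergence on $[0,t]$, the dominating control near $\tau=t$ again being supplied by the uniform-convergence hypothesis. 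Identifying $-\sum_n a_n \lambda_n T_n Q_n$ simultaneously as $\mathcal{D}^{(\alpha,\gamma)}_t g$ and as $\mathcal{G}g$ then yields the pointwise equation.

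Finally, the initial condition is immediate from $T_n(0) = \operatorname{E}_{\alpha,1+\gamma/\alpha,\gamma/\alpha}(0) = c_0 = 1$, which gives $g_{\alpha,\gamma}(0,y) = \sum_n a_n Q_n(y) = h(y)$ by hypothesis. Since every estimate invoked is the exact counterpart of one available for the Mittag-Leffler function, the whole argument mirrors the proof in \cite{Pearson,Meerschaert} step for step, with the KS bounds and asymptotics of Section \ref{KS.section} standing in for the classical Mittag-Leffler ones.
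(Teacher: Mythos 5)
Your proposal is correct and takes essentially the same approach as the paper: the paper's proof simply defers to [\cite{Pearson}, Theorem 3.2], substituting the Kilbas--Saigo function for the Mittag--Leffler function via the eigenfunction property of $\mathcal{D}^{(\alpha,\gamma)}_t$ from Section \ref{first_order_KS} and the analogous bounds \eqref{KS_function_bounds} and asymptotics \eqref{KS_realz_asympt}, \eqref{KS_realz_asympt_spec}, which is exactly the argument you reconstruct. Your explicit decomposition $\lambda_n T_n(t)=L(t)+\epsilon_n(t)$, with the remainder rate drawn from Theorem \ref{KS_asymtptocis_complex_z}, merely spells out the termwise-differentiation step that the paper leaves implicit here (and uses in the same spirit in its proof of Theorem \ref{Cauchy_problem_Gen}), so it is a faithful, if more detailed, rendering of the paper's own proof.
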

\begin{proof}
	The proof is similar to that in [\cite{Pearson}, Theorem 3.2.] with two major differences. Firstly, we are using Caputo-type fractional derivative $\mathcal{D}^{(\alpha,\gamma)}_t$ instead of Caputo fractional derivative and secondly, analytical solution has Kilbas-Saigo function in place of Mittag-Leffler function. Recall that Kilbas-Saigo function is an eigenfunction of $\mathcal{D}^{(\alpha,\gamma)}_t$ just like Mittag-Leffler function is an eigenfunction for Caputo fractional derivative. Since the two functions have analogous bounds and asymptotics (see \eqref{KS_function_bounds},\eqref{KS_realz_asympt},
	\eqref{KS_realz_asympt_spec}) we can just repeat the steps in [\cite{Pearson}, Theorem 3.2.] and therefore the proof is finished.
\end{proof}
\begin{theorem}\label{Cauchy_problem_FP_streched}
	Let $h/\boldsymbol{m}(x) \in L_2(\boldsymbol{m}(x)dx)$ be such a function that $\sum_{n} a_n Q_n$, where $$a_n=\int_{S} h(y) Q_n(y) dy,$$ converges to function $h/\boldsymbol{m}$ uniformly on finite intervals (subsets of state space $S$ of corresponding diffusion). Then the fractional Cauchy problem
	\begin{equation} \label{Cauchy_problem2_fpd}
		 \mathcal{D}^{(\alpha,\gamma)}_t g(t,x)= \mathcal{L} g(t,x), \quad g(0,x)=h(x) 
	\end{equation}
	has a strong solution given by
	\begin{equation} \label{Cauchy_problem_solution2_fpd}
		g_{\alpha, \gamma}(t,x)=\boldsymbol{m}(x)\sum_{n=0}^{\infty} \operatorname{E}_{\alpha,1+\gamma/\alpha,\gamma/\alpha}\left(-\lambda_n t^{\alpha+\gamma}\right)Q_n(x) a_n.
	\end{equation}
	The series in \eqref{Cauchy_problem_solution2_fpd} converges absolutely for each  fixed $t>0$, $x \in S$, and \eqref{Cauchy_problem2_fpd} holds pointwise.
\end{theorem}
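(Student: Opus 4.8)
The plan is to reduce the Fokker--Planck problem \eqref{Cauchy_problem2_fpd} to the already-solved backward problem of Theorem~\ref{Cauchy_problem_Gen_streched} by exploiting the intertwining relation between the forward and backward operators. The key algebraic identity is
\begin{equation*}
\mathcal{L}[\boldsymbol{m}(x) f(x)] = \boldsymbol{m}(x)\, \mathcal{G}[f](x),
\end{equation*}
valid for every sufficiently smooth $f$. To verify it I would start from the stationary-density equation in Section~\ref{PearsonDiffusions}, namely $\boldsymbol{m}'/\boldsymbol{m} = (\mu - D')/D$, which rearranges to $(D\boldsymbol{m})' = \mu\boldsymbol{m}$; substituting this into $\mathcal{L}[\boldsymbol{m} f] = -(\mu\boldsymbol{m} f)' + (D\boldsymbol{m} f)''$ and expanding, the lower-order terms cancel and one is left precisely with $\boldsymbol{m}(\mu f' + D f'') = \boldsymbol{m}\,\mathcal{G}[f]$. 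Applying this to the eigenpolynomials and using \eqref{eigen.hg} gives the termwise statement $\mathcal{L}[\boldsymbol{m} Q_n] = -\lambda_n\, \boldsymbol{m} Q_n$.

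Next I would observe that the coefficients defined here are exactly the Fourier coefficients of $h/\boldsymbol{m}$ against the $\boldsymbol{m}$-orthonormal system, since
\begin{equation*}
a_n = \int_S h(y) Q_n(y)\, dy = \int_S \frac{h(y)}{\boldsymbol{m}(y)}\, Q_n(y)\,\boldsymbol{m}(y)\, dy.
\end{equation*}
Hence Theorem~\ref{Cauchy_problem_Gen_streched}, applied with initial datum $\tilde h = h/\boldsymbol{m} \in L_2(\boldsymbol{m}\,dx)$, already furnishes an absolutely convergent series $\tilde g_{\alpha,\gamma}(t,y)=\sum_n \operatorname{E}_{\alpha,1+\gamma/\alpha,\gamma/\alpha}(-\lambda_n t^{\alpha+\gamma})\,Q_n(y)\,a_n$ solving the backward problem pointwise. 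The claimed solution is then simply $g_{\alpha,\gamma}(t,x) = \boldsymbol{m}(x)\,\tilde g_{\alpha,\gamma}(t,x)$, so absolute convergence of \eqref{Cauchy_problem_solution2_fpd} for fixed $t>0$ and $x\in S$ is immediate: it is the convergent scalar $\tilde g_{\alpha,\gamma}(t,x)$ multiplied by the finite factor $\boldsymbol{m}(x)$.

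It then remains to check that $g_{\alpha,\gamma}=\boldsymbol{m}\,\tilde g_{\alpha,\gamma}$ satisfies \eqref{Cauchy_problem2_fpd}. On the time side, since $\boldsymbol{m}(x)$ is independent of $t$ the operator $\mathcal{D}^{(\alpha,\gamma)}_t$ passes through the factor $\boldsymbol{m}(x)$, giving $\mathcal{D}^{(\alpha,\gamma)}_t g_{\alpha,\gamma} = \boldsymbol{m}\,\mathcal{D}^{(\alpha,\gamma)}_t \tilde g_{\alpha,\gamma}$; by Theorem~\ref{Cauchy_problem_Gen_streched} the right-hand side equals $\boldsymbol{m}\,\mathcal{G}\tilde g_{\alpha,\gamma}$, and by the intertwining identity this is $\mathcal{L}[\boldsymbol{m}\,\tilde g_{\alpha,\gamma}]=\mathcal{L} g_{\alpha,\gamma}$, which is the desired equation. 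The initial condition follows from $\operatorname{E}_{\alpha,1+\gamma/\alpha,\gamma/\alpha}(0)=1$, whence $g_{\alpha,\gamma}(0,x)=\boldsymbol{m}(x)\sum_n a_n Q_n(x)=\boldsymbol{m}(x)\cdot\bigl(h(x)/\boldsymbol{m}(x)\bigr)=h(x)$, using the hypothesized uniform convergence of $\sum_n a_n Q_n$ to $h/\boldsymbol{m}$.

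The step I expect to be delicate is the passage $\mathcal{G}\tilde g_{\alpha,\gamma}=\sum_n \operatorname{E}_{\alpha,1+\gamma/\alpha,\gamma/\alpha}(-\lambda_n t^{\alpha+\gamma})\,\mathcal{G}[Q_n]\,a_n$, i.e. the justification that the second-order spatial operator may be applied termwise, and equivalently that $\mathcal{L}$ commutes with the summation defining $g_{\alpha,\gamma}$. This is precisely the technical content already secured in Theorem~\ref{Cauchy_problem_Gen_streched} through the bounds and asymptotics of the Kilbas--Saigo function \eqref{KS_function_bounds}--\eqref{KS_realz_asympt_spec}, which control the eigenvalue growth $\lambda_n$ and guarantee uniform convergence on finite intervals of the differentiated series. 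Thus no new analytic estimate is needed beyond confirming that multiplication by the fixed weight $\boldsymbol{m}$ (and the differentiations implicit in $\mathcal{L}$) preserves uniform convergence on finite subsets of $S$, after which the intertwining identity transfers the conclusion from $\mathcal{G}$ to $\mathcal{L}$.
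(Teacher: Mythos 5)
Your proposal is correct, and it packages the argument in a genuinely different way from the paper. The paper's own proof is deliberately terse: it simply defers to the termwise verification of [\cite{Pearson}, Theorem 3.3] and to the preceding theorem's reasoning, i.e.\ it uses the relation $\mathcal{L}[\boldsymbol{m}\,Q_n] = \boldsymbol{m}\,\mathcal{G}[Q_n] = -\lambda_n \boldsymbol{m}\,Q_n$ on each summand separately (exactly as the paper later does explicitly for the hyperbolic analogue, Theorem \ref{Cauchy_problem_FP}) and then re-justifies term-by-term application of $\mathcal{D}^{(\alpha,\gamma)}_t$ and of the spatial derivatives via uniform-convergence estimates, with the Kilbas--Saigo bounds \eqref{KS_function_bounds}--\eqref{KS_realz_asympt_spec} replacing the Mittag--Leffler bounds of \cite{Pearson}. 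You instead promote the same algebra to the level of the summed solution: the intertwining identity $\mathcal{L}[\boldsymbol{m}f] = \boldsymbol{m}\,\mathcal{G}[f]$, which you correctly derive from $(D\boldsymbol{m})' = \mu\boldsymbol{m}$, combined with the observation that $a_n = \int_S (h/\boldsymbol{m})\,Q_n\,\boldsymbol{m}\,dy$ are precisely the Fourier coefficients of $\tilde h = h/\boldsymbol{m}$ in the $\boldsymbol{m}$-orthonormal system, lets you invoke Theorem \ref{Cauchy_problem_Gen_streched} with datum $\tilde h$ (whose hypotheses are literally the hypotheses of the present theorem) and multiply the resulting backward solution by $\boldsymbol{m}(x)$. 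This buys real economy: absolute convergence of \eqref{Cauchy_problem_solution2_fpd} and the differentiability in $t$ and $x$ are inherited wholesale from the backward theorem, since $\boldsymbol{m}$ is smooth and strictly positive on the interior of $S$ and $\mathcal{D}^{(\alpha,\gamma)}_t$ commutes with the $t$-independent factor $\boldsymbol{m}(x)$. Indeed, your closing concern is unnecessary: once $\tilde g_{\alpha,\gamma}(t,\cdot)$ is known (from Theorem \ref{Cauchy_problem_Gen_streched}) to be twice differentiable and to satisfy the backward equation pointwise, the identity $\mathcal{L}[\boldsymbol{m}\,\tilde g_{\alpha,\gamma}] = \boldsymbol{m}\,\mathcal{G}[\tilde g_{\alpha,\gamma}]$ transfers the equation to $g_{\alpha,\gamma} = \boldsymbol{m}\,\tilde g_{\alpha,\gamma}$ by pure pointwise algebra, with no new convergence estimate required; the paper's route, by contrast, repeats (or cites) the five-series uniform-convergence checks for the forward problem directly.
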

\begin{proof}
	The proof is similar to that in [\cite{Pearson}, Theorem 3.3.] and the reasoning is the same as in the last theorem, therefore we skip it.
\end{proof}
\section{Fractional hyperbolic Pearson diffusion}\label{sec_fhpd}
In this section we provide model for a fractional hyperbolic Pearson diffusion which is based on the following fractional Cauchy problem with stretching:
\begin{equation}
\label{frac.hyperbolic.pearson}
A \left(\operatorname{\mathcal{D}}^{(\alpha,\gamma)}_t\right)^2 g(y,t)  + B 
\left(\operatorname{\mathcal{D}}^{(\alpha,\gamma)}_t\right) g(y,t)  = 
\mathcal{G}[g] , \quad g(y,0)=g(y),
\end{equation}
where we assume $A\geq 0$ and $B\geq 0$. Note that \eqref{frac.kolmo} corresponds to the particular case  of \eqref{frac.hyperbolic.pearson} with $A = 0$ and $\gamma = 0$. 
This model combines elements of telegraph equation (in time)  and Pearson diffusion equation (in space). We will assume $0<\alpha+\gamma \leq1$.

Let us look for solutions of \eqref{frac.hyperbolic.pearson}. Writing $g(y,t) = Y(y) T(t)$, we obtain 
\begin{equation}
\label{eq.T}
 A \left(\operatorname{\mathcal{D}}^{(\alpha,\gamma)}_t\right)^2 T(t)  + B 
\left(\operatorname{\mathcal{D}}^{(\alpha,\gamma)}_t\right) T(t) + \lambda T(t) = 0 
\end{equation}
and 
\begin{equation*}
\label{eq.Y}
\mathcal{G}[Y] = -\lambda Y . 
\end{equation*}
We have $Y(y) = Q_n(y)$, where $Q_n(y)$ denotes the classical orthogonal polynomials 
discussed in the previous section, and eigenvalue $\lambda = \lambda_n$ is given by 
\eqref{eq.eigenvalues}. 
As mentioned before we assume that the polynomials are properly normalized in order to have
\begin{equation*}
	\int Q_n(x) Q_m(x)\boldsymbol{m}(x)\, dx = \delta_{mn}. 
\end{equation*}
Notice that the equation \eqref{eq.T} takes the form \eqref{eq.2} with parameters $a = B/A$ and $b =\lambda_n/A$. 
Therefore, we have temporal solution 
\begin{equation}\label{temporal_solution}
T_n(t; \alpha, \gamma)= K_1(n) \, \operatorname{E}_{\alpha,1+\gamma/\alpha,\gamma/\alpha}\left(a^{*}_n t^{\alpha+\gamma}\right) + 
K_2(n) \, \operatorname{E}_{\alpha,1+\gamma/\alpha,\gamma/\alpha}\left(b^*_n t^{\alpha+\gamma}\right),
\end{equation}
where 
\begin{equation*} \label{a*b*_n}
	a^{*}_n=\frac{-B}{2A}+\sqrt{\left(\frac{-B}{2A}\right)^2-\frac{\lambda_n}{A}}, \quad  b^{*}_n=\frac{-B}{2A}-\sqrt{\left(\frac{-B}{2A}\right)^2-\frac{\lambda_n}{A}}
\end{equation*}
and
\begin{equation*}\label{K1K2_n}
	 K_1(n)=\frac{a^*_n -A^{-1}\lambda_n }{a^*_n(a^*_n-b^*_n)}, \quad K_2(n)=\frac{A^{-1}\lambda_n -b^*_n}{b^*_n (a^*_n-b^*_n)}
\end{equation*}
(compare with \eqref{K1K2} and  \eqref{a*b*}, where dependence on $n$ is through eigenvalues $\lambda_n$). Recall that $K_1(n)+K_2(n)=1$. Sequence $\left(\lambda_n, \, n \in \mathbb{N}_0\right)$ is increasing sequence of eigenvalues with $\lambda_n \to \infty$ as $n \to \infty$ for each considered Pearson diffusion. Therefore constants $a^*_n$ and $b^*_n$ become complex conjugate numbers after certain threshold ($B^2/(4A^2)-\lambda_n/A <0 $). 
Finally, heuristic arguments analogous to previous section lead to the  solution of \eqref{frac.hyperbolic.pearson}
\begin{equation}
	\label{frac.hyperbolic.pearson.solution}
g(y,t)=  \sum_{n=0}^\infty a_n T_n(t; \alpha, \gamma)
	Q_n(y),
\end{equation}
where
\begin{equation*}
a_n T_n(0; \alpha, \gamma)=a_n=\int Q_n(x)g(x)\boldsymbol{m}(x)dx, \quad g(y)=g(y,0).
\end{equation*}
Next step is to formally prove that the strong solution of Cauchy problem corresponding to \eqref{frac.hyperbolic.pearson} is given by \eqref{frac.hyperbolic.pearson.solution}.
Techniques used are similar to those in \cite{Pearson}. However, due to appearance of complex numbers through constants $K_1(n)$ and $K_2(n)$, as well as through KS function (via $a^*$ and $b^*$) we will need certain adaptions from techniques used for Mittag-Leffler function in \cite{Pearson}. On the other hand, the space part of the equation (due to generator of Pearson diffusion) remains unchanged.

Taking this into account, first step is the following theorem.

\begin{theorem}
For the three classes of Pearson diffusions (OU, CIR and Jacobi) whose stationary density $\boldsymbol{m}$ and orthogonal polynomials $\left(Q_n, \, n \in \mathbb{N}_0\right)$ are given in Section \ref{PearsonDiffusions}, for any $0<\alpha+\gamma\leq1$, the series
\begin{equation*} \label{ppear}
p^{H}_{\alpha, \gamma}(x,t;y)=\boldsymbol{m}(x)\sum_{n=0}^{\infty} T_n(t;\alpha, \gamma)Q_n(x)Q_n(y)
\end{equation*}
with $T_n(\cdot; \alpha, \gamma)$ defined via \eqref{temporal_solution} converges for fixed $t>0$ and $x$, $y$ in the state space of the corresponding diffusion (OU, CIR or Jacobi).
\end{theorem}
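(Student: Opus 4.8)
The plan is to establish absolute convergence of $\sum_{n=0}^\infty T_n(t;\alpha,\gamma)Q_n(x)Q_n(y)$ for fixed $t>0$ by combining a sharp decay estimate for the temporal factors $T_n$ with the orthonormal-polynomial bounds already used in \cite{Pearson}. First I would discard the finitely many indices $n$ for which $B^2/(4A^2)-\lambda_n/A\ge 0$, where $a_n^*,b_n^*$ are real (and both negative, so the corresponding $T_n$ are finite): since $\lambda_n\to\infty$, these form a finite set and contribute a finite sum, so only the tail matters. Throughout I assume $A,B>0$; the case $A=0$ is the parabolic problem already treated in Theorem \ref{Cauchy_problem_Gen_streched}, while $B=0$ is a genuine edge case addressed at the end.

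For large $n$ the roots are complex conjugates, $b_n^*=\overline{a_n^*}$ with $\operatorname{Re}a_n^*=-B/(2A)<0$, and correspondingly $K_2(n)=\overline{K_1(n)}$, so that $T_n(t;\alpha,\gamma)=2\operatorname{Re}\bigl(K_1(n)\operatorname{E}_{\alpha,1+\gamma/\alpha,\gamma/\alpha}(a_n^* t^{\alpha+\gamma})\bigr)$. Writing $z_n=-a_n^*t^{\alpha+\gamma}$, we have $\operatorname{Re}z_n=Bt^{\alpha+\gamma}/(2A)>0$ and $|z_n|=\sqrt{\lambda_n/A}\,t^{\alpha+\gamma}\to\infty$, so Theorem \ref{KS_asymtptocis_complex_z} applies. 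A key point is that its exponent equals $\delta=3/2$ for our parameters: indeed $\tau=1/(\alpha+\gamma)$ and $\varphi+a\tau=1+1/(\alpha+\gamma)\ge 2$ because $\alpha+\gamma\le 1$, whence $\delta=\frac{1}{2}(1+2)$. Expanding $\operatorname{E}_{\alpha,1+\gamma/\alpha,\gamma/\alpha}(-z_n)=\Gamma(1-\alpha)^{-1}z_n^{-1}+\mathcal{O}(|z_n|^{-3/2})$ and taking the real part, the leading contribution $\operatorname{Re}(K_1(n)z_n^{-1})$ simplifies — the $\mathcal{O}(\lambda_n^{-1/2})$ imaginary parts cancel — to $(A+B)/(2\lambda_n t^{\alpha+\gamma})=\mathcal{O}(\lambda_n^{-1})$, while the error contributes $\mathcal{O}(|z_n|^{-3/2})=\mathcal{O}(\lambda_n^{-3/4})$. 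Hence $|T_n(t;\alpha,\gamma)|\le C(t)\,\lambda_n^{-3/4}$ for all large $n$.

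It then remains to insert this bound into the series and use the same polynomial estimates underlying [\cite{Pearson}, Lemma 3.1]. Recalling the eigenvalue growth $\lambda_n\sim\theta n$ for OU and CIR and $\lambda_n\sim n^2$ for Jacobi, together with the Plancherel–Rotach bounds $|Q_n(x)|\le C(x)\,n^{-1/4}$ on compacts of the interior for OU and CIR and $|Q_n(x)|\le C(x)$ for Jacobi, the general summand $|T_n|\,|Q_n(x)|\,|Q_n(y)|$ is $\mathcal{O}(n^{-5/4})$ in the OU and CIR cases and $\mathcal{O}(n^{-3/2})$ in the Jacobi case; both are summable, which yields absolute convergence and lets us factor out $\boldsymbol{m}(x)$.

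The main obstacle is the decay estimate for $T_n$. The naive bound $|T_n|\le 2|K_1(n)|\,|\operatorname{E}(\cdot)|\lesssim\lambda_n^{-1/2}$ coming from the leading term of the asymptotic alone is \emph{not} enough for the OU and CIR families (it produces a borderline-divergent $\sum n^{-1}$); one genuinely needs the cancellation in the real part, and this is precisely the adaptation relative to the Mittag-Leffler argument of \cite{Pearson}, where no complex arguments appear. Care is also required at the boundary: when $B=0$ the roots lie on the imaginary axis and $\operatorname{Re}z_n=0$, so Theorem \ref{KS_asymtptocis_complex_z} must first be extended to the edge $|\arg z|=\pi/2$ of its sector of validity (admissible since $\alpha<1$) before the same estimate can be carried out.
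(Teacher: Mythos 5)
Your proposal is correct and follows essentially the same route as the paper's proof: apply Theorem \ref{KS_asymtptocis_complex_z} to the conjugate pair $a_n^{\ast}t^{\alpha+\gamma}$, $b_n^{\ast}t^{\alpha+\gamma}$, exploit the cancellation of the $\mathcal{O}(\lambda_n^{-1/2})$ contributions in $T_n$ (the paper realizes this via $1/a_n^{\ast}+1/b_n^{\ast}=-B/\lambda_n$ with $K_1(n)\sim K_2(n)\sim 1/2$, you via real-part extraction from $T_n=2\operatorname{Re}\bigl(K_1(n)\operatorname{E}_{\alpha,1+\gamma/\alpha,\gamma/\alpha}(a_n^{\ast}t^{\alpha+\gamma})\bigr)$ --- the same cancellation, differently packaged), and then sum the tail using the Hermite/Laguerre/Jacobi bounds imported from \cite{Pearson}. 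If anything, your bookkeeping is sharper than the paper's: you compute $\delta=3/2$ explicitly from \eqref{def.asympt.delta} (valid since $\varphi+a\tau=1+1/(\alpha+\gamma)\geq 2$ when $\alpha+\gamma\leq 1$) and retain the remainder honestly, giving $|T_n|=\mathcal{O}(\lambda_n^{-3/4})$ --- note that since $|z_n|\asymp\lambda_n^{1/2}$ the $\mathcal{O}(|z_n|^{-\delta})$ term actually dominates the $z_n^{-1}$ term, so the paper's asymptotic equivalence \eqref{Hyper_temporal_asym} slightly overstates what Theorem \ref{KS_asymtptocis_complex_z} delivers, whereas your weaker exponents $n^{-5/4}$ (OU, CIR) and $n^{-3/2}$ (Jacobi) are still summable and fully justified --- and you flag the edge cases $A=0$ (reduce to Theorem \ref{trans_dens_steched_Pearson}) and $B=0$ (where $\operatorname{Re}(-a_n^{\ast}t^{\alpha+\gamma})=0$, so the hypothesis of Theorem \ref{KS_asymtptocis_complex_z} genuinely fails and an extension to $|\arg z|=\pi/2$ would be needed), both of which the paper passes over in silence; your leading constant $(A+B)/(2\lambda_n t^{\alpha+\gamma})$ drops a factor $\Gamma(1-\alpha)$, but this, like the paper's own constant obtained by prematurely replacing $K_i(n)$ by $1/2$, is immaterial to convergence.
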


\begin{proof} 
Taking into account Theorem \ref{KS_asymtptocis_complex_z} for parameters $a=\alpha \in \langle 0, 1\rangle, m=1+\gamma/\alpha, l=\gamma/\alpha$ and $z=-a^*_n t^{\alpha+\gamma}$ we obtain 
\begin{equation*}
	\operatorname{E}_{\alpha,1+\gamma/\alpha,\gamma/\alpha}\left(a^*_n t^{\alpha+\gamma}\right) = -\frac{1}{\Gamma(1-\alpha) a^{*}_n t^{\alpha+\gamma}}+\mathcal{O}\left(|a^*_n t^{\alpha+\gamma}|^{-\delta}\right), \quad \text{ as } n \to \infty,
\end{equation*}
where $1<\delta\leq 3/2$. In particular, we have 
\begin{equation} \label{KS_asympt_an*}
\operatorname{E}_{\alpha,1+\gamma/\alpha,\gamma/\alpha}\left(a^*_n t^{\alpha+\gamma}\right) \sim -\frac{1}{\Gamma(1-\alpha) a^{*}_n t^{\alpha+\gamma}}, \quad \text{ as } n \to \infty.
\end{equation}
Conditions of the abovementioned theorem are fulfilled since
\begin{equation*}
 \quad 0 < \alpha <1, \alpha+\gamma \leq 1
\end{equation*}
and
\begin{equation*} \label{tau}
Re(a^*_n t^{\alpha+\gamma})=-\frac{B}{2A}t^{\alpha+\gamma}<0, \quad Im(a^*_n t^{\alpha+\gamma})=t^{\alpha+\gamma}\sqrt{\frac{\lambda_n}{A}-\left(-\frac{B}{2A}\right)^2} \to \infty, \quad \text{ as } n \to \infty.
\end{equation*}
Similarly,
\begin{equation} \label{KS_asympt_bn*}
	\operatorname{E}_{\alpha,1+\gamma/\alpha,\gamma/\alpha}\left(b^*_n t^{\alpha+\gamma}\right) \sim -\frac{1}{\Gamma(1-\alpha) b^{*}_n t^{\alpha+\gamma}}, \quad \text{ as } n \to \infty.
\end{equation}
since for $n$ large enough, $a^*_n$ and $b^*_n$ are complex conjugate numbers.
It is not hard to see that
\begin{equation} \label{K1K2_asymp}
K_1(n) \sim K_2(n) \sim \frac{1}{2}, \quad n \to \infty.
\end{equation}
Taking this into account together with \eqref{KS_asympt_an*} and \eqref{KS_asympt_bn*} we have
\begin{align}
T_n(t; \alpha, \gamma) &\sim \frac{1}{2}\left(-\frac{1}{\Gamma(1-\alpha) a^{*}_n t^{\alpha+\gamma}}-\frac{1}{\Gamma(1-\alpha) b^{*}_n t^{\alpha+\gamma}}\right) \nonumber \\ \nonumber
                       &\sim -\frac{1}{2\Gamma(1-\alpha)t^{\alpha+\gamma}}\left(\frac{1}{ a^{*}_n}+\frac{1}{ b^{*}_n }\right)\\ 
                       & \sim \frac{B}{2\Gamma(1-\alpha) \lambda_n t^{\alpha+\gamma}}, \quad n \to \infty. \label{Hyper_temporal_asym}
\end{align}

Rewriting 
\begin{equation} \label{series_rep_solution_n0}
	p^{H}_{\alpha, \gamma}(x,t;y)=\boldsymbol{m}(x)\left(\sum_{n=0}^{n_0 -1} T_n(t;\alpha, \gamma)Q_n(x)Q_n(y)+\sum_{n=n_0}^{\infty} T_n(t;\alpha, \gamma)Q_n(x)Q_n(y)\right),
\end{equation}
we simply need to prove convergence of the tail series (after $n_0$). 
Since the space part of the equation \eqref{frac.hyperbolic.pearson} involves generator of OU, CIR and Jacobi diffusion in the same manner as in \cite{Pearson} we can simply use bounds for corresponding eigenfunctions (orthonormal polynomials - Hermitte, Laguerre and Jacobi) obtained there:
\begin{itemize}
	\item OU case: $\lambda_n=\theta n$, $Q_n(x) \leq K \exp(x^2/4) n^{-1/4}\left(1+|x/\sqrt{2}|^{5/2}\right)$,
	\item CIR case: $\lambda_n=\theta n$, $Q_n(x)=\mathcal{O}\left(\frac{\exp(x/2)}{x^{(2b-1)/4}}n^{-1/4}\right)$,
	\item Jacobi case: $\lambda_n=\theta n  (n+a+b+1)/(a+b+2)$, $Q_n(x)=C(x,a,b)\cos(N \theta + c)+\mathcal{O}(n^{-1})$, where $x=\cos \theta, N=n+1/2(a+b+1), c=-(a+1/2)\pi/2$
\end{itemize}
(for details, see [\cite{Pearson}, p. 536-537]).

Combining these bounds with \eqref{Hyper_temporal_asym} we have for OU and CIR case

\begin{equation} \label{tail_series_OUCIR}
|T_n(t;\alpha, \gamma)Q_n(x)Q_n(y)|\leq \frac{C(x,y,\alpha, \gamma, t, n_0)}{n^{3/2}}, \quad n \geq n_0
\end{equation}
and for Jacobi case
\begin{equation} \label{tail_series_Jacobi}
	|T_n(t;\alpha, \gamma)Q_n(x)Q_n(y)|\leq \frac{C(x,y,\alpha, \gamma, t, n_0)\cos (N \theta + c)}{n^{2}}, \quad n \geq n_0,
\end{equation}
where the constants $C(x, y, \alpha, \gamma, t, n_0)$ are not necessarily the same for each diffusion and might also depend on corresponding infinitesimal parameters $\mu(x)$ and $\sigma(x)$ of the diffusion generator $\mathcal{G}$.
Finally, inequalities \eqref{tail_series_OUCIR} and \eqref{tail_series_Jacobi} show that the series \eqref{series_rep_solution_n0} is convergent for corresponding OU, CIR and Jacobi case, which completes the proof.
\end{proof}

In the next two theorems we provide strong solutions for the corresponding fractional Cauchy problems. The proofs are based on ideas in \cite{Pearson}, while the form of solutions can be foreseen from the heuristic argument in the last section.
\begin{theorem} \label{Cauchy_problem_Gen}
Let $h \in L_2(\boldsymbol{m}(x)dx)$ be such a function that $\sum_{n}  a_n Q_n$, where $$a_n=\int_{S} h(x) Q_n(x)\boldsymbol{m}(x) dx,$$ converges to function $h$ uniformly on finite intervals (subsets of state space $S$ of corresponding diffusion). Then the fractional Cauchy problem
\begin{equation} \label{Cauchy_problem}
 A\cdot \left(\mathcal{D}^{(\alpha,\gamma)}_t\right)^2 g(t,y)+B \cdot \mathcal{D}^{(\alpha,\gamma)}_t g(t,y)= \mathcal{G} g(t,y), \quad g(0,y)=h(y) 
\end{equation}
has a strong solution given by
\begin{equation} \label{Cauchy_problem_solution}
g^{H}_{\alpha, \gamma}(t,y)=\sum_{n=0}^{\infty} T_n(t; \alpha, \gamma)Q_n(y) a_n.
\end{equation}
The series in \eqref{Cauchy_problem_solution} converges absolutely for each  fixed $t>0$, $y \in S$, and \eqref{Cauchy_problem} holds pointwise.
\end{theorem}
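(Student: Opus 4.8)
The plan is to follow the spectral strategy of \cite{Pearson}, proving the claim in three stages: that every summand in \eqref{Cauchy_problem_solution} solves \eqref{Cauchy_problem}, that the series reproduces the initial datum, and---the delicate part---that the operators $\mathcal{G}$, $\mathcal{D}^{(\alpha,\gamma)}_t$ and $(\mathcal{D}^{(\alpha,\gamma)}_t)^2$ may be applied term by term.

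For the first stage I would insert a single mode $u_n(t,y):=a_n\,T_n(t;\alpha,\gamma)\,Q_n(y)$ into \eqref{Cauchy_problem}. Since $Q_n$ satisfies $\mathcal{G}Q_n=-\lambda_n Q_n$ with $\lambda_n$ as in \eqref{eq.eigenvalues}, and the temporal factor $T_n$ in \eqref{temporal_solution} was built precisely to solve \eqref{eq.T}, i.e.\ $A(\mathcal{D}^{(\alpha,\gamma)}_t)^2 T_n+B\,\mathcal{D}^{(\alpha,\gamma)}_t T_n=-\lambda_n T_n$, both sides of \eqref{Cauchy_problem} reduce to $-\lambda_n u_n$ and each mode is an exact solution. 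For the initial condition I would use that every Kilbas--Saigo factor equals $1$ at $t=0$, so $T_n(0;\alpha,\gamma)=K_1(n)+K_2(n)=1$; the hypothesis that $\sum_n a_n Q_n$ converges uniformly to $h$ then gives $g^{H}_{\alpha,\gamma}(0,y)=h(y)$.

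The heart of the proof is licensing the interchange of the nonlocal operators with the infinite sum. Because the Kilbas--Saigo function is an eigenfunction of $\mathcal{D}^{(\alpha,\gamma)}_t$ (Section \ref{first_order_KS}), I can write $\mathcal{D}^{(\alpha,\gamma)}_t T_n=K_1(n)a_n^{*}\operatorname{E}_{\ldots}(a_n^{*}t^{\alpha+\gamma})+K_2(n)b_n^{*}\operatorname{E}_{\ldots}(b_n^{*}t^{\alpha+\gamma})$, and the analogous formula with $(a_n^{*})^2,(b_n^{*})^2$ for the square. Feeding in the complex-argument asymptotics \eqref{KS_asympt_an*}, \eqref{KS_asympt_bn*} of Theorem \ref{KS_asymtptocis_complex_z} together with \eqref{K1K2_asymp}, the leading contributions of $\mathcal{D}^{(\alpha,\gamma)}_t T_n$, of $(\mathcal{D}^{(\alpha,\gamma)}_t)^2 T_n$ and of $\lambda_n T_n$ all turn out to be of the form $c(t)+o(1)$ as $n\to\infty$ with $c(t)$ independent of $n$; for the square this requires the cancellation of the a priori growing part, which comes from $a_n^{*}+b_n^{*}=-B/A$ (the sum of the roots $a_n^{*},b_n^{*}$). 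This is exactly what consistency of $A(\mathcal{D}^{(\alpha,\gamma)}_t)^2 T_n+B\,\mathcal{D}^{(\alpha,\gamma)}_t T_n=-\lambda_n T_n\sim\text{const}$ demands. I would therefore split each differentiated mode into its non-decaying leading part plus a remainder: the leading parts sum to a constant multiple of $\sum_n a_n Q_n(y)$, which converges to a multiple of $h(y)$ by the uniform-convergence hypothesis, while each remainder carries an extra factor $|a_n^{*}|^{1-\delta}=\mathcal{O}(\lambda_n^{(1-\delta)/2})$ with $\delta\in(1,3/2]$ and, combined with the eigenfunction bounds \eqref{tail_series_OUCIR}--\eqref{tail_series_Jacobi}, yields an absolutely convergent series uniformly on compact subsets of $(0,\infty)\times S$.

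The main obstacle is precisely this interchange: since the term-by-term fractional derivatives do not decay in $n$, naive absolute convergence fails and one cannot simply dominate. Instead I would first justify termwise differentiation in $t$ (uniform convergence of $\sum_n a_n T_n' Q_n$ away from $t=0$), then move the integral operator \eqref{Caputo_variation} inside the series by dominated convergence applied to its partial sums, treating the non-decaying leading contribution separately through the convergence of $\sum_n a_n Q_n=h$. Once the interchange is legitimate, the per-mode identity of the first stage propagates to the sum, giving $A(\mathcal{D}^{(\alpha,\gamma)}_t)^2 g^{H}_{\alpha,\gamma}+B\,\mathcal{D}^{(\alpha,\gamma)}_t g^{H}_{\alpha,\gamma}=\mathcal{G}g^{H}_{\alpha,\gamma}$ pointwise, and the absolute convergence established above completes the proof.
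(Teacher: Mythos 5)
Your proposal follows essentially the same route as the paper's proof: per-mode verification via the eigenfunction identities $\mathcal{G}Q_n=-\lambda_n Q_n$ and the temporal equation for $T_n(\cdot;\alpha,\gamma)$, the initial condition via $K_1(n)+K_2(n)=1$ together with the uniform-convergence hypothesis, and term-by-term application of $\mathcal{D}^{(\alpha,\gamma)}_t$ and its square justified through the complex-argument Kilbas--Saigo asymptotics of Theorem \ref{KS_asymtptocis_complex_z} with $K_1(n),K_2(n)\to 1/2$, the non-decaying leading contributions being absorbed into the assumed convergence of $\sum_n a_n Q_n$ and the spatial-derivative series handled exactly as in \cite{Pearson}. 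If anything, your explicit decomposition of each differentiated mode into an $n$-independent leading part plus a remainder controlled via $\mathcal{O}(|z|^{-\delta})$ and the root identity $a_n^{*}+b_n^{*}=-B/A$ spells out a cancellation that the paper's proof uses only implicitly in its terse justification of the same interchange.
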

\begin{proof}
First, recall that $Q_n(y)$ is eigenfunction of the infinitesimal generator $\mathcal{G}$ with corresponding eigenvalue $\lambda_n$. Second, recall $T_n(t, \alpha, \gamma)$ is a temporal solution of the fractional equation \eqref{eq.2} with $a=B/A$ and $b=\lambda_n/A$. Therefore
\begin{align*}
\mathcal{G}\left[T_n(t; \alpha, \gamma) Q_n(y) a_n\right] &= T_n(t; \alpha, \gamma)a_n \left[\mathcal{G}Q_n(y)\right] \\
&=T_n(t; \alpha, \gamma)a_n \left[-\lambda_n Q_n(y)\right] \\
&= Q_n(y) a_n \left[-\lambda_n  T_n(t; \alpha, \gamma)\right]\\
&=\left[ A\cdot \left(\mathcal{D}^{(\alpha,\gamma)}_t\right)^2+B \cdot \mathcal{D}^{(\alpha,\gamma)}_t\right] T_n(t; \alpha, \gamma) Q_n(y) a_n.
\end{align*}
Therefore, each term of the series \eqref{Cauchy_problem_solution} satisfies \eqref{Cauchy_problem}.
At last, we need to check that we can differentiate series \eqref{Cauchy_problem_solution} term by term. Taking into account classical results on this matter (e.g \cite{Rudin}, Theorems 7.16. and 7.17.) we need to check absolute and uniform convergence on finite intervals of the following series
\begin{equation*}
	 \left\{
\begin{aligned}
&\sum\limits_{n=0}^{\infty}\mathcal{D}^{(\alpha,\gamma)}_t T_n(t; \alpha, \gamma) Q_n(y) a_n \\
&\sum\limits_{n=0}^{\infty}\left(\mathcal{D}^{(\alpha,\gamma)}_t\right)^2 T_n(t; \alpha, \gamma) Q_n(y) a_n \\
&\sum\limits_{n=0}^{\infty} T_n(t; \alpha, \gamma) Q'_n(y) a_n \\
&\sum\limits_{n=0}^{\infty} T_n(t; \alpha, \gamma) Q^{''}_n(y) a_n \\
&\sum\limits_{n=0}^{\infty}  Q_n(y) a_n. \\
\end{aligned}
\right.
\end{equation*}
Last series is convergent due to the assumption. Regarding the series involving operator \eqref{Caputo_variation} $\mathcal{D}^{(\alpha,\gamma)}_t$ first notice that
\begin{align*}
\mathcal{D}^{(\alpha,\gamma)}_t T_n(t; \alpha, \gamma)&=\mathcal{D}^{(\alpha,\gamma)}_t \left( K_1(n) \, \operatorname{E}_{\alpha,1+\gamma/\alpha,\gamma/\alpha}\left(a^{*}_n t^{\alpha+\gamma}\right) + 
K_2(n) \, \operatorname{E}_{\alpha,1+\gamma/\alpha,\gamma/\alpha}\left(b^*_n t^{\alpha+\gamma}\right)\right) \\
&= K_1(n) \, (-a^*_n) \operatorname{E}_{\alpha,1+\gamma/\alpha,\gamma/\alpha}\left(a^{*}_n t^{\alpha+\gamma}\right) + 
K_2(n) \, (-b^*_n) \operatorname{E}_{\alpha,1+\gamma/\alpha,\gamma/\alpha}\left(b^*_n t^{\alpha+\gamma}\right),
\end{align*} 
and 
\begin{equation*}
	\left(\mathcal{D}^{(\alpha,\gamma)}_t\right)^2 T_n(t; \alpha, \gamma)=K_1(n) \, (a^*_n)^2 \operatorname{E}_{\alpha,1+\gamma/\alpha,\gamma/\alpha}\left(a^{*}_n t^{\alpha+\gamma}\right) + 
	K_2(n) \, (b^*_n)^2 \operatorname{E}_{\alpha,1+\gamma/\alpha,\gamma/\alpha}\left(b^*_n t^{\alpha+\gamma}\right),
\end{equation*} 
where we have used the fact $\operatorname{E}_{\alpha,1+\gamma/\alpha,\gamma/\alpha}\left(-\kappa t^{\alpha+\gamma}\right)$ is the solution of eigenvalue problem \eqref{fde.1}.

Recalling asymptotic formulas \eqref{KS_asympt_an*} and \eqref{K1K2_asymp}  it follows
\begin{equation*}
\mathcal{D}^{(\alpha,\gamma)}_t T_n(t; \alpha, \gamma) \sim \frac{1}{\Gamma(1-\alpha)t^{\alpha+\gamma}}, \quad n \to \infty
\end{equation*}
and
\begin{equation*}
\left(\mathcal{D}^{(\alpha,\gamma)}_t\right)^2 T_n(t; \alpha, \gamma) \sim -\frac{B}{2A}\cdot\frac{1}{\Gamma(1-\alpha)t^{\alpha+\gamma}}, \quad n \to \infty.
\end{equation*}
Therefore, both series $\sum\limits_{n=0}^{\infty}\left(\mathcal{D}^{(\alpha,\gamma)}_t\right)^2  T_n(t; \alpha, \gamma) Q_n(y) a_n$ and $\sum\limits_{n=0}^{\infty}\mathcal{D}^{(\alpha,\gamma)}_t T_n(t; \alpha, \gamma) Q_n(y) a_n$ are convergent due to convergence assumption of the series $\sum\limits_{n=0}^{\infty} Q_n(y) a_n$.
For the series involving first and second order derivatives of eigenfunctions the proof is analogous as in the proof of [Theorem 3.2, \cite{Pearson}] since temporal solution $T_n(t; \alpha, \gamma)$ is of order $\mathcal{O}(\lambda^{-1}_n t^{-\alpha+\gamma})$. 
Therefore, we conclude that we can differentiate series \eqref{Cauchy_problem_solution} term by term for all three diffusions and it satisfies corresponding fractional equation given in \eqref{Cauchy_problem}.
Lastly, the initial condition is fulfilled since 
\begin{align*}
	g^{H}_{\alpha, \gamma}(0, y)&=\sum_{n=0}^{\infty}T_n(0; \alpha, \gamma)Q_n(y)a_n \\
	&=\sum_{n=0}^{\infty}\left(K_1(n)+K_2(n)\right)Q_n(y)a_n \\
	&=\sum_{n=0}^{\infty}Q_n(y)a_n \\
	&=h(y).
\end{align*}
\end{proof}

\begin{theorem}\label{Cauchy_problem_FP}
	Let $h/\boldsymbol{m}(x) \in L_2(\boldsymbol{m}(x)dx)$ be such a function that $\sum_{n} a_n Q_n$, where $$a_n=\int_{S} h(y) Q_n(y) dy,$$ converges to function $h/\boldsymbol{m}$ uniformly on finite intervals (subsets of state space $S$ of corresponding diffusion). Then the fractional Cauchy problem
	\begin{equation} \label{Cauchy_problem2}
		A\cdot \left(\mathcal{D}^{(\alpha,\gamma)}_t\right)^2 g(t,x)+B \cdot \mathcal{D}^{(\alpha,\gamma)}_t g(t,x)= \mathcal{L} g(t,x), \quad g(0,x)=h(x) 
	\end{equation}
	has a strong solution given by
	\begin{equation} \label{Cauchy_problem_solution2}
		g^{H}_{\alpha, \gamma}(t,x)=\boldsymbol{m}(x)\sum_{n=0}^{\infty} T_n(t; \alpha, \gamma)Q_n(x) a_n.
	\end{equation}
	The series in \eqref{Cauchy_problem_solution2} converges absolutely for each  fixed $t>0$, $x \in S$, and \eqref{Cauchy_problem2} holds pointwise.
\end{theorem}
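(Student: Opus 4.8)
The plan is to follow the proof of Theorem~\ref{Cauchy_problem_Gen} almost verbatim, replacing the backward operator $\mathcal{G}$ by the Fokker--Planck operator $\mathcal{L}$ and the eigenfunctions $Q_n$ by the weighted eigenfunctions $\boldsymbol{m}(x)Q_n(x)$. The structural fact that makes this substitution work is the adjointness of $\mathcal{L}$ and $\mathcal{G}$ with respect to the stationary weight, namely
\begin{equation*}
\mathcal{L}\bigl[\boldsymbol{m}(x)\,\phi(x)\bigr] = \boldsymbol{m}(x)\,\mathcal{G}[\phi](x),
\end{equation*}
which follows directly from the definitions of $\mathcal{L}$ and $\mathcal{G}$ together with the stationarity equation $\mathcal{L}[\boldsymbol{m}]=0$. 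In particular, taking $\phi=Q_n$ gives $\mathcal{L}[\boldsymbol{m}Q_n]=-\lambda_n\,\boldsymbol{m}Q_n$, so the weighted polynomials are eigenfunctions of $\mathcal{L}$ with the same eigenvalues $\lambda_n$ that govern the temporal equation.

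First I would check that each summand of \eqref{Cauchy_problem_solution2} solves \eqref{Cauchy_problem2}. Since $T_n(t;\alpha,\gamma)$ solves the temporal equation \eqref{eq.T} (equivalently \eqref{eq.2}) with $a=B/A$ and $b=\lambda_n/A$, one has $\bigl[A(\mathcal{D}^{(\alpha,\gamma)}_t)^2+B\,\mathcal{D}^{(\alpha,\gamma)}_t\bigr]T_n=-\lambda_n T_n$; combining this with $\mathcal{L}[\boldsymbol{m}Q_n]=-\lambda_n\boldsymbol{m}Q_n$ shows that both sides of \eqref{Cauchy_problem2} reduce to $-\lambda_n\,\boldsymbol{m}(x)T_n(t;\alpha,\gamma)Q_n(x)a_n$ term by term.

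Next I would justify interchanging the operators with the infinite sum, again invoking the classical criterion of [\cite{Rudin}, Theorems~7.16 and~7.17]. The two temporal series $\sum_n \mathcal{D}^{(\alpha,\gamma)}_t T_n\,\boldsymbol{m}Q_n a_n$ and $\sum_n (\mathcal{D}^{(\alpha,\gamma)}_t)^2 T_n\,\boldsymbol{m}Q_n a_n$ are handled exactly as in Theorem~\ref{Cauchy_problem_Gen}: the asymptotics \eqref{KS_asympt_an*}, \eqref{KS_asympt_bn*} and \eqref{K1K2_asymp} give $\mathcal{D}^{(\alpha,\gamma)}_t T_n=\mathcal{O}(1)$ and $(\mathcal{D}^{(\alpha,\gamma)}_t)^2 T_n=\mathcal{O}(1)$ in $n$, so these series converge whenever $\sum_n \boldsymbol{m}(x)Q_n(x)a_n$ does, which holds by hypothesis (with limit $h$). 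The only genuinely new estimates are for the spatial series $\sum_n T_n\,(\boldsymbol{m}Q_n)'a_n$ and $\sum_n T_n\,(\boldsymbol{m}Q_n)''a_n$; here I would use the growth bounds on the weighted eigenfunctions $\boldsymbol{m}Q_n$ and their first two $x$-derivatives established in the Fokker--Planck setting of [\cite{Pearson}, Theorem~3.3], which together with the decay $T_n=\mathcal{O}(\lambda_n^{-1}t^{-(\alpha+\gamma)})$ from \eqref{Hyper_temporal_asym} produce locally uniform majorants of the same $n^{-3/2}$ (OU, CIR) and $n^{-2}$ (Jacobi) order as before.

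Finally, the initial condition is immediate from $K_1(n)+K_2(n)=1$, which gives $T_n(0;\alpha,\gamma)=1$ and hence
\begin{equation*}
g^{H}_{\alpha,\gamma}(0,x)=\boldsymbol{m}(x)\sum_{n=0}^{\infty}Q_n(x)a_n=\boldsymbol{m}(x)\cdot\frac{h(x)}{\boldsymbol{m}(x)}=h(x),
\end{equation*}
using the assumed convergence of $\sum_n a_nQ_n$ to $h/\boldsymbol{m}$. I expect the main obstacle to be the spatial step: securing the bounds on $(\boldsymbol{m}Q_n)'$ and $(\boldsymbol{m}Q_n)''$ with enough uniformity in $x$ on finite intervals to dominate the series for all three diffusion classes. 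Once these are quoted from \cite{Pearson}, everything else is a routine transcription of the argument for Theorem~\ref{Cauchy_problem_Gen}, and for this reason the write-up can reasonably be abbreviated.
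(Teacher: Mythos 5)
Your proposal is correct and follows essentially the same route as the paper: verifying that each summand solves \eqref{Cauchy_problem2} via the intertwining identity $\mathcal{L}[\boldsymbol{m}Q_n]=\boldsymbol{m}\,\mathcal{G}[Q_n]=-\lambda_n\boldsymbol{m}Q_n$ together with the temporal equation for $T_n$, justifying term-by-term application of the operators through the Rudin uniform-convergence criterion with the Kilbas--Saigo asymptotics \eqref{KS_asympt_an*}, \eqref{K1K2_asymp} and the spatial bounds quoted from \cite{Pearson}, and recovering the initial condition from $K_1(n)+K_2(n)=1$. If anything, your write-up is slightly more explicit than the paper's (which defers to the preceding theorem for the convergence details), e.g.~in correctly listing the series with $(\boldsymbol{m}Q_n)'$ and $(\boldsymbol{m}Q_n)''$ and in stating the decay $T_n=\mathcal{O}\bigl(\lambda_n^{-1}t^{-(\alpha+\gamma)}\bigr)$.
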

\begin{proof}
First notice
\begin{align*}
\mathcal{L}\left(\boldsymbol{m}(x) T_n(t; \alpha, \gamma)Q_n(x) a_n\right)&=T_n(t; \alpha, \gamma) a_n\left[-\frac{d}{dx}\left(\mu(x)\boldsymbol{m}(x)Q_n(x)\right)+\frac{d}{d x^2}\left(\sigma^2(x)\boldsymbol{m}(x)Q_n(x)/2\right)\right] \\
&=T_n(t; \alpha, \gamma) \cdot a_n \cdot \boldsymbol{m}(x) \cdot\mathcal{G}Q_n(x) \\
&=-\lambda_n T_n(t; \alpha, \gamma) \cdot a_n \cdot \boldsymbol{m}(x)\cdot Q_n(x) \\
&=\left[A\cdot \left(\mathcal{D}^{(\alpha,\gamma)}_t\right)^2 g(t,y)+B \cdot \mathcal{D}^{(\alpha,\gamma)}_t \right]\boldsymbol{m}(x) T_n(t; \alpha, \gamma)Q_n(x) a_n.
\end{align*}
We conclude that each term of the series \eqref{Cauchy_problem_solution2} satisfies equation \eqref{Cauchy_problem2}. Next, we need to prove that we can differentiate the series term by term, i.e. we need to check absolute and uniform convergence of the following series:
\begin{equation*}
	\left\{
	\begin{aligned}
		&\sum\limits_{n=0}^{\infty}\mathcal{D}^{(\alpha,\gamma)}_t T_n(t; \alpha, \gamma) Q_n(x) a_n \\
		&\sum\limits_{n=0}^{\infty}\left(\mathcal{D}^{(\alpha,\gamma)}_t\right)^2 T_n(t; \alpha, \gamma) Q_n(x) a_n \\
		&\sum\limits_{n=0}^{\infty} T_n(t; \alpha, \gamma) Q'_n(x) a_n \\
		&\sum\limits_{n=0}^{\infty} T_n(t; \alpha, \gamma) Q^{''}_n(x) a_n \\
		&\sum\limits_{n=0}^{\infty} Q_n(x) a_n. \\
	\end{aligned}
	\right.
\end{equation*}
The proof follows similarly to that of the previous theorem, and as such, we omit its details. Finally, the initial condition is fulfilled since
\begin{align*}
	g^{H}_{\alpha, \gamma}(0, y)&=\boldsymbol{m}(x)\sum_{n=0}^{\infty}T_n(0; \alpha, \gamma)Q_n(x)a_n \\
	&=\boldsymbol{m}(x)\sum_{n=0}^{\infty}\left(K_1(n)+K_2(n)\right)Q_n(x)a_n \\
	&=\boldsymbol{m}(x)\sum_{n=0}^{\infty}Q_n(x)a_n \\
	&=\boldsymbol{m}(x) \left(h(x)/\boldsymbol{m}(x)\right) \\
	&=h(x).
\end{align*}
\end{proof}

\begin{remark}
It should be noted that the solutions of the last two theorems are unique. Indeed, let $g_1$ and $g_2$ be different solutions of the corresponding fractional Cauchy problem. Then by the initial condition $g_1(0,z)=g_2(0,z)=h(z)$. Since $g=g_1-g_2$ is also a solution with initial point condition $g(0,z)=g_1(0,z)-g_2(0,z)=h(z)-h(z)=0$ this implies every $a_n=0$ and so $g\equiv0$, providing desired conclusion.
\end{remark}
\begin{remark}
Last three theorems can bee seen as extension of the Theorems 3.1. - 3.3. in \cite{Pearson}. In order to see that set $\gamma=0, A=0, B=1$ in \eqref{frac.hyperbolic.pearson}.
\end{remark}

\section{Stochastic representation}\label{Stoch_rep_sec}
In this section we provide stochastic representations of analytical solutions of Cauchy problems from last two sections.

From [\cite{Boudabsa}, equation (3.1)], the following representation of the Kilbas-Saigo function holds, for $\alpha \in (0,1)$, $m>0$ and for any $z \in \mathbb{C}$:

\begin{equation*}
	\operatorname{E}_{\alpha, m, m-1}(z)=\mathbb{E}\exp\left(z \int_{0}^\infty \left(1-\sigma^{\alpha}_t\right)^{\alpha(m-1)}_{+}dt\right),
\end{equation*}
where $\left(\sigma^{\alpha}_t, t \geq 0\right)$ is $\alpha$-stable subordinator such that
\begin{equation*}
	\mathbb{E}\exp(-\lambda \sigma^{\alpha}_t)=\exp(-t \lambda^\alpha), \quad t \geq 0.
\end{equation*}
Therefore, we can see Kilbas-Saigo function as Laplace transform of some nonnegative variable, however not in the form of inverse subordinator as we might expect.
	Thus we can write, for $\alpha \in (0,1)$ and $\gamma > -\alpha$, that
\begin{equation}\label{epr}
	\operatorname{E}_{\alpha,1+\gamma/\alpha,\gamma/\alpha}\left(-\lambda t^{\alpha+\gamma}\right)=\mathbb{E}\exp\left(-\lambda t^{\alpha+\gamma}\int_{0}^\infty (1-\sigma_s^\alpha)_+^{\gamma}ds\right)=\mathbb{E}\exp\left(-\lambda Z^{(\alpha,\gamma)}_t\right),
\end{equation}
where the random process $(Z^{(\alpha,\gamma)}_t, t \geq 0)$ is defined as $Z^{(\alpha,\gamma)}_t:=t^{\alpha+\gamma}Z$, $t \geq0,$ for the r.v. $Z:=\int_{0}^\infty (1-\sigma_t^\alpha)_+^{\gamma}ds$. Moreover, the following equality in distribution is proved in \cite{Simon} for $Z$, in terms of infinite independent product of Beta r.v.'s 
$\mathcal{B}(a,b)$: 

\begin{equation*}
	Z \overset{d}{=}\frac{\Gamma(\gamma+1)}{\Gamma(\alpha+\gamma+1)}\prod_{n=0}^\infty \frac{\gamma +n+1}{\alpha+\gamma+n}\mathcal{B}\left(1+\frac{n}{\alpha+\gamma},\frac{1-\alpha}{\alpha+\gamma}\right),\label{beta}
\end{equation*}
which, for $\gamma=0$ and $\alpha \in (0,1)$, reduces to the following equality in distribution: $Z \overset{d}{=}\mathcal{L}_1^{\alpha}$, where $(\mathcal{L}_t^{\alpha},t \geq 0)$ is the inverse of $\sigma_t^\alpha$.	
\begin{definition}\label{defpear}
	Let us denote by $\left(X_t, \,t \geq 0\right)$ the standard Pearson diffusion with transition density $\boldsymbol{p}(x,t;y)$  
	given in \eqref{denpear}, for $x,y \in \mathbb{R}$ and $t \geq 0$. Then we define the stretched non-local Pearson diffusion $\left(X^{({\alpha,\gamma})}_t, \,t \geq 0\right)$ as 
	\begin{equation} 
		X^{({\alpha,\gamma})}_t:=X_{Z^{(\alpha,\gamma)}_t}, \qquad t \geq 0, \label{ed}
	\end{equation}
	under the assumption that $X_t$ and $Z$ are independent for every $t$.
\end{definition}
 The process $\left(X^{({\alpha,\gamma})}_t, \, t \geq 0\right)$ is well-defined according to Def. \ref{defpear}, since $\left(Z^{(\alpha,\gamma)}_t ,\, t \geq 0\right)$ is, by definition, non-decreasing almost surely and such that $Z^{(\alpha,\gamma)}_0\overset{a.s.}{=}0.$ Since the process defined in \eqref{ed} is non-Markovian, we say that it has transition density $\boldsymbol{p}_{\alpha,\gamma}(x,t;y)$ if
\[
P\left(\left.X^{({\alpha,\gamma})}_t \in B \right\vert X^{({\alpha,\gamma})}_0=y\right)=\int_B \boldsymbol{p}_{\alpha,\gamma}(x,t;y)dx, 
\]
for any Borel set $B$. Note that we have denoted by $X_t$ the process $X^{(1,0)}_t$, for brevity.

Now, interesting particular cases of our model
\begin{equation*}
	A \left(\operatorname{\mathcal{D}}^{(\alpha,\gamma)}_t\right)^2 g(y,t)  + B 
	\left(\operatorname{\mathcal{D}}^{(\alpha,\gamma)}_t\right) g(y,t)  = 
	\mathcal{G}[g] 
\end{equation*}
are
\begin{itemize}
	\item $A=0, B=1, \gamma=0$ leading to fractional (Pearson) diffusion model (see \cite{Pearson})
	\item $A=0, B=1, \gamma>0$ leading to new time changed diffusion model via Kilbas-Saigo function (see Section \ref{sec_fpd} for analytical solution)
	\item $A=1, B=2h>0$  leading to results based on spectral theory of infinitesimal generator of the corresponding diffusion, where time-part of the equation resembles (fractional) telegraph equation, while space-part of the equation is governed by Pearson diffusion (see Section \ref{sec_fhpd} for analytical solution). This should be compared to the d'Alembert formula for fractional telegraph equation (see \cite{frac_dAlamb2} and \cite{frac_dAlamb}).
\end{itemize}
The next step is to provide stochastic representation of the obtained analytical solutions of the proposed model.
\subsection{Stochastic representation for stretched non-local Pearson diffusion}

\begin{theorem}
	The transition density of the stretched non-local Pearson diffusion $(X^{(\alpha,\gamma)}_t,t \geq 0)$ coincides with  $\boldsymbol{p}_{\alpha, \gamma}(x,t;y)$ given in \eqref{trans_fpd}.
\end{theorem}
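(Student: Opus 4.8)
The plan is to compute the transition density of the time-changed process $X^{(\alpha,\gamma)}_t = X_{Z^{(\alpha,\gamma)}_t}$ by conditioning on the independent time-change, and then to recognize the resulting expectation as the Kilbas–Saigo function through the Laplace-transform identity \eqref{epr}.

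First I would exploit the independence of $\left(X_t\right)$ and $Z$ assumed in Definition \ref{defpear}, together with $X^{(\alpha,\gamma)}_0 = X_{Z^{(\alpha,\gamma)}_0} = X_0$ since $Z^{(\alpha,\gamma)}_0 \overset{a.s.}{=} 0$. For a Borel set $B$, conditioning on the value of $Z^{(\alpha,\gamma)}_t$ and using Fubini to swap the $dx$ and $d\tau$ integrations gives
\begin{equation*}
P\left(\left.X^{(\alpha,\gamma)}_t \in B \right\vert X^{(\alpha,\gamma)}_0 = y\right)
= \int_0^\infty \left(\int_B \boldsymbol{p}(x,\tau;y)\, dx\right) P\left(Z^{(\alpha,\gamma)}_t \in d\tau\right)
= \int_B \mathbb{E}\left[\boldsymbol{p}\left(x, Z^{(\alpha,\gamma)}_t; y\right)\right] dx ,
\end{equation*}
so that $\boldsymbol{p}_{\alpha,\gamma}(x,t;y) = \mathbb{E}\left[\boldsymbol{p}\left(x, Z^{(\alpha,\gamma)}_t; y\right)\right]$. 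Next I would insert the spectral representation \eqref{denpear} of the standard Pearson transition density, which for the normalized polynomials and $T_n(\tau) = \operatorname{e}^{-\lambda_n \tau}$ reads $\boldsymbol{p}(x,\tau;y) = \boldsymbol{m}(x)\sum_{n=0}^\infty \operatorname{e}^{-\lambda_n \tau} Q_n(x)Q_n(y)$, evaluated at the random time $\tau = Z^{(\alpha,\gamma)}_t$.

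The crux is to interchange the expectation with the infinite sum. I would justify this by Fubini's theorem after checking absolute convergence: since $\operatorname{e}^{-\lambda_n Z^{(\alpha,\gamma)}_t} \geq 0$, Tonelli's theorem combined with \eqref{epr} applied with $\lambda = \lambda_n$ yields
\begin{equation*}
\mathbb{E}\left[\sum_{n=0}^\infty \operatorname{e}^{-\lambda_n Z^{(\alpha,\gamma)}_t}\left|Q_n(x)Q_n(y)\right|\right]
= \sum_{n=0}^\infty \operatorname{E}_{\alpha,1+\gamma/\alpha,\gamma/\alpha}\left(-\lambda_n t^{\alpha+\gamma}\right)\left|Q_n(x)Q_n(y)\right| ,
\end{equation*}
and the finiteness of this last series is precisely the absolute convergence already established in Theorem \ref{trans_dens_steched_Pearson}, whose proof controls the tail through the $1/\lambda_n$ decay of the Kilbas–Saigo function and the eigenfunction bounds for the Hermite, Laguerre and Jacobi polynomials. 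This interchange is the step I expect to be the main obstacle, though it is essentially subsumed by the convergence analysis of Theorem \ref{trans_dens_steched_Pearson}. Having justified the swap, applying \eqref{epr} term by term, $\mathbb{E}\left[\operatorname{e}^{-\lambda_n Z^{(\alpha,\gamma)}_t}\right] = \operatorname{E}_{\alpha,1+\gamma/\alpha,\gamma/\alpha}\left(-\lambda_n t^{\alpha+\gamma}\right)$, gives
\begin{equation*}
\boldsymbol{p}_{\alpha,\gamma}(x,t;y) = \boldsymbol{m}(x)\sum_{n=0}^\infty \operatorname{E}_{\alpha,1+\gamma/\alpha,\gamma/\alpha}\left(-\lambda_n t^{\alpha+\gamma}\right) Q_n(x)Q_n(y) ,
\end{equation*}
which is exactly $p_{\alpha,\gamma}(x,t;y)$ in \eqref{trans_fpd}, completing the identification.
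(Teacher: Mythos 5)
Your proposal is correct and follows essentially the same route as the paper's own proof: condition on the independent time-change $Z^{(\alpha,\gamma)}_t$, insert the spectral representation \eqref{denpear}, apply the Laplace-transform identity \eqref{epr} with $\lambda = \lambda_n$ term by term, and justify the interchange of expectation and sum via a Fubini/Tonelli argument resting on the convergence established in Theorem \ref{trans_dens_steched_Pearson}. The only difference is cosmetic: the paper writes the expectation explicitly through the density $f_t(\tau)$ of $Z^{(\alpha,\gamma)}_t$, whereas you work directly with expectation notation.
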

\begin{proof}
	Let us denote by $f_t(\cdot)$ the density of $Z^{(\alpha,\gamma)}_t$, for any $t \geq 0$, then, by considering \eqref{ed} and by the Fubini theorem, we can write that 
	\begin{eqnarray}\label{Fubini_FPD}
		P\left(\left.X^{({\alpha,\gamma})}_t \in B \right\vert X^{({\alpha,\gamma})}_0=y\right)&=&\int_0^\infty P\left(\left.X_\tau \in B \right\vert X_0=y\right) f_t(\tau)d\tau \label{inter} \\
		&=& \int_0^\infty \int_B\boldsymbol{p}(x,\tau;y)f_t(\tau)dxd\tau \notag \\
		&=&\int_B \boldsymbol{m}(x)\sum_{n=0}^{\infty} Q_n(x)Q_n(y) \int_0^\infty \exp(-\lambda_n \tau)f_t(\tau)d\tau dx, \notag \\
		&=&\int_B \boldsymbol{m}(x)\sum_{n=0}^{\infty} \operatorname{E}_{\alpha,1+\gamma/\alpha,\gamma/\alpha}\left(-\lambda_n t^{\alpha+\gamma}\right) Q_n(x)Q_n(y) dx, \notag \\
		&=&  \int_B\boldsymbol{p}_{\alpha,\gamma}(x,t;y) dx, \notag
	\end{eqnarray}
	where in the third equality we have used the spectral representation of transition density of Pearson diffusion  \eqref{denpear}, while in  fourth equality we have used Laplace transform \eqref{epr}.
The interchange of integral and sum in \eqref{Fubini_FPD} can be justified by Fubini argument by taking into account Theorem \ref{trans_dens_steched_Pearson}.
\end{proof}
Next two theorems provide stochastic representation results for Theorem \ref{Cauchy_problem_Gen_streched} and Theorem \ref{Cauchy_problem_FP_streched}, i.e.~for fractional Cauchy problems \eqref{Cauchy_problem_fpd} and \eqref{Cauchy_problem2_fpd}.

\begin{theorem}
For any function $h$ that satisfies conditions of Theorem \ref{Cauchy_problem_Gen_streched}, the function
\begin{equation*}
g_{\alpha, \gamma}^{H}(t,y)=\mathbb{E} \left[h\left(X_t^{(\alpha, \gamma)}\right)\, \Big| \, X_0^{(\alpha, \gamma)}=y\right]
\end{equation*}
solves the fractional Cauchy problem \eqref{Cauchy_problem_fpd}.
\end{theorem}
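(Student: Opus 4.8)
The plan is to exploit the subordination structure from Definition~\ref{defpear} together with the spectral representation of the underlying Pearson diffusion, mirroring the Fubini argument already used for the transition density. Let $f_t(\cdot)$ denote the density of the time-change $Z^{(\alpha,\gamma)}_t$. First I would condition on $Z^{(\alpha,\gamma)}_t$ and use the independence of $X$ and $Z$ assumed in \eqref{ed} to write
\begin{equation*}
g_{\alpha,\gamma}^{H}(t,y) = \int_0^\infty \mathbb{E}\left[h(X_\tau)\mid X_0=y\right] f_t(\tau)\, d\tau .
\end{equation*}

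Next I would expand the inner conditional expectation by means of \eqref{eq.g.def} and the spectral representation \eqref{denpear}, which for the standard diffusion reads $\boldsymbol{p}(x,\tau;y)=\boldsymbol{m}(x)\sum_{n=0}^\infty e^{-\lambda_n\tau}Q_n(x)Q_n(y)$. Integrating against $h$ and invoking the orthonormality of the polynomials $Q_n$, this collapses to
\begin{equation*}
\mathbb{E}\left[h(X_\tau)\mid X_0=y\right] = \sum_{n=0}^\infty e^{-\lambda_n \tau}\, Q_n(y)\, a_n ,
\end{equation*}
with $a_n = \int_S h(x) Q_n(x)\boldsymbol{m}(x)\,dx$ exactly as in Theorem~\ref{Cauchy_problem_Gen_streched}. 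Substituting this back and interchanging summation with integration, each term produces $\int_0^\infty e^{-\lambda_n\tau} f_t(\tau)\,d\tau = \mathbb{E}\exp(-\lambda_n Z^{(\alpha,\gamma)}_t)$, which by the Laplace-transform identity \eqref{epr} equals $\operatorname{E}_{\alpha,1+\gamma/\alpha,\gamma/\alpha}(-\lambda_n t^{\alpha+\gamma})$. Hence $g_{\alpha,\gamma}^{H}$ coincides with the analytic solution \eqref{Cauchy_problem_solution_fpd}, which Theorem~\ref{Cauchy_problem_Gen_streched} already identifies as the strong solution of \eqref{Cauchy_problem_fpd}; this identification completes the argument.

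The step I expect to be the main obstacle is the rigorous justification of the interchange of the infinite sum with the integral against $f_t$. I would handle it exactly as in the proof of the transition-density representation: applying Tonelli to the absolute values reduces the matter to the finiteness of $\sum_n |a_n Q_n(y)|\,\operatorname{E}_{\alpha,1+\gamma/\alpha,\gamma/\alpha}(-\lambda_n t^{\alpha+\gamma})$, which is precisely the absolute convergence secured by Theorem~\ref{Cauchy_problem_Gen_streched} through the eigenfunction bounds and Kilbas--Saigo asymptotics recorded in Theorem~\ref{trans_dens_steched_Pearson}. Finally, the initial condition $g_{\alpha,\gamma}^{H}(0,y)=h(y)$ is immediate on the probabilistic side, since $Z^{(\alpha,\gamma)}_0 = 0$ almost surely forces $X_0^{(\alpha,\gamma)}=X_0$, so that $g_{\alpha,\gamma}^{H}(0,y)=\mathbb{E}[h(X_0)\mid X_0=y]=h(y)$.
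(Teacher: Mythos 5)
Your proposal is correct and follows essentially the same route as the paper: both reduce $\mathbb{E}\left[h\left(X_t^{(\alpha,\gamma)}\right)\,\big|\,X_0^{(\alpha,\gamma)}=y\right]$ to the series \eqref{Cauchy_problem_solution_fpd} via the spectral representation, the Laplace-transform identity \eqref{epr}, and a Fubini--Tonelli interchange justified by the absolute convergence guaranteed in Theorem \ref{Cauchy_problem_Gen_streched}, then invoke that theorem to conclude. The only cosmetic difference is that the paper first cites its transition-density theorem (writing $g^{H}_{\alpha,\gamma}(t,y)=\int h(x)\boldsymbol{p}_{\alpha,\gamma}(x,t;y)\,dx$) whereas you condition on $Z^{(\alpha,\gamma)}_t$ directly, which amounts to inlining the proof of that preceding theorem.
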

\begin{proof}
Taking into account last theorem together with \eqref{trans_fpd} we can rewrite the expectation as:
\begin{align}
	g_{\alpha, \gamma}^{H}(t,y)&=\mathbb{E} \left[h\left(X_t^{(\alpha, \gamma)}\right)\, \Big| \, X_0^{(\alpha, \gamma)}=y\right] \nonumber\\
	                           &=\int h(x) \boldsymbol{p}_{\alpha,\gamma}(x,t;y) dx \nonumber\\
	                           &=\int h(x) \boldsymbol{m}(x)\sum_{n=0}^{\infty} \operatorname{E}_{\alpha,1+\gamma/\alpha,\gamma/\alpha}\left(-\lambda_n t^{\alpha+\gamma}\right) Q_n(x)Q_n(y) dx \nonumber\\
	                           &=\sum_{n=0}^{\infty}  \operatorname{E}_{\alpha,1+\gamma/\alpha,\gamma/\alpha}\left(-\lambda_n t^{\alpha+\gamma}\right) Q_n(y)  \left( \int Q_n(x) h(x) \boldsymbol{m}(x) dx \right). \label{Stoch_sol_Gen_form}
\end{align}
In the final equality, the interchange of the order of integration and summation is justified by an application of Fubini’s theorem. Specifically, we consider
\begin{align*}
	\int \sum_{n=0}^{\infty} |h(x)| \boldsymbol{m}(x)& \operatorname{E}_{\alpha,1+\gamma/\alpha,\gamma/\alpha}\left(-\lambda_n t^{\alpha+\gamma}\right) |Q_n(x)| |Q_n(y)| dx \\ 
	  &= 	 \sum_{n=0}^{\infty}\int |h(x)| \boldsymbol{m}(x)  \operatorname{E}_{\alpha,1+\gamma/\alpha,\gamma/\alpha}\left(-\lambda_n t^{\alpha+\gamma}\right) |Q_n(x)| |Q_n(y)| dx \\
	&=\sum_{n=0}^{\infty}  \operatorname{E}_{\alpha,1+\gamma/\alpha,\gamma/\alpha}\left(-\lambda_n t^{\alpha+\gamma}\right) |Q_n(y)|  \left( \int |Q_n(x)| |h(x)| \boldsymbol{m}(x) dx \right) \\
	& <\infty.
\end{align*}
To justify the final step and ensure convergence of the series, observe that by the Cauchy–Schwarz inequality and the fact that the orthogonal polynomials $Q_n$ are normalized in $L^2(\boldsymbol{m}(x)dx)$, we have

\begin{align*}
	\int |Q_n(x)| |h(x)| \boldsymbol{m}(x) dx & \leq \sqrt{\int |Q_n(x)|^2 \boldsymbol{m}(x) dx} \cdot \sqrt{\int |h(x)|^2 \boldsymbol{m}(x) dx} \\
	 & \leq\sqrt{\int |h(x)|^2 \boldsymbol{m}(x) dx} \\
	 & < \infty,	
\end{align*}
since $h \in L^2(\boldsymbol{m}(x)dx)$ by assumption.
Finally, the asymptotic behavior of Hermite, Laguerre and Jacobi polynomials [see \cite{Pearson}, pp. 536–537], together with the bounds for Kilbas-Saigo function provided in \eqref{KS_function_bounds}, ensures the absolute convergence of the series, thereby completing the Fubini argument.

The obtained form \eqref{Stoch_sol_Gen_form} is exactly the solution \eqref{Cauchy_problem_solution_fpd} of fractional Cauchy problem \eqref{Cauchy_problem_fpd} provided in Theorem \ref{Cauchy_problem_Gen_streched}.
\end{proof}

\begin{theorem}
	For any function $h$ that satisfies conditions of Theorem \ref{Cauchy_problem_FP_streched}, the function
	\begin{equation} \label{Stoch_sol_FP}
		p_{\alpha, \gamma}(t,x)=\int \boldsymbol{p}_{\alpha,\gamma}(x,t;y) f(y)dy
	\end{equation}
	solves the fractional Cauchy problem \eqref{Cauchy_problem2_fpd}.
\end{theorem}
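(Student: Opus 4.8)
The plan is to show that the integral expression \eqref{Stoch_sol_FP} coincides with the analytic solution \eqref{Cauchy_problem_solution2_fpd} already established in Theorem \ref{Cauchy_problem_FP_streched}, so that the conclusion follows at once from that theorem (here the initial datum $f$ in \eqref{Stoch_sol_FP} plays the role of $h$). First I would insert the spectral representation \eqref{trans_fpd} of the transition density into \eqref{Stoch_sol_FP}, obtaining
\begin{equation*}
p_{\alpha,\gamma}(t,x)=\int \boldsymbol{m}(x)\sum_{n=0}^\infty \operatorname{E}_{\alpha,1+\gamma/\alpha,\gamma/\alpha}\left(-\lambda_n t^{\alpha+\gamma}\right)Q_n(x)Q_n(y)\,h(y)\,dy ,
\end{equation*}
and then interchange the summation with the $y$-integration so that the integral acts only on $Q_n(y)h(y)$.

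The main technical point is justifying this interchange by a Fubini argument, exactly as in the preceding stochastic-representation theorem but now in the dual $L_2$-setting appropriate to the Fokker--Planck operator $\mathcal{L}$. I would bound, uniformly in $n$,
\begin{equation*}
\int |Q_n(y)|\,|h(y)|\,dy=\int |Q_n(y)|\,\frac{|h(y)|}{\boldsymbol{m}(y)}\,\boldsymbol{m}(y)\,dy\leq \left(\int |Q_n(y)|^2\boldsymbol{m}(y)\,dy\right)^{1/2}\left(\int \frac{|h(y)|^2}{\boldsymbol{m}(y)}\,dy\right)^{1/2},
\end{equation*}
where the first factor equals $1$ by the normalization of the $Q_n$ and the second is finite since $h/\boldsymbol{m}\in L_2(\boldsymbol{m}(x)dx)$ by hypothesis. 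With this uniform bound in hand, absolute summability of the remaining series $\sum_n \operatorname{E}_{\alpha,1+\gamma/\alpha,\gamma/\alpha}(-\lambda_n t^{\alpha+\gamma})|Q_n(x)|$ for fixed $t>0$ and $x\in S$ follows from the upper bound \eqref{KS_function_bounds} on the Kilbas--Saigo function (note $l=\gamma/\alpha=m-1$, so the bound applies directly), which forces decay of order $\lambda_n^{-1}$, together with the growth estimates for the Hermite, Laguerre and Jacobi polynomials recorded in \cite{Pearson} (pp.~536--537). This is the same mechanism that guaranteed convergence of the transition-density series in Theorem \ref{trans_dens_steched_Pearson}, and I expect it to be the only step requiring genuine estimation.

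Once Fubini is justified, I would identify the $y$-integral as the coefficient $a_n=\int_S h(y)Q_n(y)\,dy$ from Theorem \ref{Cauchy_problem_FP_streched}, yielding
\begin{equation*}
p_{\alpha,\gamma}(t,x)=\boldsymbol{m}(x)\sum_{n=0}^\infty \operatorname{E}_{\alpha,1+\gamma/\alpha,\gamma/\alpha}\left(-\lambda_n t^{\alpha+\gamma}\right)Q_n(x)\,a_n ,
\end{equation*}
which is precisely \eqref{Cauchy_problem_solution2_fpd}. Since that series was shown in Theorem \ref{Cauchy_problem_FP_streched} to be a strong solution of the fractional Cauchy problem \eqref{Cauchy_problem2_fpd}, the proof is then complete; as a consistency check the initial condition reads $p_{\alpha,\gamma}(0,x)=\boldsymbol{m}(x)\sum_n a_n Q_n(x)=\boldsymbol{m}(x)\,(h(x)/\boldsymbol{m}(x))=h(x)$, using $\operatorname{E}_{\alpha,1+\gamma/\alpha,\gamma/\alpha}(0)=1$ and the assumed uniform convergence of $\sum_n a_n Q_n$ to $h/\boldsymbol{m}$.
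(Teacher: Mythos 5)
Your proposal is correct and follows essentially the same route as the paper's own proof: insert the spectral representation \eqref{trans_fpd} into the integral \eqref{Stoch_sol_FP}, interchange sum and integration by Fubini, and identify the resulting series with the strong solution \eqref{Cauchy_problem_solution2_fpd} of Theorem \ref{Cauchy_problem_FP_streched}. The only difference is that you write out the Fubini justification explicitly (Cauchy--Schwarz against $h/\boldsymbol{m}\in L_2(\boldsymbol{m}(x)dx)$, the normalization of the $Q_n$, the Kilbas--Saigo bound \eqref{KS_function_bounds} and the polynomial estimates), whereas the paper delegates this step to the analogous argument in the preceding theorem.
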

\begin{proof}
Taking into account series representation of  the transition density \eqref{trans_fpd} with Fubini argument yields
\begin{align*}
	p_{\alpha, \gamma}(t,x)&=\int \boldsymbol{p}_{\alpha,\gamma}(x,t;y) f(y)dy\\
	&=\int f(y) \boldsymbol{m}(x)\sum_{n=0}^{\infty} \operatorname{E}_{\alpha,1+\gamma/\alpha,\gamma/\alpha}\left(-\lambda_n t^{\alpha+\gamma}\right) Q_n(x) Q_n(y) dy \\
	&=\sum_{n=0}^{\infty}  \operatorname{E}_{\alpha,1+\gamma/\alpha,\gamma/\alpha}\left(-\lambda_n t^{\alpha+\gamma}\right) Q_n(x)  \boldsymbol{m}(x)\left( \int Q_n(y) f(y)  dy \right),
\end{align*}
which is exactly the solution \eqref{Cauchy_problem_solution2_fpd} of fractional Cauchy problem \eqref{Cauchy_problem2_fpd} provided in Theorem \ref{Cauchy_problem_FP_streched}. The last equality can be justified in the same manner as in the last theorem, so we skip this part of the proof.
\end{proof}

In order to provide clear stochastic representation of the last theorem we need specific choice for initial function. In particular, if $f$ is initial density function of the process, i.e. probability density function of $X^{(\alpha, \gamma)}_{0}$, then the solution \eqref{Stoch_sol_FP} of the fractional Fokker-Planck Cauchy problem  is in fact
probability density function of $X^{(\alpha, \gamma)}_{t}$. For instance, we can choose $f=\boldsymbol{m}$, but even then the process will not be stationary due to random time change (the process is non-Markovian). 
However, stretched non-local Pearson diffusion have the same limiting distribution as its no-time-change counterpart.

\begin{theorem}
Let $(X_t, \, t \geq 0)$ be Ornstein-Uhlenbeck process, i.e.~diffusion process with infinitesimal mean and variance $$\mu(x)=-\theta (x-\mu), \quad \sigma^2(x)=2\theta \sigma^2,$$
where $\theta>0, \mu \in \mathbb{R}, \sigma^2>0$. Then, the stretched non-local Ornstein-Uhlenbeck process $(X^{(\alpha, \gamma)}_t, \, t \geq 0)$ defined via $\eqref{ed}$ with initial density $f(x)$ satisfying conditions of Theorem \ref{Cauchy_problem_FP_streched}  has density $p_{\alpha, \gamma}(t,x)$ such that
\begin{equation*}
p_{\alpha, \gamma}(t,x) \to \frac{1}{\sqrt{2\pi \sigma^2}}\exp\left(-\frac{(x-\mu)^2}{2\sigma^2}\right), \quad \text{ as } t \to \infty.
\end{equation*}
\end{theorem}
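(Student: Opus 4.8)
The plan is to start from the series representation of the density obtained in the preceding stochastic representation theorem. Combining \eqref{Stoch_sol_FP} with the transition density \eqref{trans_fpd}, the density of $X^{(\alpha,\gamma)}_t$ reads
\begin{equation*}
p_{\alpha,\gamma}(t,x)=\boldsymbol{m}(x)\sum_{n=0}^{\infty}\operatorname{E}_{\alpha,1+\gamma/\alpha,\gamma/\alpha}\left(-\lambda_n t^{\alpha+\gamma}\right)Q_n(x)\,a_n,\qquad a_n=\int Q_n(y)f(y)\,dy,
\end{equation*}
where for the OU process $\lambda_n=\theta n$ and $\boldsymbol{m}$ is the $N(\mu,\sigma^2)$ density. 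First I would isolate the $n=0$ term. Since the normalized constant polynomial is $Q_0\equiv 1$ (because $\int\boldsymbol{m}(x)\,dx=1$), and $\operatorname{E}_{\alpha,1+\gamma/\alpha,\gamma/\alpha}(0)=1$, while $a_0=\int f(y)\,dy=1$ as $f$ is a probability density, this term equals exactly $\boldsymbol{m}(x)$, which is precisely the claimed limiting normal density. The whole problem therefore reduces to showing that the tail $\sum_{n\geq1}$ tends to $0$ as $t\to\infty$.

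The main obstacle is justifying the interchange of the limit $t\to\infty$ with the infinite summation. For every $n\geq1$ we have $\lambda_n=\theta n>0$, hence $\lambda_n t^{\alpha+\gamma}\to\infty$, and the asymptotics \eqref{KS_realz_asympt_spec} (equivalently the upper bound in \eqref{KS_function_bounds}) give $\operatorname{E}_{\alpha,1+\gamma/\alpha,\gamma/\alpha}(-\lambda_n t^{\alpha+\gamma})\to 0$, so each summand of the tail tends to $0$. To transfer this to the sum I would invoke a dominated-convergence argument on the counting measure: fixing any $t_0>0$, the monotonicity of $\operatorname{E}_{\alpha,1+\gamma/\alpha,\gamma/\alpha}(-x)$ in $x\geq 0$ yields, for all $t\geq t_0$ and all $n\geq1$,
\begin{equation*}
0\leq\operatorname{E}_{\alpha,1+\gamma/\alpha,\gamma/\alpha}\left(-\lambda_n t^{\alpha+\gamma}\right)\leq\operatorname{E}_{\alpha,1+\gamma/\alpha,\gamma/\alpha}\left(-\lambda_n t_0^{\alpha+\gamma}\right),
\end{equation*}
so every summand is dominated in absolute value by the $t$-independent quantity $\operatorname{E}_{\alpha,1+\gamma/\alpha,\gamma/\alpha}(-\lambda_n t_0^{\alpha+\gamma})\,|Q_n(x)|\,|a_n|$.

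This dominating series is summable, since Theorem \ref{Cauchy_problem_FP_streched} applied at the fixed time $t_0$ guarantees absolute convergence of the full series (the Hermite bounds of Section \ref{PearsonDiffusions} together with \eqref{KS_function_bounds} supply the necessary estimate for the OU case). As each term converges to $0$ while the dominating series is $t$-independent and summable, dominated convergence forces the tail sum to $0$. Combining this with the $n=0$ contribution gives $p_{\alpha,\gamma}(t,x)\to\boldsymbol{m}(x)$, the $N(\mu,\sigma^2)$ density, as $t\to\infty$, which is exactly the assertion. I expect the only delicate point to be the summability of the dominating series, but this is already guaranteed by the convergence established earlier, so the argument closes.
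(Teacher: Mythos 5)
Your proposal is correct, and its skeleton (split off the $n=0$ term, which equals $\boldsymbol{m}(x)$, then kill the tail by dominated convergence) is the same as the paper's; the difference lies in how you construct the dominating series, and it is a genuinely different mechanism. The paper dominates each summand explicitly: it combines the upper bound in \eqref{KS_function_bounds}, which gives $\operatorname{E}_{\alpha,1+\gamma/\alpha,\gamma/\alpha}(-\lambda_n t^{\alpha+\gamma})\leq \frac{\Gamma(1+\alpha+\gamma)}{\Gamma(1+\gamma)}\,\frac{1}{\lambda_n t^{\alpha+\gamma}}$, with the Hermite bound $|Q_n(x)|\leq K e^{x^2/4}n^{-1/4}(1+|x/\sqrt{2}|^{5/2})$ from \cite{Pearson}, and then proves summability of $\sum_n |a_n| n^{-5/4}$ via Cauchy--Schwarz and Parseval ($\sum_n |a_n|^2=\int|f/\boldsymbol{m}|^2\boldsymbol{m}\,dy<\infty$). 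You instead exploit the monotonicity of $x\mapsto\operatorname{E}_{a,m,m-1}(-x)$ (which applies here precisely because $l=\gamma/\alpha=m-1$, a point worth making explicit) to dominate the tail at all $t\geq t_0$ by its value at a fixed $t_0$, and then delegate summability of the dominating series to the absolute convergence already asserted in Theorem \ref{Cauchy_problem_FP_streched}; positivity of the KS function, needed for your two-sided bound, follows from the lower bound in \eqref{KS_function_bounds}. Your route is softer and shorter --- it avoids re-deriving the polynomial estimates and the Parseval computation, and it sidesteps a small blemish in the paper's display (the Cauchy--Schwarz step there is written without the square roots) --- but it buys less: the paper's explicit domination shows the tail is $\mathcal{O}(t^{-(\alpha+\gamma)})$ uniformly in the dominating constant, i.e.\ it yields a rate of convergence to the stationary density, whereas your argument establishes convergence with no rate. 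Both are valid proofs of the stated theorem as written.
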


\begin{proof}
According to Theorem \ref{Cauchy_problem_FP_streched}, solution of fractional Fokker-Planck Cauchy problem
\begin{equation*}
		 \mathcal{D}^{(\alpha,\gamma)}_t g(t,x)= \mathcal{L} g(t,x)=\frac{\partial}{\partial x}\left(\theta (x-\mu)g(t,x)\right)+\frac{\partial^2}{\partial x^2}\left(\sigma^2 \theta g(t,x)\right), \quad g(0,x)=f(x)
\end{equation*}
is given by the series
\begin{equation*}
\boldsymbol{m}(x)\sum_{n=0}^{\infty} \operatorname{E}_{\alpha,1+\gamma/\alpha,\gamma/\alpha}\left(-\lambda_n t^{\alpha+\gamma}\right)Q_n(x) a_n,
\end{equation*}
where
\begin{align*}
\boldsymbol{m}(x)&=\frac{1}{\sqrt{2\pi \sigma^2}}\exp\left(-\frac{(x-\mu)^2}{2\sigma^2}\right), \quad Q_n(x)=\frac{(-1)^n}{c_n}\frac{\sigma^n}{\sqrt{n!}}\exp\left(\frac{(x-\mu)^2}{2\sigma^2}\right)\frac{d^n}{dx^n}\left(\exp\left(\frac{-(x-\mu)^2}{2\sigma^2}\right)\right), \\
\lambda_n&=\theta n, \quad  a_n=\int f(y) Q_n(y) dy.
\end{align*}
On the other hand, since $f(x)$ is initial density of $X^{(\alpha, \gamma)}_0$,
the density $p_{\alpha, \gamma}(t,x)$ of $X^{(\alpha, \gamma)}_t$ is exactly the given series, i.e. 
\begin{align}
	p_{\alpha, \gamma}(t,x)&=\boldsymbol{m}(x)\sum_{n=0}^{\infty} \operatorname{E}_{\alpha,1+\gamma/\alpha,\gamma/\alpha}\left(-\lambda_n t^{\alpha+\gamma}\right)Q_n(x) a_n  \nonumber\\
	&=\boldsymbol{m}(x)+\sum_{n=1}^{\infty}\boldsymbol{m}(x) \operatorname{E}_{\alpha,1+\gamma/\alpha,\gamma/\alpha}\left(-\lambda_n t^{\alpha+\gamma}\right)Q_n(x) a_n. \label{density_streched_OU}
\end{align}
Second equality follows from the fact that $\lambda_0=0, \,Q_0(x)=1$ and
$$ \operatorname{E}_{\alpha,1+\gamma/\alpha,\gamma/\alpha} \left(-\lambda_0 t^{\alpha+\gamma}\right)=1, \quad a_0=\int f(y)Q_0(y)dy=\int f(y)dy=1.$$
Next, let $t \to \infty$ in \eqref{density_streched_OU} so that
\begin{align*}
	\lim_{t \to \infty}p_{\alpha, \gamma}(t,x)&=\boldsymbol{m}(x)+\lim_{t \to \infty}\sum_{n=1}^{\infty}\boldsymbol{m}(x) \operatorname{E}_{\alpha,1+\gamma/\alpha,\gamma/\alpha}\left(-\lambda_n t^{\alpha+\gamma}\right)Q_n(x) a_n \\
	&=\boldsymbol{m}(x)+\sum_{n=1}^{\infty}\boldsymbol{m}(x) \left(\lim_{t \to \infty}\operatorname{E}_{\alpha,1+\gamma/\alpha,\gamma/\alpha}\left(-\lambda_n t^{\alpha+\gamma}\right)\right)Q_n(x) a_n \\
	&=\boldsymbol{m}(x), 
\end{align*}
where second equality follows from dominated convergence argument, while last equality follows from Kilbas-Saigo function asymptotics (see \eqref{KS_realz_asympt}). To justify the use of dominated convergence theorem, observe that from \eqref{KS_function_bounds} and [\cite{Pearson}, (3.9)] we have 
\begin{align*}
\big | \operatorname{E}_{\alpha,1+\gamma/\alpha,\gamma/\alpha}\left(-\lambda_n t^{\alpha+\gamma}\right)Q_n(x) a_n  \big | &\leq \frac{\Gamma(1+\alpha+\gamma)}{\Gamma(1+\gamma)}\cdot \frac{1}{\lambda_n \cdot t^{\alpha+\gamma}}\cdot K \exp(x^2/4)n^{-1/4} \left(1+|x/\sqrt{2}|^{5/2}\right) \cdot |a_n| \\
& \leq \frac{K}{\theta} \cdot \frac{\Gamma(1+\alpha+\gamma)}{\Gamma(1+\gamma)} \cdot \exp(x^2/4)\left(1+|x/\sqrt{2}|^{5/2}\right) \cdot \frac{|a_n|}{n^{5/4}} \cdot \frac{1}{t^{\alpha+\gamma}}.
\end{align*}
Notice that
\begin{equation*}
\sum_{n=1}^{\infty} \frac{|a_n|}{n^{5/4}} \leq \sum_{n=0}^{\infty} |a_n|^2 \cdot \sum_{n=1}^{\infty} \frac{1}{n^{5/2}} < \infty
\end{equation*}
since by Parseval's theorem 
\begin{equation*}
	\sum_{n=0}^{\infty} |a_n|^2=\int |f(y)/\boldsymbol{m}(y)|^2 \boldsymbol{m}(y) dy < \infty.
\end{equation*}
Therefore, dominated convergence argument is valid and the proof is finished.
\end{proof}

\begin{theorem}
	Let $(X_t, \, t \geq 0)$ be Cox-Ingersoll-Ross process, i.e.~diffusion process with infinitesimal mean and variance $$\mu(x)=-\theta \left(x-\frac{b}{a}\right), \quad \sigma^2(x)= \frac{2 \theta x}{a},$$
	where $\theta>0, a>0$ and $b>0$. Then, the stretched non-local Cox-Ingersoll-Ross process $(X^{(\alpha, \gamma)}_t, \, t \geq 0)$ defined via $\eqref{ed}$ with initial density $f(x)$ satisfying conditions of Theorem \ref{Cauchy_problem_FP_streched}  has density $p_{\alpha, \gamma}(t,x)$ such that
	\begin{equation*}
		p_{\alpha, \gamma}(t,x) \to \frac{a^b}{\Gamma(b)}x^{b-1}\exp(-ax), \quad \text{ as } t \to \infty.
	\end{equation*}
\end{theorem}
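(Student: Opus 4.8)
The plan is to follow the blueprint of the preceding Ornstein–Uhlenbeck theorem essentially verbatim, substituting the CIR-specific spectral data. First I would invoke Theorem \ref{Cauchy_problem_FP_streched} to write the density of $X_t^{(\alpha,\gamma)}$ as
\begin{equation*}
p_{\alpha,\gamma}(t,x)=\boldsymbol{m}(x)\sum_{n=0}^\infty \operatorname{E}_{\alpha,1+\gamma/\alpha,\gamma/\alpha}\left(-\lambda_n t^{\alpha+\gamma}\right)Q_n(x)\,a_n,
\end{equation*}
where now $\boldsymbol{m}(x)=\frac{a^b}{\Gamma(b)}x^{b-1}e^{-ax}$ is the gamma stationary density, $\lambda_n=\theta n$, the $Q_n$ are the normalized Laguerre polynomials $L_n^{b-1}(ax)$, and $a_n=\int f(y)Q_n(y)\,dy$. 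I would then split off the $n=0$ term: since $\lambda_0=0$, $Q_0\equiv 1$, and $a_0=\int f(y)\,dy=1$ (as $f$ is a probability density), the zeroth term is exactly $\boldsymbol{m}(x)$, so that the remaining sum runs from $n=1$.

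Next I would pass to the limit $t\to\infty$ and aim to interchange limit and summation. By the real-argument asymptotics \eqref{KS_realz_asympt}, each surviving summand satisfies $\operatorname{E}_{\alpha,1+\gamma/\alpha,\gamma/\alpha}(-\lambda_n t^{\alpha+\gamma})\to 0$, so once the interchange is justified the tail sum vanishes and $p_{\alpha,\gamma}(t,x)\to\boldsymbol{m}(x)$, which is precisely the claimed gamma limiting density.

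The heart of the argument — and the only place requiring genuine care — is constructing a $t$-independent dominating series so that the dominated convergence theorem applies. Here I would combine the upper bound in \eqref{KS_function_bounds}, noting that the relevant KS function has $l=m-1$ with $a=\alpha$ and $m=1+\gamma/\alpha$, whence $a(m-1)=\gamma$ and $am=\alpha+\gamma$, giving
\begin{equation*}
\operatorname{E}_{\alpha,1+\gamma/\alpha,\gamma/\alpha}\left(-\lambda_n t^{\alpha+\gamma}\right)\le \frac{\Gamma(1+\alpha+\gamma)}{\Gamma(1+\gamma)}\cdot\frac{1}{\theta n\, t^{\alpha+\gamma}},
\end{equation*}
with the CIR eigenfunction bound $Q_n(x)=\mathcal{O}\!\left(\frac{e^{x/2}}{x^{(2b-1)/4}}\,n^{-1/4}\right)$ recorded in Section \ref{sec_fhpd} from [\cite{Pearson}, pp.~536--537]. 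Multiplying these yields, for $t\ge 1$, a term bound of the shape $C(x)\,t^{-(\alpha+\gamma)}\,|a_n|\,n^{-5/4}$; majorizing $t^{-(\alpha+\gamma)}\le 1$ removes the $t$-dependence. Summability of the resulting majorant then follows from Cauchy–Schwarz, $\sum_{n\ge1} |a_n|\, n^{-5/4}\le \big(\sum_n |a_n|^2\big)^{1/2}\big(\sum_n n^{-5/2}\big)^{1/2}$, where $\sum_n |a_n|^2<\infty$ by Parseval's identity, since $f/\boldsymbol{m}\in L_2(\boldsymbol{m}\,dx)$ by hypothesis. With this summable $t$-independent majorant in hand, dominated convergence licenses the term-by-term passage to the limit, completing the proof. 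The main obstacle is thus bookkeeping: verifying that the product of the KS upper bound and the Laguerre asymptotic decays fast enough in $n$ to be dominated uniformly in $t$, exactly as in the OU case but with the gamma-weight exponent $(2b-1)/4$ in place of the Gaussian factor.
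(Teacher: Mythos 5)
Your proposal is correct and follows essentially the same route as the paper: the series representation from Theorem \ref{Cauchy_problem_FP_streched}, isolation of the $n=0$ term giving $\boldsymbol{m}(x)$, and dominated convergence via the Kilbas--Saigo bound \eqref{KS_function_bounds} combined with a Laguerre eigenfunction bound, Cauchy--Schwarz, and Parseval (the paper cites the bound from [Sansone, p.~348] and otherwise declares the argument analogous to the OU case). Your explicit bookkeeping of the exponents $a(m-1)=\gamma$, $am=\alpha+\gamma$ and your correctly square-rooted Cauchy--Schwarz step match (indeed slightly clean up) the paper's OU computation.
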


\begin{proof}
	According to Theorem \ref{Cauchy_problem_FP_streched}, solution of fractional Fokker-Planck Cauchy problem
	\begin{equation*}
		\mathcal{D}^{(\alpha,\gamma)}_t g(t,x)= \mathcal{L} g(t,x)=\frac{\partial}{\partial x}\left(\theta \left(x-\frac{b}{a}\right)g(t,x)\right)+\frac{\partial^2}{\partial x^2}\left( \theta \frac{x}{a} g(t,x)\right), \quad g(0,x)=f(x)
	\end{equation*}
	is given by the series
	\begin{equation*}
		\boldsymbol{m}(x)\sum_{n=0}^{\infty} \operatorname{E}_{\alpha,1+\gamma/\alpha,\gamma/\alpha}\left(-\lambda_n t^{\alpha+\gamma}\right)L^{b-1}_n(x) a_n,
	\end{equation*}
	where
	\begin{equation*}
		\boldsymbol{m}(x)=\frac{a^b}{\Gamma(b)}x^{b-1}\exp(-ax), \, L^{b-1}_n(x) = \frac{x^{-b+1}\operatorname{e}^x}{c_n \cdot n!} \frac{d^n\;}{dx^n}\left( 
		x^{b+n-1}\operatorname{e}^{-x}\right), \, \lambda_n=\theta n, \, a_n=\int f(y) L^{b-1}_n(y) dy.
	\end{equation*}
	On the other hand, since $f(x)$ is initial density of $X^{(\alpha, \gamma)}_0$,
	the density $p_{\alpha, \gamma}(t,x)$ of $X^{(\alpha, \gamma)}_t$ is exactly the given series, i.e. 
	\begin{equation*}
		p_{\alpha, \gamma}(t,x)=\boldsymbol{m}(x)\sum_{n=0}^{\infty} \operatorname{E}_{\alpha,1+\gamma/\alpha,\gamma/\alpha}\left(-\lambda_n t^{\alpha+\gamma}\right)L^{b-1}_n(x) a_n.
	\end{equation*}
	Taking into account Laguerre polynomial bound (see [\cite{Sansone}, p. 348] ) the rest of the proof is analogous to the last theorem.
\end{proof}

\begin{theorem}
	Let $(X_t, \, t \geq 0)$ be Jacobi process, i.e.~diffusion process with infinitesimal mean and variance $$\mu(x)=-\theta \left(x-\frac{b-a}{a+b+2}\right), \quad \sigma^2(x)= \frac{2\theta}{a+b+2}\left(1-x^2\right),$$
	where $\theta>0$, $a>0$ and $b>0$. Then, the stretched non-local Jacobi process $(X^{(\alpha, \gamma)}_t, \, t \geq 0)$ defined via $\eqref{ed}$ with initial density $f(x)$ satisfying conditions of Theorem \ref{Cauchy_problem_FP_streched}  has density $p_{\alpha, \gamma}(t,x)$ such that
	\begin{equation*}
		p_{\alpha, \gamma}(t,x) \to \frac{(1-x)^a(1+x)^b}{B(a+1,b+1) 2^{a+b+1}}, \quad \text{ as } t \to \infty.
	\end{equation*}
\end{theorem}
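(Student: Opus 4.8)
The plan is to mirror the argument used for the Ornstein--Uhlenbeck case, adapting only the decay estimates to the Jacobi setting. First I would invoke Theorem~\ref{Cauchy_problem_FP_streched} to express the density of $X^{(\alpha,\gamma)}_t$ as
\begin{equation*}
p_{\alpha,\gamma}(t,x)=\boldsymbol{m}(x)\sum_{n=0}^{\infty}\operatorname{E}_{\alpha,1+\gamma/\alpha,\gamma/\alpha}\left(-\lambda_n t^{\alpha+\gamma}\right)Q_n(x)a_n,
\end{equation*}
where $\boldsymbol{m}$ is the beta density, $Q_n=P_n^{(a,b)}$ are the normalized Jacobi polynomials, $\lambda_n=\theta n(n+a+b+1)/(a+b+2)$, and $a_n=\int f(y)Q_n(y)\,dy$. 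Since $\lambda_0=0$, $Q_0\equiv1$, and $a_0=\int f(y)\,dy=1$, the $n=0$ term is exactly $\boldsymbol{m}(x)$, so I would split it off and write $p_{\alpha,\gamma}(t,x)=\boldsymbol{m}(x)+\boldsymbol{m}(x)\sum_{n=1}^{\infty}\operatorname{E}_{\alpha,1+\gamma/\alpha,\gamma/\alpha}(-\lambda_n t^{\alpha+\gamma})Q_n(x)a_n$.

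Next I would let $t\to\infty$ and push the limit inside the sum. For each fixed $n\geq1$ the asymptotic formula \eqref{KS_realz_asympt} gives $\operatorname{E}_{\alpha,1+\gamma/\alpha,\gamma/\alpha}(-\lambda_n t^{\alpha+\gamma})\to0$, so the tail series vanishes and only $\boldsymbol{m}(x)$ survives, which is precisely the claimed beta limiting density. The only genuine work is justifying the interchange of limit and summation by dominated convergence. Here I would bound the Kilbas--Saigo function using \eqref{KS_function_bounds}, yielding an estimate of the form $|\operatorname{E}_{\alpha,1+\gamma/\alpha,\gamma/\alpha}(-\lambda_n t^{\alpha+\gamma})|\leq C/(\lambda_n t^{\alpha+\gamma})$ valid for $t\geq1$, and combine it with the Jacobi polynomial asymptotics $Q_n(x)=C(x,a,b)\cos(N\theta+c)+\mathcal{O}(n^{-1})$ recorded in the convergence theorem, which are uniformly bounded in $n$ for each fixed interior point $x\in(-1,1)$.

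Putting these together, each summand is dominated by a constant depending only on $x$ times $|a_n|/\lambda_n$, independently of $t\geq1$. Since $\lambda_n$ grows quadratically in $n$ for the Jacobi process, the Cauchy--Schwarz inequality together with Parseval's identity $\sum_n|a_n|^2=\int|f/\boldsymbol{m}|^2\boldsymbol{m}\,dy<\infty$ gives $\sum_{n\geq1}|a_n|/\lambda_n\leq(\sum_n|a_n|^2)^{1/2}(\sum_n\lambda_n^{-2})^{1/2}<\infty$, furnishing a summable, $t$-independent dominating sequence. I expect the main obstacle to be essentially bookkeeping: checking that the constant $C(x,a,b)$ in the Jacobi bound remains finite at the fixed interior point $x$ and packaging the Kilbas--Saigo estimate uniformly over $t\geq1$. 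In fact the faster quadratic growth of $\lambda_n$ makes the summability here easier than in the Ornstein--Uhlenbeck case. With the dominating sequence in hand, dominated convergence applies and the proof concludes exactly as before.
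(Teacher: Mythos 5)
Your proposal is correct and follows essentially the same route as the paper: the paper's proof of the Jacobi case simply invokes Theorem~\ref{Cauchy_problem_FP_streched}, splits off the $n=0$ term, and declares the rest "analogous to the OU case" using the Jacobi polynomial bound from [\cite{Pearson}, (3.10)], which is exactly the dominated-convergence argument you spell out (Kilbas--Saigo bound \eqref{KS_function_bounds} giving $C/(\lambda_n t^{\alpha+\gamma})$, boundedness of $Q_n(x)$ in $n$ at fixed interior $x$, Cauchy--Schwarz with Parseval). Your explicit restriction to $t\geq 1$ for the $t$-independent dominating sequence and your observation that the quadratic growth of $\lambda_n$ makes summability easier than in the OU case are both accurate refinements of the paper's sketch.
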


\begin{proof}
	According to Theorem \ref{Cauchy_problem_FP_streched}, solution of fractional Fokker-Planck Cauchy problem
	\begin{equation*}
		\mathcal{D}^{(\alpha,\gamma)}_t g(t,x)= \mathcal{L} g(t,x)=\frac{\partial}{\partial x}\left(\theta \left(x-\frac{b-a}{a+b+2}\right)g(t,x)\right)+\frac{\partial^2}{\partial x^2}\left( \frac{\theta \left(1-x^2\right)}{a+b+2} g(t,x)\right), \quad g(0,x)=f(x)
	\end{equation*}
	is given by the series
	\begin{equation*}
		\boldsymbol{m}(x)\sum_{n=0}^{\infty} \operatorname{E}_{\alpha,1+\gamma/\alpha,\gamma/\alpha}\left(-\lambda_n t^{\alpha+\gamma}\right)P^{(a,b)}_n(x) a_n,
	\end{equation*}
	where
	\begin{align*}
		\boldsymbol{m}(x)&=\frac{a^b}{\Gamma(b)}x^{b-1}\exp(-ax), \quad P^{(a,b)}_n(x) = \frac{(-1)^n (1-x)^{-a}(1+x)^{-b}}{c_n \cdot 2^n \cdot n!} \frac{d^n\;}{dx^n}  
		\left[(1-x)^{a+n}(1+x)^{b+n}\right], \\ \lambda_n &=\theta n, \quad a_n=\int f(y) P^{(a,b)}_n(y) dy.
	\end{align*}
	On the other hand, since $f(x)$ is initial density of $X^{(\alpha, \gamma)}_0$,
	the density $p_{\alpha, \gamma}(t,x)$ of $X^{(\alpha, \gamma)}_t$ is exactly the given series, i.e. 
	\begin{equation*}
		p_{\alpha, \gamma}(t,x)=\boldsymbol{m}(x)\sum_{n=0}^{\infty} \operatorname{E}_{\alpha,1+\gamma/\alpha,\gamma/\alpha}\left(-\lambda_n t^{\alpha+\gamma}\right)P^{(a,b)}_n(x) a_n.
	\end{equation*}
	Taking into account Jacobi polynomial bound (see [\cite{Pearson}, (3.10)] ) the rest of the proof is analogous to the OU case.
\end{proof}

\subsection{Stochastic representation for fractional hyperbolic Pearson diffusion}

Next, we explore the probabilistic connection between fractional Cauchy problems
\eqref{Cauchy_problem}, \eqref{Cauchy_problem2} and their corresponding non-fractional counterparts (i.e.~these Cauchy problems with $\alpha=1, \gamma=0$).

\begin{theorem}
		For any function $h$ that satisfies conditions of Theorem \ref{Cauchy_problem_Gen}, the function
			\begin{equation*}
		g_{\alpha, \gamma}^{H}(t,y)=\mathbb{E} \left[g_{1, 0}^{H}(Z^{(\alpha, \gamma)}_t,y)\right]
	\end{equation*}
	solves the fractional Cauchy problem \eqref{Cauchy_problem}, where $g_{\alpha, \gamma}^{H}(t,y)$ is given by \eqref{Cauchy_problem_solution}.
\end{theorem}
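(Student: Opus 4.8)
The plan is to exploit the subordination structure already isolated in \eqref{epr}: since the Kilbas--Saigo function is the Laplace transform of $Z^{(\alpha,\gamma)}_t = t^{\alpha+\gamma}Z$, averaging the non-fractional temporal modes over the random time $Z^{(\alpha,\gamma)}_t$ should reproduce exactly the fractional modes $T_n(t;\alpha,\gamma)$ of \eqref{temporal_solution}. Recall first that for $\alpha=1$, $\gamma=0$ one has $\operatorname{E}_{1,1,0}(z)=\operatorname{e}^{z}$, so that the non-fractional solution reads
\begin{equation*}
g_{1,0}^{H}(\tau,y)=\sum_{n=0}^\infty T_n(\tau;1,0)\,Q_n(y)\,a_n,\qquad T_n(\tau;1,0)=K_1(n)\operatorname{e}^{a^*_n\tau}+K_2(n)\operatorname{e}^{b^*_n\tau},
\end{equation*}
and that the quantities $a^*_n$, $b^*_n$, $K_1(n)$, $K_2(n)$ depend only on $A$, $B$ and $\lambda_n$, hence coincide with those entering \eqref{temporal_solution}. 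I would then condition on the value $\tau$ of $Z^{(\alpha,\gamma)}_t$, writing $\mathbb{E}[g_{1,0}^{H}(Z^{(\alpha,\gamma)}_t,y)]=\int_0^\infty g_{1,0}^{H}(\tau,y)\,f_t(\tau)\,d\tau$, where $f_t$ denotes the density of $Z^{(\alpha,\gamma)}_t$.

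The clean core of the argument is the per-mode identity. The representation \eqref{epr} derives from the Boudabsa form that holds for \emph{any} complex argument, so applying it at $z=a^*_n$ and $z=b^*_n$ gives
\begin{equation*}
\mathbb{E}\!\left[\operatorname{e}^{a^*_nZ^{(\alpha,\gamma)}_t}\right]=\operatorname{E}_{\alpha,1+\gamma/\alpha,\gamma/\alpha}\!\left(a^*_nt^{\alpha+\gamma}\right),\qquad \mathbb{E}\!\left[\operatorname{e}^{b^*_nZ^{(\alpha,\gamma)}_t}\right]=\operatorname{E}_{\alpha,1+\gamma/\alpha,\gamma/\alpha}\!\left(b^*_nt^{\alpha+\gamma}\right),
\end{equation*}
so that $\mathbb{E}[T_n(Z^{(\alpha,\gamma)}_t;1,0)]=T_n(t;\alpha,\gamma)$, precisely the fractional temporal mode. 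If the expectation may be exchanged with the sum over $n$, the representation collapses to $\sum_{n=0}^\infty T_n(t;\alpha,\gamma)Q_n(y)a_n=g_{\alpha,\gamma}^{H}(t,y)$, which is the series \eqref{Cauchy_problem_solution} and therefore a strong solution of \eqref{Cauchy_problem} by Theorem~\ref{Cauchy_problem_Gen}; the initial datum is recovered from $Z^{(\alpha,\gamma)}_0\overset{a.s.}{=}0$ together with $g_{1,0}^{H}(0,y)=h(y)$.

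The main obstacle is justifying this interchange, and it is genuinely more delicate than in the first-order case. Because $\operatorname{Re}(a^*_n)=\operatorname{Re}(b^*_n)=-B/(2A)\le 0$ (strictly negative once $n$ is large enough for $a^*_n$, $b^*_n$ to become complex conjugates), the modulus bound $|T_n(\tau;1,0)|\le C\operatorname{e}^{-B\tau/(2A)}$ holds uniformly in $n$; however, unlike the Mittag--Leffler factor in the first-order problem, this bound carries \emph{no decay in $n$}. The $\lambda_n^{-1}$ decay that renders $g_{\alpha,\gamma}^{H}$ absolutely convergent only emerges \emph{after} integrating against the law of $Z^{(\alpha,\gamma)}_t$, through Theorem~\ref{KS_asymtptocis_complex_z}, so a term-by-term absolute Fubini estimate is unavailable.

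I would therefore pass to the limit through the partial sums $S_N(\tau,y)=\sum_{n=0}^N T_n(\tau;1,0)Q_n(y)a_n$: by linearity and the per-mode identity, $\mathbb{E}[S_N(Z^{(\alpha,\gamma)}_t,y)]=\sum_{n=0}^N T_n(t;\alpha,\gamma)Q_n(y)a_n$, whose limit is $g_{\alpha,\gamma}^{H}(t,y)$ by Theorem~\ref{Cauchy_problem_Gen}, and then invoke dominated convergence in the $\tau$-integral to identify $\lim_N\mathbb{E}[S_N(Z^{(\alpha,\gamma)}_t,y)]$ with $\mathbb{E}[g_{1,0}^{H}(Z^{(\alpha,\gamma)}_t,y)]$. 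A dominating function can be assembled from the exponential-in-$\tau$ bound above together with the polynomial growth estimates of the Hermite, Laguerre and Jacobi families and the square-summability of $(a_n)$ recorded in the proof of Theorem~\ref{Cauchy_problem_Gen}; controlling $\sup_N|S_N(\tau,y)|$ uniformly in $N$ is the one step requiring care, and it is precisely there that the oscillatory cancellation among the conjugate modes $a^*_n,b^*_n$ must be exploited rather than discarded.
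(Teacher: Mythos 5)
Your proposal follows essentially the same route as the paper's proof: you apply the stochastic representation \eqref{epr} at the complex arguments $a^*_n$ and $b^*_n$ to obtain the per-mode identity $\mathbb{E}\left[T_n(Z^{(\alpha,\gamma)}_t;1,0)\right]=T_n(t;\alpha,\gamma)$, and then exchange expectation with the summation over $n$ so that the representation collapses to the series \eqref{Cauchy_problem_solution}, which solves \eqref{Cauchy_problem} by Theorem~\ref{Cauchy_problem_Gen}. Your extra scrutiny of the interchange step is warranted rather than a deviation --- the paper dispatches it with a ``Fubini argument similar to the previous theorems'' even though, as you correctly observe, the uniform bound $|T_n(\tau;1,0)|\leq C\operatorname{e}^{-B\tau/(2A)}$ carries no decay in $n$ (the $\lambda_n^{-1}$ decay only appears after integration, via Theorem~\ref{KS_asymtptocis_complex_z}) --- so your partial-sum and dominated-convergence sketch is, if anything, more careful than the published argument.
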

\begin{proof}
First, let us recall the temporal solution (see \eqref{temporal_solution})
\begin{equation*}
T_n(t; \alpha, \gamma)= K_1(n) \, \operatorname{E}_{\alpha,1+\gamma/\alpha,\gamma/\alpha}\left(a^{*}_n t^{\alpha+\gamma}\right) + 
K_2(n) \, \operatorname{E}_{\alpha,1+\gamma/\alpha,\gamma/\alpha}\left(b^*_n t^{\alpha+\gamma}\right)
\end{equation*}
of 
\begin{equation*}
 A \left(\operatorname{\mathcal{D}}^{(\alpha,\gamma)}_t\right)^2 T(t)  + B 
\left(\operatorname{\mathcal{D}}^{(\alpha,\gamma)}_t\right) T(t) =- \lambda T(t). 
\end{equation*}
On the other hand, taking into account stochastic representation of Kilbas-Saigo function (see \eqref{epr}) we have
\begin{align*}
	T_n(t; \alpha, \gamma)&= K_1(n) \, \mathbb{E}\exp\left(a^*_n Z^{(\alpha,\gamma)}_t\right) + 
	K_2(n) \, \mathbb{E}\exp\left(b^*_n Z^{(\alpha,\gamma)}_t\right) \\
	                      &=\mathbb{E}\left[K_1(n)\exp\left(a^*_n Z^{(\alpha,\gamma)}_t\right)+K_2(n)\exp\left(b^*_n Z^{(\alpha,\gamma)}_t\right)\right] \\
	                      &=\mathbb{E}\left[T_n(Z^{(\alpha,\gamma)}_t, 1, 0)\right].
\end{align*}
To see this, let $\alpha=1$ and $\gamma=0$ so that $Z^{(\alpha,\gamma)}_t=t$ a.s.~and
\begin{equation*}
T_n(t,1,0)=\mathbb{E}\left[K_1(n)\exp(a^*_n t)+K_2(n)\exp(b^*_n t)\right].
\end{equation*}
which is temporal solution of
\begin{equation*}
	A \cdot  \frac{d^2}{dt^2}T(t)  + B \cdot
	 \frac{d}{dt}T(t) =- \lambda T(t). 
\end{equation*}
Now, solution of fractional Cauchy problem \eqref{Cauchy_problem}, i.e.
\begin{equation*} 
	A\cdot \left(\mathcal{D}^{(\alpha,\gamma)}_t\right)^2 g(t,y)+B \cdot \mathcal{D}^{(\alpha,\gamma)}_t g(t,y)= \mathcal{G} g(t,y), \quad g(0,y)=h(y) 
\end{equation*}
can be written as
\begin{align*}
	g_{\alpha, \gamma}^{H}(t,y)&=\sum_{n=0}^{\infty} T_n(t; \alpha, \gamma)Q_n(y) a_n. \\
	&=\sum_{n=0}^{\infty} \mathbb{E}\left[T_n(Z^{(\alpha,\gamma)}_t, 1, 0)\right]Q_n(y) a_n \\
	&=\mathbb{E}\left[\sum_{n=0}^{\infty} T_n(Z^{(\alpha,\gamma)}_t, 1, 0) Q_n(y) a_n\right] \\
	&=\mathbb{E} \left[g_{1, 0}^{H}(Z^{(\alpha, \gamma)}_t,y)\right],
\end{align*}
where the third line is guaranteed by Fubini argument similar to the previous theorems. In the last equality $g_{1, 0}^{H}(t,y)$ is the solution of the non-fractional Cauchy problem
\begin{equation*} 
	A\cdot \frac{d^2}{dt^2} g(t,y)+B \cdot \frac{d}{dt} g(t,y)= \mathcal{G} g(t,y), \quad g(0,y)=h(y). 
\end{equation*}
Therefore, the proof is finished.
\end{proof}
\begin{theorem}
	For any function $h$ that satisfies conditions of Theorem \ref{Cauchy_problem_FP}, the function
	\begin{equation*}
		g_{\alpha, \gamma}^{H}(t,y)=\mathbb{E} \left[g_{1, 0}^{H}(Z^{(\alpha, \gamma)}_t,y)\right]
	\end{equation*}
	solves the fractional Cauchy problem \eqref{Cauchy_problem2}, where $g_{\alpha, \gamma}^{H}(t,y)$ is given by \eqref{Cauchy_problem_solution2}.
\end{theorem}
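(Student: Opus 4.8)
The plan is to mirror the proof of the preceding theorem almost verbatim, the only structural change being the presence of the stationary-density weight $\boldsymbol{m}(x)$ and the replacement of the generator $\mathcal{G}$ by the Fokker--Planck operator $\mathcal{L}$. First I would recall the stochastic representation of the temporal factor established in the previous proof, namely
\begin{equation*}
T_n(t;\alpha,\gamma)=\mathbb{E}\left[T_n(Z^{(\alpha,\gamma)}_t,1,0)\right],
\end{equation*}
which follows by applying the Laplace-transform identity \eqref{epr} to each of the two Kilbas--Saigo terms appearing in \eqref{temporal_solution} (with arguments $a^*_n$ and $b^*_n$) and using that $K_1(n)$ and $K_2(n)$ are deterministic constants. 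Here $T_n(\cdot,1,0)$ is the temporal solution of the non-fractional telegraph-type equation $A\,T''+B\,T'=-\lambda_n T$, recovered by setting $\alpha=1$ and $\gamma=0$ so that $Z^{(\alpha,\gamma)}_t=t$ almost surely.

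Next I would substitute this representation into the analytic solution \eqref{Cauchy_problem_solution2} furnished by Theorem \ref{Cauchy_problem_FP} and interchange expectation with summation:
\begin{align*}
g_{\alpha,\gamma}^{H}(t,x)
&=\boldsymbol{m}(x)\sum_{n=0}^{\infty}T_n(t;\alpha,\gamma)\,Q_n(x)\,a_n \\
&=\boldsymbol{m}(x)\sum_{n=0}^{\infty}\mathbb{E}\left[T_n(Z^{(\alpha,\gamma)}_t,1,0)\right]Q_n(x)\,a_n \\
&=\mathbb{E}\left[\boldsymbol{m}(x)\sum_{n=0}^{\infty}T_n(Z^{(\alpha,\gamma)}_t,1,0)\,Q_n(x)\,a_n\right] \\
&=\mathbb{E}\left[g_{1,0}^{H}(Z^{(\alpha,\gamma)}_t,x)\right],
\end{align*}
where $g_{1,0}^{H}(t,x)$ denotes the solution of the non-fractional Fokker--Planck Cauchy problem
\begin{equation*}
A\,\frac{d^2}{dt^2}g(t,x)+B\,\frac{d}{dt}g(t,x)=\mathcal{L}g(t,x),\qquad g(0,x)=h(x),
\end{equation*}
built from the $\boldsymbol{m}(x)$-weighted eigenfunctions exactly as in \eqref{Cauchy_problem_solution2}.

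The single nontrivial point, and the step I expect to be the main obstacle, is the Fubini interchange on the third line. I would justify it exactly as in Theorem \ref{Cauchy_problem_FP}: since $Z^{(\alpha,\gamma)}_t\geq 0$ almost surely, the asymptotics \eqref{Hyper_temporal_asym} yield a uniform $\mathcal{O}(\lambda_n^{-1})$ decay for $T_n(Z^{(\alpha,\gamma)}_t,1,0)$, and this combines with the polynomial growth bounds for the Hermite, Laguerre and Jacobi families recalled in that theorem together with the Parseval estimate $\sum_n|a_n|^2=\int|h/\boldsymbol{m}|^2\,\boldsymbol{m}\,dx<\infty$ to furnish an integrable dominating series. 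Since all of these estimates are already in place, the interchange is legitimate, and the identification of $g_{\alpha,\gamma}^{H}(t,x)$ with $\mathbb{E}[g_{1,0}^{H}(Z^{(\alpha,\gamma)}_t,x)]$ follows, completing the argument.
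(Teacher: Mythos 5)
Your proposal is correct and follows exactly the route the paper intends: the paper's own proof of this theorem is omitted with a pointer to the preceding theorem, and your argument—representing $T_n(t;\alpha,\gamma)=\mathbb{E}\left[T_n(Z^{(\alpha,\gamma)}_t,1,0)\right]$ via \eqref{epr}, substituting into \eqref{Cauchy_problem_solution2}, and interchanging expectation and summation by the same Fubini estimates—is precisely that proof with the weight $\boldsymbol{m}(x)$ and the operator $\mathcal{L}$ in place of $\mathcal{G}$. Nothing in your write-up deviates from or weakens the paper's (implicit) argument.
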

\begin{proof}
The proof follows similar lines as the previous theorem and is therefore omitted.
\end{proof}

\begin{remark}
Results such as last two theorems are typical for Cauchy problems involving Caputo fractional derivative, where inverse of subordinators are the main tool for connecting analytical solutions with their stochastic counterparts. However, given that our study employs a modified form of the Caputo fractional derivative, these results warrant further attention. Moreover, our approach can be seen as a generalization since $\gamma=0$ in $	\operatorname{\mathcal{D}}^{(\alpha,\gamma)}_t$ leads to recovering Caputo fractional derivative. An intriguing open question is whether well-established results related to fractional Cauchy problems—such as the subordination principle (\cite{Bajlekova}) and the time-changed semigroup property (\cite{BaeumerMeerschaert})—remain valid in our modified framework.
\end{remark}

\begin{remark}
To the best of our knowledge, the non-fractional Cauchy problem 
\begin{equation*} 
	A\cdot \frac{d^2}{dt^2} g(t,y)+B \cdot \frac{d}{dt} g(t,y)= \mathcal{G} g(t,y), \quad g(0,y)=h(y)
\end{equation*}
does not admit a direct stochastic representation. This limits the interpretability of the last two theorems in contrast to the case of stretched non-local Pearson diffusions.
\end{remark}
\appendix
\section{Double gamma function} \label{Appen}

\renewcommand{\theequation}{A.\arabic{equation}}
\setcounter{equation}{0}

The double gamma function, denoted by $G(z;\tau)$, can be defined as \cite{Kuznetsov}
\begin{equation}
	\label{ap.B.G.tau.2}
	G(z;\tau) = \frac{1}{\tau\Gamma(z)} \operatorname{e}^{\left[\tilde{a}(\tau)\frac{z}{\tau} + \tilde{b}(\tau) 
		\frac{z^2}{2\tau^2}\right]} \prod_{m=1}^\infty \frac{\Gamma(m\tau)}{\Gamma(z+m\tau)} 
	\operatorname{e}^{\left[z\psi(m\tau)+ \frac{z^2}{2}\psi^\prime(m\tau) \right]} ,
\end{equation}
where $z \in \mathbb{C}$, $\tau \in \mathbb{C}\setminus (-\infty,0]$ and 
\begin{equation*}
	\tilde{a}(\tau) = a(\tau) - \gamma\tau , \qquad \tilde{b}(\tau) = b(\tau) + \frac{\pi^2 \tau^2}{6} ,
\end{equation*}
with $a(\tau)$ and $b(\tau)$ given by 
\begin{equation*}
	\begin{split}
		& a(\tau) = \gamma \tau + \frac{\tau}{2}\log{(2\pi \tau)} + \frac{1}{2}\log\tau - \tau C(\tau) , \\
		& b(\tau) = - \frac{\pi^2 \tau^2}{6} - \tau \log\tau - \tau^2 D(\tau) , 
	\end{split}
\end{equation*}
and 
\begin{equation*}
	\begin{split}
		& C(\tau) = \lim_{m\to \infty} \left[\sum_{k=1}^{m-1}\psi(k\tau) + \frac{1}{2}\psi(m\tau) - \frac{1}{\tau} 
		\log\left(\frac{\Gamma(m\tau)}{\sqrt{2\pi}}\right) \right] , \\
		& D(\tau) = \lim_{m\to \infty} \left[\sum_{k=1}^{m-1}\psi^\prime(k\tau) + \frac{1}{2}\psi^\prime(m\tau) - \frac{1}{\tau}\psi(m\tau) \right] . 
	\end{split}
\end{equation*}
Note that $G(z;\tau)$ is an entire function with zeros located at $-(\mu\tau + \lambda)$ with $\lambda,\mu= 0,1,2,\ldots$.

The double gamma function is such that \cite{Genesis} 
\begin{equation*}
	\label{ap.B.G.tau.(1)}
	G(1;\tau) = 1 . 
\end{equation*}
It satisfies the functional relations 
\begin{equation}
	\label{ap.B.general.G.tau}
	G(z+1;\tau) = \Gamma(z/\tau) G(z;\tau) 
\end{equation} 
and 
\begin{equation*}
	\label{ap.B.general.G.tau.2}
	G(z+\tau;\tau) = (2\pi)^{\frac{\tau-1}{2}} \tau^{-z+\frac{1}{2}} \Gamma(z) G(z;\tau) .
\end{equation*}
A straightforward consequence of these properties is 
\begin{equation*}
	\label{ap.B.G.tau.cons}
	G(1+\tau;\tau) = G(\tau;\tau) = (2\pi)^{(\tau-1)/2} \tau^{-1/2} . 
\end{equation*}
Moreover, using \eqref{ap.B.general.G.tau} recursively, we obtain
\begin{equation}
	\label{ap.B.general.G.tau.n}
	G(z+k;\tau) = G(z,\tau) \prod_{j=0}^{k-1} \Gamma[(z+j)/\tau] . 
\end{equation}

The Stirling formula for $G(z;\tau)$ is (see \cite{Kuznetsov}) 
\begin{equation}
	\label{ap.B.Stirling.G.tau}
	\log{G(z;\tau)} = \left[a_2(\tau) z^2 + a_1(\tau)z + a_0(\tau)\right] \log{z} + 
	b_2(\tau)z^2 + b_1(\tau) z + b_0(\tau) + \mathcal{O}(z^{-1}) , 
\end{equation}
where 
\begin{equation*}
	\begin{split}
		& a_2(\tau) = \frac{1}{2\tau} , \\
		& a_1(\tau) = -\frac{1}{2}\left(1+\frac{1}{\tau}\right) , \\
		& a_0(\tau) = \frac{\tau}{12} + \frac{1}{4} + \frac{1}{12\tau} , 
	\end{split}
\end{equation*}
and 
\begin{equation*}
	\begin{split}
		& b_2(\tau) = -\frac{1}{2\tau}\left(\frac{3}{2} + \log\tau\right) , \\
		& b_1(\tau) = \frac{1}{2}\left(\left(1+\frac{1}{\tau}\right)(1+\log\tau) + \log{2\pi}\right) , \\
		& b_0(\tau) = \frac{1}{3}\big\{\log\left[G^2(1/2;\tau)G(\tau;2\tau)\right] \\
		& \phantom{b_0(\tau) = \frac{1}{3}\big\{} - 
		\frac{1+\tau}{2}\log{2\pi} - a_0(\tau)\log{(\tau^3/2)} - \log{2}\big\} . 
	\end{split}
\end{equation*}

\bigskip \bigskip

\textbf{Acknowledgements} \newline

Luisa Beghin acknowledges financial support under NRRP, Mission 4, Component 2, Investment 1.1, Call for tender No. 104 published on 2.2.2022 by the Italian MUR, funded by the European Union – NextGenerationEU– Project Title “Non–Markovian Dynamics and Non-local Equations” – 202277N5H9 - CUP: D53D23005670006.

Nikolai Leonenko (NL) would like to thank for support and hospitality during
the programme “Fractional Differential Equations” and the programmes “Uncertainly Quantification and Modelling of Materials” and “Stochastic systems for anomalous diffusion” in Isaac Newton Institute for Mathematical Sciences, Cambridge. The last programme was organized with the support of the Clay Mathematics Institute, of EPSRC (via grants EP/W006227/1 and EP/W00657X/1), of UCL (via the MAPS Visiting Fellowship scheme) and of the Heilbronn Institute for Mathematical Research (for the Sci-Art Contest). Also NL was partially supported under the ARC Discovery Grant DP220101680 (Australia), Croatian Scientific Foundation (HRZZ) grant “Scaling in Stochastic Models” (IP-2022-10-8081), grant FAPESP 22/09201-8 (Brazil) and
the Taith Research Mobility grant (Wales, Cardiff University). Also, NL would like to thank University of Rome “La Sapienza” for hospitality as Visiting Professor (June 2024) where the paper was initiated.

Ivan Papić was partially supported by the Croatian Science Foundation (HRZZ) Grant Scaling in Stochastic Models (IP-2022-10-8081).

Jayme Vaz would like to thank the support of FAPESP (project 24/17510-6), the
Taith Research Mobility grant (Wales, Cardiff University), and Cardiff University for the hospitality during the completion of this paper.

\newpage

\bibliographystyle{plain}
\bibliography{Streched_non_local_Pearson_diffusions}

\end{document}